\newtheorem{theorem}{Theorem}[section]
\newtheorem{corollary}[theorem]{Corollary}
\newtheorem{lemma}[theorem]{Lemma}
\newtheorem{proposition}[theorem]{Proposition}
\theoremstyle{definition}
\newtheorem{definition}[theorem]{Definition}
\theoremstyle{remark}
\newtheorem{remark}[theorem]{Remark}
\numberwithin{equation}{section}
\newcommand{\dbar}  {\overline{\partial}}
\newcommand{\del}   {\partial}
\newcommand{\eps}   {\varepsilon}
\newcommand{\tensor}{\otimes}
\newcommand{\CC}{\mathbb{C}}
\newcommand{\RR}{\mathbb{R}}
\newcommand{\A}{\mathcal{A}}
\DeclareMathOperator{\Ad}{Ad}
\DeclareMathOperator{\End}{End}
\DeclareMathOperator{\Herm}{Herm}
\DeclareMathOperator{\Hom}{Hom}
\DeclareMathOperator{\id}{id}
\DeclareMathOperator{\rank}{rank}
\DeclareMathOperator{\tr}{tr}
\renewcommand{\leq}{\leqslant}
\renewcommand{\geq}{\geqslant}
\begin{document}

\baselineskip=15pt

\title[Affine Yang--Mills--Higgs metrics]{Affine
Yang--Mills--Higgs metrics}

\author[I.\ Biswas]{Indranil Biswas}

\address{School of Mathematics, Tata Institute of Fundamental
Research, Homi Bhabha Road, Bombay 400005, India}

\email{indranil@math.tifr.res.in}

\author[J.\ Loftin]{John Loftin}

\address{Department of Mathematics and Computer Science,
Rutgers University at Newark, Newark, NJ 07102, USA}

\email{loftin@rutgers.edu}

\author[M.\ Stemmler]{Matthias Stemmler}

\address{School of Mathematics, Tata Institute of Fundamental
Research, Homi Bhabha Road, Bombay 400005, India}

\email{stemmler@math.tifr.res.in}

\subjclass[2000]{53C07, 57N16}

\keywords{Affine manifold, Higgs bundle, Yang--Mills--Higgs metric,
stability}

\date{}

\begin{abstract}
Let $(E \, , \varphi)$ be a flat Higgs bundle on a compact
special affine manifold $M$ equipped with an affine Gauduchon
metric. We prove that $(E \, , \varphi)$ is polystable if and
only if it admits an affine Yang--Mills--Higgs metric.
\end{abstract}

\maketitle

\section{Introduction}

An affine manifold of dimension $n$ is a smooth real manifold $M$
of dimension $n$ equipped with a
flat torsion--free connection $D$ on its tangent bundle.
Equivalently, an affine structure on $M$ is provided by an atlas
of $M$ whose transition functions are affine maps of the form $x
\,\longmapsto\, Ax + b$, where $A\, \in\, \text{GL}(n \, , {\mathbb R})$
and $b\, \in\, {\mathbb R}^n$. The total space of the tangent bundle
$TM$ of an affine manifold $M$ admits a natural complex structure;
for the above transition function on $U\, \subset\,{\mathbb R}^n$, the
corresponding
transition map on $TU\, \subset\,T{\mathbb R}^n$ is $z \longmapsto Az +
b$, where $z \,=\, x + \sqrt{-1} \, y$ with $y$ being the fiber
coordinate for the natural trivialization of the tangent bundle
of $U$. There is a
dictionary between the locally constant sheaves on $M$ and the
holomorphic sheaves on $TM$ which are invariant in the fiber
directions (cf.\ \cite{Lo09}). In particular, a flat
complex vector bundle over $M$ naturally extends to a holomorphic
vector bundle over $TM$.

An affine manifold $M$ is called {\em special\/} if it admits a
volume form which is covariant constant with respect to the flat
connection $D$ on $M$. In \cite{Lo09}, a Donaldson--Uhlenbeck--Yau
type correspondence was established for flat vector bundles
over a compact special affine manifold equipped with an
affine Gauduchon metric. This
correspondence states that such a bundle admits an
affine Yang--Mills
metric if and only if it is polystable. The proof of it is an
adaptation to the affine situation of the methods of
Uhlenbeck--Yau \cite{UY86}, \cite{UY89} for the compact K\"ahler
manifolds and their modification by Li--Yau \cite{LY87} for the complex
Gauduchon case.

Hitchin and Donaldson extended the correspondence between
polystable bundles and Yang--Mills connections
to Higgs bundles on Riemann surfaces
\cite{Hi}, \cite{Do}. Simpson extended it to Higgs bundles on
compact K\"ahler manifolds (also to non--compact cases under some
assumption)
using Donaldson's heat flow technique (see \cite{Si88},
\cite{Do85}, \cite{Do87}). Recently, this has been adapted for
the compact Gauduchon case by Jacob \cite{Ja11}.

Our aim here is
to introduce Higgs fields on flat vector bundles over a compact
special affine manifold equipped with an affine Gauduchon metric,
and to establish a correspondence between polystable Higgs
bundles and Yang--Mills--Higgs connections.

We prove the following theorem (see Theorem \ref{main},
Proposition \ref{uniqueness} and Proposition
\ref{stable-simple}):

\begin{theorem}\label{thm0}
Let $M$ be a compact special affine manifold equipped with an
affine Gauduchon metric. If $(E \, , \varphi)$ is a stable
flat Higgs vector bundle over $M$, then $E$ admits an affine
Yang--Mills--Higgs metric, which is unique up to a positive
constant scalar.
\end{theorem}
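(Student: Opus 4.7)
The plan is to adapt the Donaldson--Uhlenbeck--Yau type argument of Loftin in the affine setting by incorporating the modifications introduced by Simpson (and Jacob in the Gauduchon case) to handle the Higgs field. Using the dictionary between flat objects on $M$ and fiber--invariant holomorphic objects on $TM$, the affine Yang--Mills--Higgs equation for a Hermitian metric $h$ on $E$ takes the form
\[
\Lambda \bigl( F_h + [\varphi \, , \varphi^{*_h}] \bigr) \;=\; \lambda \cdot \id_E ,
\]
where $F_h$ is the curvature of the Chern connection of $(E,h)$ built from the flat $(0,1)$--operator, $\Lambda$ denotes contraction by the affine Gauduchon metric, and the constant $\lambda$ is determined by the degree of $E$ and the volume of $M$.

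I would then follow the continuity--method strategy of Uhlenbeck--Yau, as modified by Li--Yau for the Gauduchon setting and by Loftin for the affine setting. Fix an initial Hermitian metric $h_0$ and, for each $\varepsilon \in (0,1]$, look for a metric $h_\varepsilon$ solving the perturbed equation
\[
\Lambda \bigl( F_{h_\varepsilon} + [\varphi \, , \varphi^{*_{h_\varepsilon}}] \bigr) - \lambda \cdot \id_E \;=\; -\varepsilon \log\bigl( h_\varepsilon h_0^{-1} \bigr) .
\]
Openness at $\varepsilon = 1$ and local continuation to smaller $\varepsilon$ go through much as in the non--Higgs case, since the term $[\varphi,\varphi^{*_h}]$ contributes non--negatively to the linearization; equivalently, the associated Donaldson--type functional becomes strictly more convex along geodesics in the space of Hermitian metrics.

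The main obstacle is passing to the limit $\varepsilon \to 0$, which requires a uniform $C^0$ bound on $\log(h_\varepsilon h_0^{-1})$. If such a bound holds, standard elliptic regularity applied on $M$ (equivalently, on the fiber--invariant reduction over $TM$) yields a smooth affine Yang--Mills--Higgs metric. If it fails, I would run the Uhlenbeck--Yau procedure: after renormalization, extract a non--trivial weak limit of the rescaled endomorphisms and build from its spectral projections a weakly flat--holomorphic subsheaf $F \subset E$. The Higgs refinement of Simpson then forces $F$ to be $\varphi$--invariant, so that $F$ is a genuine Higgs subsheaf of $(E,\varphi)$. A degree computation using the affine Gauduchon metric and the integration--by--parts formulas peculiar to the special affine setting yields $\mu(F) \ge \mu(E)$, contradicting stability. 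Combining the affine conventions of Loftin's argument with Simpson's $\varphi$--invariance step is the technical heart of the proof.

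For uniqueness, suppose $h_1$ and $h_2$ are two affine Yang--Mills--Higgs metrics and set $s = h_1^{-1} h_2$, a positive self--adjoint endomorphism of $E$. A Weitzenb\"ock--type identity relating $s$, $F_{h_i}$ and $\varphi$ produces a non--negative Higgs contribution; integrating against the parallel volume form supplied by the special affine structure forces $s$ to be covariantly constant and to commute with $\varphi$. Since stability of $(E,\varphi)$ will be shown to imply simplicity (this is the content of the proposition on stable--implies--simple referred to in the statement), such an $s$ must be a positive constant multiple of $\id_E$, and hence $h_2 = c \cdot h_1$.
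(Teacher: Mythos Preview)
Your outline is correct and follows essentially the same route as the paper: the continuity method on the perturbed equation $L_\varepsilon(f)=0$ (after normalizing $h_0$ so that $\tr K_0^\varphi = r\gamma$, which secures the starting point $\varepsilon=1$), openness via injectivity of the linearization where the Higgs term contributes a nonnegative $|[\varphi^f,\eta]|^2$, closedness via the a priori estimates, and---if the $L^2$ norms blow up---extraction of a $\varphi$--invariant destabilizing object from spectral projections of rescaled $f_\varepsilon^\sigma$; uniqueness is likewise deduced from simplicity exactly as you indicate. The one affine--specific point worth sharpening in your write--up is that the limiting projection $\varpi$ is automatically smooth and yields a genuine flat \emph{subbundle} (not merely a weakly holomorphic subsheaf), since in this setting the relevant objects are locally constant rather than coherent.
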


The analogue of Theorem \ref{thm0} holds for flat real Higgs
bundles (see Corollary \ref{real2}). We also note that Theorem
\ref{thm0} extends to the flat principal Higgs $G$--bundles,
where $G$ is any reductive affine algebraic group over $\mathbb
C$ or of split type over $\mathbb R$; see Section \ref{se-G}.

We recall that a $tt^*$ bundle on a complex manifold $(M\, ,J)$ is a triple
$(E\, ,\nabla\, ,S)$, where $E$ is a $C^\infty$ real vector bundle over $M$,
$\nabla$ is a connection on $E$ and $S$ is a smooth section of $T^*M\otimes
\text{End}(E)$, such that the connection
$$
\nabla^\theta_v \,:=\,\nabla_v +\cos(\theta)\cdot S(v)+ \sin(\theta)\cdot
S(J(v)) \, , \quad v\, \in\, TM
$$
is flat for all $\theta\, \in\, \mathbb R$; see \cite{Sc1}, \cite{Sc2}.
It would be interesting to develop $tt^*$ bundles on affine manifolds.

\medskip
\noindent \textbf{Acknowledgements.}\,  We thank the referee for
helpful comments. The second author is grateful to the Simons
Foundation for support under Collaboration Grant for Mathematicians
210124.

\section{Preparations and statement of the theorem}

Let $M$ be an affine manifold of dimension $n$. As mentioned
before, $TM$ has a natural complex structure. This complex
manifold will be denoted by $M^\CC$. The zero section of
$TM\, =\, M^\CC$ makes $M$ a real submanifold of $M^\CC$.
Given an atlas on $M$ compatible with the affine structure
(so the transition functions are affine maps) the corresponding
coordinates $\{ x^i \}$ are called {\em local affine
coordinates}. If $\{ x^i \}$ is defined on $U\, \subset\, M$,
then
on $TU\, \subset\, TM$, we have the holomorphic coordinate
function $z^i \,:=\, x^i + \sqrt{-1} \, y^i$, where $y^i$
is the fiber coordinate corresponding to the local trivialization
of the tangent bundle given by $\{\frac{\partial}{\partial
x^i}\}_{i=1}^n$.

Define the bundle of $(p \, , q)$ forms on $M$ by
\[
  \A^{p,q} \, := \, \bigwedge\nolimits^p T^\ast M \tensor \bigwedge\nolimits^q T^\ast M \, .
\]
Given local affine coordinates $\{ x^i \}_{i=1}^n$ on $M$, we will
denote the induced frame on $\A^{p,q}$ as
\[
  \big\{ dz^{i_1} \wedge \cdots \wedge dz^{i_p}
  \tensor d\overline{z}^{j_1} \wedge \cdots \wedge d\overline{z}^{j_q}
\big\}
\, ,
\]
where $z^i = x^i + \sqrt{-1} \, y^i$ are the complex coordinates on $M^\CC$ defined above; note that $dz^i = d\bar z^i = dx^i$ on $M$. There is a natural restriction map from $(p \,, q)$--forms on the complex manifold $M^\CC$ to $(p \,, q)$--forms on $M$ given in local affine coordinates on an open subset $U \subset M$ by
\begin{equation} \label{restriction} \begin{split}
  &\sum \, \phi_{i_1, \ldots, i_p, j_1, \ldots, j_q} \, (dz^{i_1} \wedge \cdots \wedge dz^{i_p}) \wedge (d\bar z^{j_1} \wedge \cdots \wedge d\bar z^{j_q}) \\
  \longmapsto\, &\sum \phi_{i_1, \ldots, i_p, j_1, \ldots, j_q}|_U \, (dz^{i_1} \wedge \cdots \wedge dz^{i_p}) \tensor (d\bar z^{j_1} \wedge \cdots \wedge d\bar z^{j_q}) \, ,
\end{split} \end{equation}
where $\phi_{i_1, \ldots, i_p, j_1, \ldots, j_q}$ are smooth functions on $TU \subset TM = M^\CC$, $U$ is considered as the zero section of $TU \longrightarrow U$, and the sums are taken over all $1 \leq i_1 < \cdots < i_p \leq n$ and $1 \leq j_1 < \cdots < j_q \leq n$.

One can define natural operators
\begin{align*}
  \del:  \, \A^{p,q} \,&\longrightarrow\, \A^{p+1,q} \quad \text{ and} \\
  \dbar: \, \A^{p,q} \,&\longrightarrow\, \A^{p,q+1}
\end{align*}
given in local affine coordinates by
\[
  \del\big(\phi \tensor (d\bar z^{j_1} \wedge \cdots \wedge d\bar z^{j_q})\big)
  \,:=\, \frac{1}{2} \, (d \phi) \tensor (d\bar z^{j_1} \wedge \cdots \wedge d\bar z^{j_q})
\]
if $\phi$ is a $p$--form, respectively by
\[
  \dbar\big((dz^{i_1} \wedge \cdots \wedge dz^{i_p}) \tensor \psi\big)
  \,:=\, (-1)^p \frac{1}{2} \, (dz^{i_1} \wedge \cdots \wedge dz^{i_p}) \tensor (d \psi)
\]
if $\psi$ is a $q$--form. These operators are the restrictions of the corresponding operators on $M^\CC$ with respect to the restriction map given in \eqref{restriction}.

Similarly, there is a wedge product defined by
\[
  (\phi_1 \tensor \psi_1) \wedge (\phi_2 \tensor \psi_2) \,:=\, (-1)^{q_1 p_2} \, (\phi_1 \wedge \phi_2) \tensor (\psi_1 \wedge \psi_2)
\]
if $\phi_i \tensor \psi_i$ are forms of type $(p_i \,, q_i)$, $i = 1, 2$; as above, it is the restriction of the wedge product on $M^\CC$.

The tangent bundle $TM$ is equipped with a flat
connection, which we will denote by $D$. The flat connection on
$T^\ast M$ induced by $D$ will be denoted by $D^*$.

The affine manifold $M$ is called {\em special\/} if it admits a volume
(= top--degree) form
$\nu$ which is covariant constant with respect to the flat
connection $D$ on $TM$.

In the case of special affine structures, $\nu$ induces natural
maps
\begin{align*}
  &\A^{n,q} \longrightarrow \bigwedge\nolimits^q T^\ast M, \quad \nu \tensor \chi \longmapsto (-1)^{\frac{n(n-1)}{2}} \chi \, , \\
  &\A^{p,n} \longrightarrow \bigwedge\nolimits^p T^\ast M, \quad \chi \tensor \nu \longmapsto (-1)^{\frac{n(n-1)}{2}} \chi \, ,
\end{align*}
which are called \textit{division\/} by $\nu$. In particular, any
$(n \, , n)$ form $\chi$ can be integrated by considering the integral of $\frac{\chi}{\nu}$. (See \cite{Lo09}.)

A smooth Riemannian metric $g$ on $M$ gives rise to a $(1 \, , 1)$ form
expressed in local affine coordinates as
\begin{equation}\label{deomg}
  \omega_g = \sum_{i,j=1}^n g_{ij} \, dz^i \tensor d\overline{z}^j \, ;
\end{equation}
it is the restriction of the corresponding $(1 \, , 1)$ form on
$M^\CC$ given by the extension of $g$ to $M^\CC$. The metric $g$
is called an {\em affine Gauduchon metric\/} if
$$
\del \dbar (\omega_g^{n-1}) \,=\, 0
$$
(recall that $n\, =\,\dim M$). By \cite[Theorem 5]{Lo09}, on a compact
affine manifold, every conformal class of Riemannian metrics
contains an affine Gauduchon metric, which is unique up to a
positive scalar.

Take a pair $(E\, ,\nabla)$, where $E$ is a complex vector bundle
on $M$, and $\nabla$ is a flat connection on $E$.
(In the following, we will
always be concerned with complex vector bundles until we give
analogues to our results for real vector bundles in
Corollary \ref{real2}.) The pullback of $E$ to $TM\, =\, M^\CC$
by the natural
projection $TM\, \longrightarrow\, M$ will be denoted by
$E^\CC$. The flat connection $\nabla$ pulls back to a
flat connection on $E^\CC$. This flat vector bundle on $M^\CC$
can be considered as an extension of the
flat vector bundle $(E\, ,\nabla)$ on the zero section of $TM$.

Let $h$ be a Hermitian metric on $E$; it defines a
Hermitian metric on the pulled back vector bundle $E^\CC$.
Let $d^h$ be the Chern
connection associated to this Hermitian metric on $E^\CC$. Then
$d^h$ corresponds to a pair $$(\del^h \, , \dbar) \,=\,
(\del^{h,\nabla} \,
, \dbar^{\nabla})$$ of operators on $\A^{p,q}(E) := \A^{p,q}
\tensor E$. This pair is called the {\em extended Hermitian
connection\/} of $(E \, , h)$ (see \cite{Lo09}). Similarly, we
have an {\em extended connection form} $$\theta \,\in\,
\A^{1,0}(\End E)\, ,$$ an {\em extended curvature form\/} $\Omega
\,=\, \dbar \theta \in \A^{1,1}(\End E)$, an {\em extended mean
curvature} $$K\,=\, \tr_g \Omega \,\in\, \A^{0,0}(\End E)$$ and
an {\em extended first Chern form\/} $c_1(E \, , h) \,=\, \tr
\Omega\,\in\, \A^{1,1}$, which are the restrictions of the
corresponding objects on $E^\CC$. Here $\tr_g$ denotes
contraction of differential forms
using the Riemannian metric $g$, and $\tr$ denotes
the trace map on the fibers of $\End E$.

The extended first Chern form $c_1(E \, , h)$ and the extended
mean curvature are related by
\begin{equation} \label{chern-curvature}
  (\tr K) \, \omega_g^n = n \, c_1(E \, , h) \wedge \omega_g^{n-1} \, .
\end{equation}

The {\em degree\/} of a flat vector bundle $E$ over a compact
special affine manifold $M$ equipped with an affine Gauduchon
metric $g$ is defined to be
\begin{equation} \label{degree}
  \deg_g E \, := \, \int_M \frac{c_1(E \, , h) \wedge \omega_g^{n-1}}{\nu} \, ,
\end{equation}
where $h$ is any Hermitian metric on $E$. This is well--defined by
\cite[p.\ 109]{Lo09}.  Even though $E$ admits a flat connection
$\nabla$, there is no reason in general for the degree to be zero in
the Gauduchon case. In particular, we can extend $\nabla$ to a flat
extended connection on $E$ and then define an extended first Chern
form $c_1(E \,, \nabla)$. But
\[
  c_1(E \,, \nabla) - c_1(E \,, h)
  = \tr \dbar \theta_\nabla - \dbar \del \log\det h_{\alpha \bar\beta}
\]
is $\dbar$--exact but not necessarily
$\del\dbar$--exact.  Thus, by integration by parts, the
Gauduchon condition is insufficient to force the degree to be zero.

If ${\rank E} \not=\nolinebreak 0$, the {\em slope\/} of $E$ is
defined as
\[
  \mu_g (E) \, := \, \frac{\deg_g E}{\rank E} \, .
\]

Now we introduce Higgs fields on flat vector bundles.

\begin{definition}\label{dh}
Let $(E \, , \nabla)$ be a smooth vector bundle on $M$ equipped
with a flat connection. A {\em flat Higgs field\/} on $(E \, ,
\nabla)$ is defined to be a smooth section $\varphi$ of
$T^\ast M \tensor \End E$ such that
\begin{enumerate}
\item[(i)] $\varphi$ is covariant constant,
meaning the connection operator
\begin{equation}\label{dna}
\widetilde{\nabla}\, :\, T^\ast M \tensor {\End E}\,
\longrightarrow\, T^\ast M \tensor T^\ast M \tensor {\End E}
\end{equation}
defined by the connections $\nabla$ and $D^*$ on $E$
and $T^\ast M$ respectively, annihilates $\varphi$, and

\item[(ii)] $\varphi \wedge \varphi\, =\, 0$.
\end{enumerate}

If $\varphi$ is a flat Higgs field on $(E \, , \nabla)$, then $(E
\, , \nabla \, , \varphi)$ (or $(E \, , \varphi)$ if $\nabla$ is
understood from the context) is called a {\em flat Higgs bundle}.
\end{definition}

Note that (i) means that the homomorphism
\[
  \varphi\,: \, TM \,\longrightarrow\, \End E
\]
is a homomorphism of flat vector bundles, where $TM$ (respectively,
$\End E$) is equipped with the flat connection $D$ (respectively, the
flat connection induced by the flat connection $\nabla$ on $E$). The
homomorphism $\varphi$ induces a homomorphism
\[
\varphi'\,: \, TM\otimes E \,\longrightarrow\, E\, .
\]
The connections $D$ and $\nabla$ together define a connection on
$TM\otimes E$. The condition (i) means that $\varphi'$ takes
locally defined flat sections of $TM\otimes E$ to locally defined
flat sections of $E$.

Let
\begin{equation}\label{dna2}
d^{\nabla}\, :\, T^\ast M \tensor {\End E}\,
\longrightarrow\, \left(\bigwedge\nolimits^2 T^\ast M\right)
\tensor {\End E}
\end{equation}
be the composition
$$
T^\ast M \tensor {\End E}\,
\stackrel{\widetilde{\nabla}}{\longrightarrow}\,
T^\ast M \tensor T^\ast M \tensor {\End E}
\, \stackrel{{\rm pr}\times\id_{\End E}}{\longrightarrow}
\, \left(\bigwedge\nolimits^2 T^\ast
M\right)\tensor {\End E}\, ,
$$
where ${\rm pr}\, :\, T^\ast M \tensor T^\ast M\, \longrightarrow
\, \bigwedge\nolimits^2 T^\ast M$ is the natural projection, and
$\widetilde{\nabla}$ is defined in \eqref{dna}. So if
$\varphi$ is a flat Higgs field on $(E \, , \nabla)$, then
$d^{\nabla}(\varphi) \,=\, 0$.

The space of all connections on $E$ is an
affine space for the vector space of smooth sections of
$T^\ast M \tensor \End E$; a family of connections $\{ \nabla_t
\}_{t\in\mathbb R}$ is called \textit{affine\/} if
there is a smooth section $\alpha$ of $T^\ast M \tensor \End E$
such that $\nabla_t \,=\, \nabla_0+t\cdot\alpha$.

\begin{lemma}\label{family}
Giving a flat Higgs bundle $(E \, , \nabla \, , \varphi)$ is
equivalent to giving a smooth vector bundle $E$ together with a
$1$--dimensional affine family $\{\nabla_t \,:=\,
\nabla_0+t\cdot\alpha\}_{t\in\mathbb
R}$ of flat connections on $E$ such that the $\End
E$--valued $1$--form $\alpha$ is
flat with respect to the connection on $T^\ast M \tensor {\End
E}$ defined by $\nabla_0$ and $D^*$.
\end{lemma}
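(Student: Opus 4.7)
The plan is to prove the equivalence directly by setting up the obvious bijection: a flat Higgs bundle $(E\,,\nabla\,,\varphi)$ corresponds to the family $\{\nabla_t := \nabla + t\cdot\varphi\}_{t\in\mathbb R}$, i.e.\ take $\nabla_0 = \nabla$ and $\alpha = \varphi$. Once this assignment is fixed, the two directions reduce to the same curvature identity, expanded in powers of $t$. The main computational input is the standard formula for the curvature of a perturbed connection: for a connection $\nabla_0$ on $E$ and $\alpha \in C^\infty(T^\ast M\otimes\End E)$,
\[
F(\nabla_0 + t\cdot\alpha) \,=\, F(\nabla_0) + t\cdot d^{\nabla_0}\alpha + t^2\cdot(\alpha\wedge\alpha)\, ,
\]
where $d^{\nabla_0}$ is the operator from \eqref{dna2} and $\alpha\wedge\alpha$ uses composition in $\End E$.

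For the forward direction, assume $(E\,,\nabla\,,\varphi)$ is a flat Higgs bundle and set $\nabla_0 := \nabla$, $\alpha := \varphi$. Condition (i) in Definition \ref{dh} says that $\widetilde{\nabla}\varphi = 0$, which is precisely the statement that $\alpha$ is flat for the connection on $T^\ast M\otimes\End E$ determined by $\nabla_0$ and $D^\ast$. Applying the projection $\mathrm{pr}$ as in \eqref{dna2} gives $d^{\nabla_0}\alpha = 0$; together with $F(\nabla_0) = 0$ (since $\nabla$ is flat) and $\alpha\wedge\alpha = 0$ (condition (ii)), the displayed curvature formula yields $F(\nabla_0 + t\cdot\alpha) = 0$ for every $t\in\mathbb R$, so the entire affine family is flat.

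For the converse, suppose $E$ is a smooth bundle carrying an affine family $\{\nabla_t = \nabla_0 + t\cdot\alpha\}_{t\in\mathbb R}$ of flat connections, with $\alpha$ flat with respect to the connection induced by $\nabla_0$ and $D^\ast$. Set $\nabla := \nabla_0$ and $\varphi := \alpha$. Evaluating $F(\nabla_t) = 0$ at $t = 0$ gives flatness of $\nabla$. The hypothesis that $\alpha$ is covariant constant under $\widetilde{\nabla}$ is exactly condition (i) of Definition \ref{dh}, and it also implies $d^{\nabla_0}\alpha = 0$. Substituting these into the curvature formula, the identity $F(\nabla_t) = t^2\cdot(\alpha\wedge\alpha) = 0$ must hold for all $t\in\mathbb R$, forcing $\alpha\wedge\alpha = 0$, which is condition (ii). Hence $(E\,,\nabla\,,\varphi)$ is a flat Higgs bundle, and the two constructions are manifestly mutual inverses.

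There is no real obstacle here beyond bookkeeping: the content is entirely the polynomial identity in $t$ produced by the curvature expansion, so the equivalence is obtained by reading off the coefficients of $t^0$, $t^1$, $t^2$. The only point requiring a small amount of care is that condition (i) is genuinely stronger than $d^{\nabla}\varphi = 0$ (it is the full non-antisymmetrized vanishing $\widetilde{\nabla}\varphi = 0$), which is why the lemma phrases the hypothesis on $\alpha$ as flatness under the connection on $T^\ast M\otimes\End E$ rather than as closedness of $\alpha$ under $d^{\nabla_0}$.
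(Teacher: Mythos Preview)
Your proof is correct and follows essentially the same approach as the paper: both set up the bijection $\nabla_0=\nabla$, $\alpha=\varphi$ and then read off the coefficients of $t^0,t^1,t^2$ in the curvature expansion $F(\nabla_0+t\alpha)=F(\nabla_0)+t\,d^{\nabla_0}\alpha+t^2\,\alpha\wedge\alpha$. Your closing remark, that condition~(i) is strictly stronger than $d^{\nabla}\varphi=0$ and that this is why the lemma is stated with the full flatness hypothesis on $\alpha$, is a useful clarification the paper leaves implicit.
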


\begin{proof}
Given a flat Higgs bundle $(E \, , \nabla \, ,
\varphi)$, we define a family of connections on $E$ by $\nabla_t
\,:= \,\nabla + t \varphi$. In a locally constant frame of $E$
with respect to $\nabla$, we have $d^{\nabla}(\varphi) \,=\, 0$
(see \eqref{dna2} for $d^{\nabla}$) and the
curvature of $\nabla_t$ is as follows:
\begin{equation} \label{curvature}
d^{\nabla}(t \varphi) + (t \varphi) \wedge (t \varphi)\,=\, t \,
d^{\nabla}(\varphi) + t^2 \, \varphi \wedge \varphi \,=\, 0 \, ,
\end{equation}
so $\{ \nabla_t \}_{t \in \RR}$ is a $1$--dimensional affine
family of flat connections on $E$. From the definition of a flat
Higgs field given in Definition \ref{dh} it follows that this
$1$--dimensional affine
family of connections satisfies the condition in the lemma.

For the converse direction, assume that we are given a
$1$--dimensional affine family
of flat connections $\{\nabla_0+t\cdot\alpha\}_{t \in \RR}$ on
$E$, satisfying the condition that $\alpha$ is
flat with respect to the connection on $T^\ast M \tensor {\End
E}$ defined by $\nabla_0$ and $D^*$. Since
$$
0\,=\, d^{\nabla_0}(t \alpha) + (t \alpha) \wedge (t
\alpha)\,=\, t \,
d^{\nabla_0}(\alpha) + t^2 \, \alpha \wedge \alpha\, ,
$$
we conclude that $\alpha \wedge \alpha\,=\, 0$.

Since $\alpha$ is flat with respect to the connection on $T^\ast
M \tensor {\End E}$ defined by $\nabla_0$ and $D^*$, and $\alpha
\wedge \alpha\,=\, 0$, it follows that $(E\, ,\nabla_0\,
,\alpha)$ is a flat Higgs bundle.
\end{proof}

A Higgs field will always be understood as a section of
$\A^{1,0}(\End E)$, meaning it is expressed in local affine
coordinates as
\[
 \varphi \,=\, \sum_{i=1}^n \varphi_i \otimes dz^i\, ,
\]
where $\varphi_i$ are locally defined flat sections of $\End E$;
note that $dz^i\,=\, dx^i$ on $M$.
Given a Hermitian metric $h$
on $E$, the adjoint $\varphi^\ast$ of $\varphi$ with respect to $h$ will be regarded as an element of $\A^{0,1}(\End E)$. In local affine
coordinates, this means that
\[
 \varphi^\ast \,=\, \sum_{j=1}^n (\varphi_j)^\ast \tensor d\overline{z}^j \, .
\]
In particular, the Lie bracket $[\varphi \, , \varphi^\ast]$ is an element of $\A^{1,1}(\End E)$. Locally,
\[
 [\varphi \, , \varphi^\ast]
 \,=\, \varphi \wedge \varphi^\ast + \varphi^\ast \wedge \varphi
 \,=\, \sum_{i,j=1}^n \big(\varphi_i \circ (\varphi_j)^\ast -
(\varphi_j)^\ast \circ
\varphi_i\big) \tensor dz^i \tensor d\overline{z}^j \, .
\]

Let $E$ be a flat vector bundle on $M$ equipped with a flat Higgs
field $\varphi$ as well as a Hermitian metric $h$. The {\em extended
connection form\/} $\theta^\varphi$ of the Hermitian flat Higgs bundle $(E\, ,
\varphi \, , h)$ is defined to be
\[
 \theta^\varphi \, := \, (\theta + \varphi \, , \varphi^\ast) \, \in \, \A^{1,0}(\End E) \oplus \A^{0,1}(\End E) \, ,
\]
where $\varphi^\ast$ denotes the adjoint of $\varphi$ with
respect to $h$. This extended connection form corresponds to
the connection form of $d^h + \varphi + \varphi^\ast$ on $E^\CC
\longrightarrow M^\CC$. Analogously, the {\em extended curvature form\/}
$\Omega^\varphi$ of $(E \, , \varphi \, , h)$ is defined to be
\[
 \Omega^\varphi \, := \, \big(\del^h \varphi \, , \dbar \theta + [\varphi \, , \varphi^\ast] \, , \dbar(\varphi^\ast)\big)
 \, \in \A^{2,0}(\End E) \oplus \A^{1,1}(\End E) \oplus \A^{0,2}(\End E) \, .
\]
It corresponds to the curvature form of the connection $d^h +
\varphi +
\varphi^\ast$ on $E^\CC$. As in the usual case, the {\em extended mean curvature\/} $K^\varphi$ of $(E \, , \varphi \, , h)$ is obtained by contracting the $(1 \, , 1)$ part of the extended curvature $\Omega^\varphi$ using the Riemannian metric $g$, so
\begin{equation}\label{Kvp}
 K^\varphi \, := \, \tr_g \big(\dbar \theta + [\varphi \, , \varphi^\ast]\big) \, \in \, \A^{0,0}(\End E) \, .
\end{equation}
Since $\tr [\varphi \, , \varphi^\ast] = 0$, we have $\tr
K^\varphi = \tr K$, and so by \eqref{chern-curvature}, the
extended mean curvature $K^\varphi$ of $(E \, , \varphi \, , h)$
also is related to the first Chern form $c_1(E \, , h)$ by
\begin{equation} \label{chern-higgs}
 (\tr K^\varphi) \, \omega_g^n \,=\, n \, c_1(E \, , h) \wedge
\omega_g^{n-1} \, .
\end{equation}

\begin{definition}
An {\em affine Yang--Mills--Higgs metric\/} on a flat Higgs bundle
$(E \, , \varphi)$ is a Hermitian metric $h$ on $E$ such that the
extended mean curvature $K^\varphi$ of $(E \, , \varphi \, , h)$ satisfies
the equation
\begin{equation} \label{ymh}
 K^\varphi \,=\, \gamma \cdot\id_E
\end{equation}
for some constant scalar $\gamma$, which is called the {\em Einstein
factor}.
\end{definition}

We show the uniqueness of affine Yang--Mills--Higgs metrics for simple flat Higgs bundles.

\begin{definition}
A flat Higgs bundle $(E \, , \varphi)$ is called {\em simple\/}
if every locally constant section $f$ of $\End E$ satisfying
$[\varphi \, , f] \,=\, 0$ is a constant scalar multiple of the
identity automorphism of $E$.
\end{definition}

\begin{lemma}\label{lemma1}
Let $(E \, , \varphi)$ be a flat Higgs
bundle over a compact affine manifold $M$ equipped with a Riemannian
metric $g$. Assume that $E$ admits an affine Yang--Mills--Higgs metric $h$
with Einstein factor $\gamma$. Let $s$ be a locally constant section of
$E$ with $\varphi(s) = 0$.
\begin{itemize}
\item If $\gamma \,<\, 0$, then $s \,=\, 0$.

\item If $\gamma \,=\, 0$, then $\del^h s \,=\, 0$ and $\varphi^\ast(s)
\,=\, 0$, where $\varphi^\ast$ is the adjoint of $\varphi$ with respect
to $h$.
\end{itemize}
\end{lemma}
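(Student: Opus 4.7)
The plan is to derive a Bochner-style integral identity for $|s|_h^2$ that couples the extended Chern curvature $R^h$ to the Yang--Mills--Higgs equation $K^\varphi = \gamma\cdot\id_E$, and then read off the conclusion from a pointwise positivity argument. The first observation is that, viewed as a section of $E^\CC$ via the natural projection $TM \to M$, the locally constant section $s$ is fiber-invariant in the $y$-coordinates, so $\dbar s = 0$ in the extended sense; combined with the assumption $\varphi(s) = 0$, this says that $s$ is annihilated by the Higgs-twisted anti-holomorphic operator $\dbar + \varphi$, which drives the rest of the computation.

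With $\dbar s = 0$, the sesquilinear Leibniz rule for the extended Chern connection $(\del^h,\dbar)$ gives $\del|s|_h^2 = h(\del^h s, s)$, and a second application (using $\dbar\del^h s = R^h s$, which in turn comes from $(\del^h)^2 = 0$ and the type decomposition of the extended curvature) produces the pointwise identity
\[
  \dbar\del|s|_h^2 \,=\, h(R^h s, s) - h(\del^h s, \del^h s) \,\in\, \A^{1,1}.
\]
Wedging with $\omega_g^{n-1}$, dividing by $\nu$, and integrating over $M$, the affine Gauduchon condition $\del\dbar(\omega_g^{n-1}) = 0$ combined with the affine integration-by-parts formalism of \cite{Lo09} kills the left-hand side, yielding
\[
  \int_M \frac{h(R^h s, s) \wedge \omega_g^{n-1}}{\nu} \,=\, \int_M \frac{h(\del^h s, \del^h s) \wedge \omega_g^{n-1}}{\nu}.
\]

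Now I bring in the Yang--Mills--Higgs hypothesis. From \eqref{Kvp}, $K^\varphi$ is the $g$-contraction of $R^h + [\varphi,\varphi^*]$, so $K^\varphi = \gamma\cdot\id_E$ together with $\varphi(s) = 0$ (which reduces $[\varphi,\varphi^*]s$ to $\varphi\varphi^* s$ and $h(\varphi\varphi^* s, s)$ to $h(\varphi^* s, \varphi^* s)$) gives the pointwise $(n,n)$-form identity
\[
  \gamma\,|s|_h^2\,\omega_g^n \,=\, n\,h(R^h s, s)\wedge\omega_g^{n-1} + n\,h(\varphi^* s, \varphi^* s)\wedge\omega_g^{n-1}.
\]
Integrating against $1/\nu$ over $M$ and substituting the curvature identity from above yields the master equation
\[
  \gamma\int_M |s|_h^2 \,\frac{\omega_g^n}{\nu} \,=\, n\int_M \frac{\big(h(\del^h s, \del^h s) + h(\varphi^* s, \varphi^* s)\big)\wedge\omega_g^{n-1}}{\nu}.
\]

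The right-hand side is a sum of two pointwise nonnegative integrands, since each is the restriction from $M^\CC$ via \eqref{restriction} of a Hermitian norm-squared $(1,1)$-form paired against a nonnegative top-form. If $\gamma < 0$, the left-hand side is nonpositive and the right-hand side is nonnegative, forcing both to vanish and hence $s \equiv 0$. If $\gamma = 0$, the right-hand side must vanish, and pointwise nonnegativity of each summand then forces $\del^h s = 0$ and $\varphi^* s = 0$ identically. The main technical point demanding care is the integration-by-parts step in the affine (non-K\"ahler) Gauduchon framework, together with the bookkeeping that verifies pointwise positivity of the two norm-squared summands against $\omega_g^{n-1}/\nu$; both are handled by restricting from $M^\CC$ via \eqref{restriction} and invoking the affine Gauduchon identities of \cite{Lo09}.
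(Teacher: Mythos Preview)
Your Bochner--integral argument is sound \emph{if} one has both the covariant constant volume form $\nu$ (so that $(n,n)$--forms can be integrated) and the affine Gauduchon condition $\del\dbar(\omega_g^{n-1})=0$ (so that the integration--by--parts step kills $\int_M \dbar\del|s|^2 \wedge \omega_g^{n-1}/\nu$). But the lemma as stated assumes only a compact affine manifold with an arbitrary Riemannian metric $g$: there is no special structure, no $\nu$, and no Gauduchon hypothesis. Under those hypotheses your integration step simply does not go through --- there is no volume form against which to integrate $(n,n)$--forms, and even if there were, the boundary term from $\del\dbar(\omega_g^{n-1})$ need not vanish. So as a proof of the stated lemma, this is a genuine gap.

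The paper sidesteps this entirely by working pointwise rather than integrally. The same ingredients you assembled --- $\dbar s = 0$, $\varphi(s)=0$, the curvature identity, and $K^\varphi = \gamma\,\id_E$ --- combine to give the \emph{pointwise} equality
\[
  \tr_g \del\dbar |s|^2 \;=\; -\gamma\,|s|^2 + |\del^h s|^2 + |\varphi^\ast(s)|^2,
\]
from which the conclusion follows by the maximum principle on the compact manifold $M$: if $\gamma\leq 0$ then $\tr_g\del\dbar|s|^2 \geq 0$, so $|s|^2$ is constant, and plugging back forces $-\gamma|s|^2 = |\del^h s|^2 = |\varphi^\ast(s)|^2 = 0$. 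This needs no integration, no $\nu$, and no Gauduchon condition, which is exactly why the lemma can be stated in the generality it is.
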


\begin{proof}
For any locally constant section $s$ of $E$ with $\varphi(s) = 0$, compute
\[
 \tr_g \del \dbar |s|^2 = - \gamma |s|^2 + |\del^h s|^2 + |\varphi^\ast(s)|^2
\]
and apply the maximum principle.
\end{proof}

\begin{proposition} \label{uniqueness}
Let $(E \, , \varphi)$ be a flat Higgs bundle on a compact affine
manifold $M$ equipped with a Riemannian metric $g$. If $(E \, ,
\varphi)$ is simple, then an affine Yang--Mills--Higgs metric on
$E$ is unique up to a positive scalar.
\end{proposition}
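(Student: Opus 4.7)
The plan is to adapt the classical uniqueness argument for Hermite--Einstein metrics on simple bundles, with Simpson's Higgs modification, to the present affine setting. First, I observe that any two affine Yang--Mills--Higgs metrics $h_1, h_2$ on $(E \, , \varphi)$ must share the same Einstein factor. Indeed, tracing \eqref{chern-higgs} and integrating against $1/\nu$ over $M$, and using \eqref{degree}, one obtains $\gamma_i \cdot \rank E \cdot \int_M \omega_g^n/\nu = n \deg_g E$ for $i = 1, 2$, whence $\gamma_1 = \gamma_2 =: \gamma$.

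Next, set $f := h_1^{-1} h_2$, a positive $h_1$--self--adjoint section of $\End E$; the goal is to show that $f$ is a positive constant multiple of $\id_E$. The technical heart of the proof is a pointwise Bochner--type identity for $\tr(f)$. Using the transformations $\theta_{h_2} - \theta_{h_1} = f^{-1} \del^{h_1} f$ and $\varphi^*_{h_2} = f^{-1} \varphi^*_{h_1} f$, the definition \eqref{Kvp} of $K^\varphi$ yields, after suitable contraction, an identity of the schematic shape
\[
 \tr_g \del \dbar \tr(f) \, = \, \bigl|\del^{h_1} f\bigr|^2_{g, h_1} + \bigl|[\varphi \,, f]\bigr|^2_{g, h_1} + \tr\bigl(f \cdot (K^\varphi_{h_2} - K^\varphi_{h_1})\bigr) \, ,
\]
where the commutator term $|[\varphi, f]|^2$ arises precisely from the $[\varphi, \varphi^*]$ contribution to the extended mean curvature, paralleling the appearance of $|\varphi^*(s)|^2$ in Lemma \ref{lemma1}. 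Since both metrics are Yang--Mills--Higgs with the same factor $\gamma$, the rightmost term vanishes, so $\tr(f)$ is subharmonic.

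Because $g$ is Riemannian, $\tr_g \del \dbar$ is (a positive constant multiple of) the Laplacian $\Delta_g$, hence elliptic, and the maximum principle on compact $M$ forces $\tr(f)$ to be constant. The Bochner identity then gives $\del^{h_1} f = 0$ and $[\varphi \,, f] = 0$ pointwise. Taking the $h_1$--adjoint of $\del^{h_1} f = 0$, and using self--adjointness of $f$, yields $\dbar f = 0$, so $f$ is locally flat for the connection on $\End E$ induced by $\nabla$. Because $f$ commutes with $\varphi$, simplicity of $(E \, , \varphi)$ gives $f = c \cdot \id_E$ for some $c \in \CC$, and positivity forces $c > 0$; hence $h_2 = c \, h_1$.

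The main obstacle is deriving the Bochner identity in the second paragraph with the correct signs: one has to track how $\del^h$, $\dbar$, and $\varphi^*$ transform under the rescaling $h_2 = h_1 f$, and to combine the resulting second--order terms with the cross--terms coming from $\tr_g[\varphi, \varphi^*]$ in such a way that the non--negative squared--norm expressions on the right--hand side appear with the correct signs. Once this identity is in hand, the maximum principle together with the definition of simplicity finish the proof cleanly.
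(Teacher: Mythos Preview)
Your overall strategy---a Bochner identity for $\tr f$ combined with the maximum principle---is sound and does lead to the conclusion, but there is a genuine gap in your first paragraph. The Proposition is stated for an arbitrary compact affine manifold with a Riemannian metric $g$; it does \emph{not} assume that $M$ is special (so there is no covariant constant volume form $\nu$ to integrate against) nor that $g$ is Gauduchon (so even if $\nu$ existed, the degree in \eqref{degree} would depend on the choice of Hermitian metric). Your appeal to \eqref{chern-higgs} and \eqref{degree} therefore does not establish $\gamma_1 = \gamma_2$ under the stated hypotheses. The paper sidesteps this by a more robust device: it gauge-transforms by $f^{1/2}$ to obtain a second flat Higgs bundle $(E', \varphi')$ on which $h_1$ is again Yang--Mills--Higgs, now with factor $\gamma_2$, and then applies Lemma~\ref{lemma1} to the flat section $f^{1/2}$ of $\Hom(E, E')$, which carries an induced Yang--Mills--Higgs metric with factor $\gamma_2 - \gamma_1$. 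This yields $\gamma_2 \geq \gamma_1$ by the maximum principle alone, without any integration; reversing the roles of $h_1$ and $h_2$ gives equality. (You could also recover $\gamma_1 = \gamma_2$ from your own Bochner identity applied to both $f$ and $f^{-1}$, again via the maximum principle rather than by integrating.)

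Once $\gamma_1 = \gamma_2$ is in hand, your route and the paper's diverge but both succeed. The paper continues with Lemma~\ref{lemma1} on $\Hom(E, E')$ to obtain $\del^h f^{1/2} = 0$ and $\varphi_{\Hom}^\ast(f^{1/2}) = 0$, then unwinds these to $\del_1 f = 0$ and $[\varphi, f] = 0$. Your direct computation on $\tr f$ is the more classical path (as in Kobayashi or L\"ubke--Teleman, with Simpson's Higgs term), avoiding the auxiliary bundle $E'$ at the cost of a longer pointwise calculation. Two minor corrections to your schematic identity: the squared norms on the right should be the weighted norms $\bigl|f^{-1/2}\,\del^{h_1} f\bigr|^2$ and $\bigl|f^{-1/2}[\varphi^\ast, f]\bigr|^2$ (compare the case $\sigma = 1$ of Proposition~\ref{estimate-destabilizing}), not the bare $h_1$-norms, though this does not affect nonnegativity; and $\tr_g \del\dbar$ is not literally a constant multiple of $\Delta_g$---there are first-order terms when $g$ is not affine K\"ahler---but it shares the principal symbol of $\Delta_g$ and has no zeroth-order term, which is all you need for the strong maximum principle.
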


\begin{proof}
Let $h_1$ and $h_2$ be two affine Yang--Mills--Higgs metrics on $E$ with
Einstein factors $\gamma_1$ and $\gamma_2$, respectively. Then there is an endomorphism $f$ of $E$ which is positive definite and self--adjoint with respect to $h_1$ (and $h_2$) such that
\[
 h_2(s \, , t) \,=\, h_1(f(s) \, , t)
\]
for all sections $s$ and $t$ of $E$.

Let $\nabla$ be the flat connection on $E$. Define
\[
 \nabla'\,:=\, f^{\frac{1}{2}} \circ \nabla \circ f^{-
\frac{1}{2}} \quad \text{and} \quad
 \varphi'\,:= \,f^{\frac{1}{2}} \circ \varphi \circ f^{-
\frac{1}{2}} \, .
\]
Then $\nabla'$ is another flat connection on $E$. Denote by $E'$ the new
flat structure on the underlying vector bundle of $E$ induced by
$\nabla'$. Since $\varphi'$ is locally constant with respect to
$\nabla'$, we obtain a new flat Higgs bundle $(E' \, , \varphi')$. The
endomorphism $f^{\frac{1}{2}}$ is a locally constant section of the flat
vector bundle $\Hom(E \, , E')$ and satisfies the equation
$$\varphi_{\Hom}(f^{\frac{1}{2}}) \,=\, 0\, ,$$ where $\varphi_{\Hom}$
is the
flat Higgs field on $\Hom(E \, , E')$ induced by $\varphi$ and
$\varphi'$. We observe that $h_1$ is an affine Yang--Mills--Higgs metric
on $(E' \, , \varphi')$ with Einstein factor $\gamma_2$, and so the
metric $h$ on $\Hom(E \, , E')$ induced by $h_1$ on both $E$ and $E'$ is
an affine Yang--Mills--Higgs metric with Einstein factor $\gamma_2 -
\gamma_1$.

As $f^{\frac{1}{2}} \neq 0$, Lemma \ref{lemma1} implies that
$\gamma_2 - \gamma_1 \geq 0$. By reversing the roles of $h_1$ and $h_2$,
we obtain $\gamma_2 - \gamma_1 = 0$, and so from Lemma \ref{lemma1}
we conclude that $\del^h f^{\frac{1}{2}} \,=\, 0$ and
$\varphi_{\Hom}^\ast(f^{\frac{1}{2}}) \,=\, 0$.

We write $(\del_1 \, , \dbar) = (\del^{h_1,\nabla} \, , \dbar^\nabla)$ and $(\del' \, , \dbar') = (\del^{h_1,\nabla'} \, , \dbar^{\nabla'})$ for the extended Hermitian connections of $(E \, , h_1)$ and $(E' \, , h_1)$, respectively, and calculate
\[
 0 \,=\, \del^h f^{\frac{1}{2}} \,=\, \del' \circ
f^{\frac{1}{2}} -
f^{\frac{1}{2}} \circ \del_1 \,= \,f^{- \frac{1}{2}} \circ \del_1
\circ f - f^{\frac{1}{2}} \circ \del_1 \,=\, f^{- \frac{1}{2}}
\circ \del_1 f \, ,
\]
which implies that $\del_1 f \,=\, 0$. Since $f$ is self--adjoint
with respect to $h_1$, it follows that $\dbar f = 0$.

In an analogous way, we compute
\[
0 \,=\, \varphi_{\Hom}^\ast(f^{\frac{1}{2}})\,=\, (\varphi')^\ast
\circ f^{\frac{1}{2}} - f^{\frac{1}{2}} \circ \varphi^\ast
\,=\, f^{- \frac{1}{2}} \circ \varphi^\ast \circ f -
f^{\frac{1}{2}}\circ \varphi^\ast\,=\, f^{- \frac{1}{2}} \circ
[\varphi^\ast \, , f] \, ,
\]
which implies that $[\varphi^\ast \, , f] \,=\, 0$. Again, since
$f$ is self--adjoint with respect to $h_1$, it follows that
$[\varphi \, , f] \,=\, 0$. As $(E \, , \varphi)$ is simple, $f$
must be a constant scalar multiple of the identity automorphism
of $E$.
\end{proof}

\begin{definition}\label{def stable}
Let $(E \, , \varphi)$ be a flat Higgs bundle on a compact special affine manifold $M$ equipped with an affine Gauduchon metric $g$.
\begin{enumerate}
\item[(i)] $(E \, , \varphi)$ is called {\em stable\/}
(respectively, {\em semistable}) if for every flat subbundle $F$
of $E$ with $0 < \rank F < \rank E$ which is preserved by
$\varphi$, meaning $\varphi(F)\,\subset\,T^\ast M \tensor F$, we
have
\begin{equation} \label{stable}
 \mu_g (F) \,<\, \mu_g (E) \quad
 \text{(respectively, }
 \mu_g (F)\, \leq\, \mu_g (E)
 \text{)} \, .
\end{equation}
\item[(ii)] $(E \, , \varphi)$ is called {\em polystable\/} if
\[
 (E \, , \varphi) \,=\, \bigoplus_{i=1}^N \, (E_i \, , \varphi_i)
\]
with stable flat Higgs bundles $(E_i \, , \varphi_i)$ of the same slope
$\mu_g (E_i)\, =\, \mu_g (E)$.
\end{enumerate}
\end{definition}

\begin{remark}
If $\{ \nabla_t \}_{t \in \RR}$ is the family of flat
connections on $E$ satisfying the condition in Lemma
\ref{family} and corresponding to the flat Higgs bundle
$(E \, , \varphi)$, then
Definition~\ref{def stable} (i) is equivalent to
the condition that \eqref{stable} holds for every smooth subbundle $F$
of $E$ with $0 < \rank F < \rank E$ which is preserved by $\nabla_t$ for all $t$.
\end{remark}

\begin{proposition} \label{stable-simple}
Every stable flat Higgs bundle over a compact special affine manifold is simple.
\end{proposition}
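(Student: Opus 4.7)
The plan is to argue by contrapositive: assume $(E \, , \varphi)$ is not simple, so there exists a locally constant section $f$ of $\End E$ with $[\varphi \, , f] \,=\, 0$ that is not a constant scalar multiple of $\id_E$, and produce a flat $\varphi$-invariant subbundle of $E$ violating the slope inequality of Definition~\ref{def stable}. The key idea is to replace $f$ by $N \,:=\, f - \lambda \id_E$ for a suitable eigenvalue $\lambda$; then both $\ker N$ and $\operatorname{Im} N$ become proper nonzero $\varphi$-invariant flat subbundles to which stability can be applied simultaneously.

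First I would observe that since $f$ is covariant constant and $M$ is connected, the characteristic polynomial of $f$ has constant coefficients, so the eigenvalues of $f$ are constants $\lambda_1, \ldots , \lambda_k \in \CC$. Because $f$ is not scalar, I can choose an eigenvalue $\lambda$ for which $N \,:=\, f - \lambda \id_E$ is a nonzero locally constant section of $\End E$; it still satisfies $[\varphi \, , N] \,=\, 0$. Then $F_1 \,:=\, \ker N$ and $F_2 \,:=\, \operatorname{Im} N$ are smooth subbundles of $E$ of constant rank, and they are flat because $N$ is flat---locally, in a frame of $E$ that trivializes $\nabla$, $N$ is a constant matrix, so $\ker N$ and $\operatorname{Im} N$ acquire local flat frames. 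Both are proper and nonzero: $F_1 \neq 0$ because $\lambda$ is an eigenvalue of $f$, and $F_1 \neq E$ because $N \neq 0$; by rank-nullity, $0 < \rank F_2 < \rank E$. Writing $\varphi = \sum_i \varphi_i \otimes dz^i$ in local affine coordinates, the relations $[\varphi_i \, , N] = 0$ give $\varphi_i(F_j) \subset F_j$ for $j = 1 \, , 2$, so both $F_1$ and $F_2$ are $\varphi$-invariant.

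Applying stability to $F_1$ yields $\mu_g (F_1) < \mu_g(E)$. Using additivity of degree on the short exact sequence $0 \to F_1 \to E \to E/F_1 \to 0$ of flat bundles---a consequence of the affine Gauduchon condition $\del\dbar(\omega_g^{n-1}) \,=\, 0$, cf.\ \cite{Lo09}---this translates to $\mu_g (E/F_1) > \mu_g(E)$. The endomorphism $N$ factors through the quotient and induces an isomorphism $E/F_1 \xrightarrow{\sim} F_2$ of flat Higgs bundles (flat and $\varphi$-equivariant by construction), so $\mu_g(F_2) \,=\, \mu_g(E/F_1) > \mu_g(E)$. This contradicts the inequality $\mu_g(F_2) < \mu_g(E)$ obtained by applying stability to $F_2$, completing the proof. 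The step I expect to require the most care is the additivity of degree on such short exact sequences of flat bundles in the affine Gauduchon setting: one has to verify that the second-fundamental-form contribution to the first Chern form is of a type whose integral against $\omega_g^{n-1}$ vanishes under $\del\dbar(\omega_g^{n-1}) = 0$, but this is part of the foundational package for $\deg_g$ already established in \cite{Lo09}.
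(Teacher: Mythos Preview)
Your argument is correct and is essentially the same as the paper's: the paper invokes the proof of \cite[Proposition 30]{Lo09} and adds only that $[\varphi,f]=0$ forces the image $H:=(f-a\,\id_E)(E)$ to be $\varphi$--invariant, which is precisely your $F_2$. You have simply written out in full the kernel/image see--saw argument that \cite{Lo09} supplies, including the additivity of $\deg_g$ on short exact sequences of flat bundles, which is indeed part of the foundational package there.
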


\begin{proof}
Apply the proof of \cite[Proposition 30]{Lo09}, and note that the
condition $[\varphi \, , f]\,= \,0$ implies that the
subbundle $H\,:=\, (f - a
\id_E)(E)$ of $E$ is preserved by $\varphi$.
\end{proof}

We can now state our main theorem.

\begin{theorem} \label{main}
Let $M$ be a compact special affine manifold equipped with an
affine Gauduchon metric $g$. Let $(E \, , \varphi)$ be a stable
flat Higgs vector bundle over $M$. Then $E$ admits an affine
Yang--Mills--Higgs metric.
\end{theorem}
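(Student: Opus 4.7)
The natural strategy is the continuity method, adapting the scheme of \cite{Lo09} (which treats the case $\varphi = 0$) by folding in the Higgs-field modifications of Simpson \cite{Si88} and Jacob \cite{Ja11}. Fix a background Hermitian metric $k$ on $E$ and, for $\eps \in [0,1]$, consider the perturbed equation
$$
 L_\eps(h) \,:=\, K^\varphi_h - \gamma\cdot\id_E + \eps\log(k^{-1}h) \,=\, 0 \, ,
$$
where $\gamma$ is the constant forced by taking the trace of the equation and invoking \eqref{chern-higgs} together with \eqref{degree}; the identity $\tr[\varphi,\varphi^\ast] = 0$ makes $\gamma$ the same constant as in the non-Higgs case. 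At $\eps = 1$ the zeroth-order term makes the equation coercive, so a direct variational argument gives a solution; one then studies the subset $J \subset [0,1]$ of parameters at which $L_\eps(h) = 0$ is solvable, and tries to show that $J$ is open, closed, and nonempty.

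Openness of $J$ is a standard implicit function theorem argument: the linearization of $L_\eps$ at a solution $h_\eps$ has the shape $\tr_g \del\dbar + \eps + [\varphi^\ast,[\varphi,\cdot]]$ acting on self-adjoint sections of $\End E$, and since the zeroth-order part is nonnegative with the $\eps$ summand strictly positive, this operator is invertible for every $\eps > 0$. Closedness reduces, via elliptic bootstrap using $\del$, $\dbar$ and integration by parts through division by $\nu$ under the Gauduchon condition $\del\dbar\,\omega_g^{n-1} = 0$, to a uniform $C^0$ bound on $s_\eps := \log(k^{-1}h_\eps)$ as $\eps \to 0$. A Weitzenböck-type identity refining the one used in Lemma \ref{lemma1} controls $\tr_g \del\dbar\,|s_\eps|^2$ in terms of $L_\eps(h_\eps)$ plus a nonnegative Higgs contribution involving $[\varphi,\varphi^\ast]$; together with the maximum principle this is precisely what the bootstrap requires.

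The hard step is the $C^0$ estimate itself. If $\|s_\eps\|_{C^0} \to \infty$, one renormalizes $\widetilde s_\eps := s_\eps/\|s_\eps\|_{L^1}$ and, following Uhlenbeck-Yau, extracts a weak $L^2_1$ subsequential limit $s_\infty$; its spectral projections, combined with the affine regularity theorem of \cite{Lo09}, produce a flat subbundle $F \subset E$ of intermediate rank. The extra Higgs term in $L_\eps$ yields, in the limit, a weak commutation $[\varphi,\pi_F] = 0$, so $F$ is $\varphi$-preserved in the sense of Definition \ref{def stable}. A slope computation using \eqref{degree}, the Gauduchon condition and integration against $\nu$ then gives $\mu_g(F) \geq \mu_g(E)$, contradicting the assumed stability of $(E,\varphi)$. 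The most delicate point is propagating the commutator condition $[\varphi,\pi_F] = 0$ through the renormalized weak limit: $\varphi$ couples holomorphic and antiholomorphic parts of the equation, and $[\varphi,\varphi^\ast]$ is only $L^2$-controlled a priori, so the extra commutator has to be carried through the compactness argument carefully. Once this is handled, the remainder of the argument closely parallels \cite{Lo09}, and uniqueness of the resulting affine Yang--Mills--Higgs metric up to positive scalar follows from Proposition \ref{stable-simple} combined with Proposition \ref{uniqueness}.
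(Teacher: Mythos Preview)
Your proposal is correct and follows essentially the same continuity-method strategy as the paper: perturb the Yang--Mills--Higgs equation by $\eps\log f$, show $1\in J$, prove openness by the implicit function theorem, reduce closedness to a uniform $C^0$ bound, and extract a $\varphi$-invariant destabilizing flat subbundle when the bound fails. The only noteworthy technical difference is in the blow-up analysis: you renormalize $s_\eps$ by its $L^1$ norm and use Uhlenbeck--Yau--style spectral projections of the weak limit, whereas the paper (following \cite{LT95} and \cite{Lo09}) rescales $f_\eps$ by the reciprocal of its largest eigenvalue and takes the double limit $\lim_{\sigma\to 0}\lim_{i\to\infty}(\rho_i f_{\eps_i})^\sigma$ to produce the projection $\varpi$; both routes yield the same destabilizing subbundle, and your remark that carrying the commutator $[\varphi,\pi_F]=0$ through the weak limit is the delicate point matches exactly the extra work the paper does in Propositions \ref{estimate-destabilizing} and the subsequent $\varphi$-invariance argument.
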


Consider the special case where $\rank E = 1$, meaning $(E \,, \nabla)$ is a flat line bundle over $M$. In this case, the statement of Theorem \ref{main} turns out to be independent of the Higgs field $\varphi$. More precisely, a flat Higgs field on $(E \,, \nabla)$ is nothing but a smooth $1$--form on $M$ which is flat with respect to the flat connection $D^\ast$ on $T^\ast M$. Given a Hermitian metric $h$ on $E$, the extended mean curvature $K^\varphi$ of $(E \,, \varphi \,, h)$ coincides with the usual mean curvature $K$ of $(E \,, h)$, and thus the Yang--Mills--Higgs equation \eqref{ymh} for $(E \,, \varphi \,, h)$ reduces to the usual Yang--Mills equation for $(E \,, h)$. Since, as a line bundle, $E$ is automatically stable, this equation has a solution by \cite[Theorem 1]{Lo09}.

\section{Existence of Yang--Mills--Higgs metrics}

This section is dedicated to the proof of Theorem \ref{main}.

Let $M$ be a compact special affine manifold equipped with a covariant
constant volume form $\nu$ and an affine Gauduchon metric $g$.
Let $(E \, , \varphi)$ be a flat Higgs bundle over $M$. For
any Hermitian metric $h$ on $E$, \eqref{degree} and
\eqref{chern-higgs} together imply that
\[
 \int_M (\tr K^\varphi) \, \frac{\omega_g^n}{\nu}\,=
\,n \, \deg_g E \, ,
\]
where $K^\varphi$ denotes the extended mean curvature of
$(E \, , \varphi \, , h)$. Therefore, the Einstein factor
$\gamma$ of any affine Yang--Mills--Higgs metric on $(E \, , \varphi)$
must satisfy the equation
\begin{equation} \label{einstein-factor}
 \gamma \int_M \frac{\omega_g^n}{\nu} \,=\, n \, \mu_g (E) \, .
\end{equation}

Choose a background Hermitian metric $h_0$ on $E$. Any Hermitian metric $h$ on $E$ is represented by an endomorphism $f$ of $E$ such that
\[
 h(s \, , t) \,=\, h_0(f(s) \, , t)
\]
for all sections $s$ and $t$ of $E$. This endomorphism $f$ is positive
definite and self--adjoint with respect to $h_0$. As we pass from
$h_0$ to $h$, the extended connection form, curvature form and mean
curvature change as follows:
\begin{align}
 \label{connection form}   \theta^\varphi &=\, \theta_0^\varphi
+ \big(f^{-1} \del_0 f \, , f^{-1} [\varphi^\ast, f]\big) \, , \\
  \label{curvature form}     \Omega^\varphi &=\, \Omega_0^\varphi
+ \big([f^{-1} \del_0 f \, , \varphi] \, , \dbar(f^{-1} \partial_0 f) + [\varphi \, , f^{-1} [\varphi^\ast \, , f]] \, , \dbar (f^{-1} [\varphi^\ast \, , f])\big) \, , \\
  \label{mean curvature}     K^\varphi      &=\, K_0^\varphi +
\tr_g \dbar(f^{-1} \del_0 f) + \tr_g [\varphi \, , f^{-1} [\varphi^\ast \, , f]] \, , \\
  \label{tr(mean curvature)} \tr K^\varphi  &=\, \tr K_0^\varphi
- \tr_g \del \dbar \log (\det f) \, .
\end{align}
Here, $\theta^\varphi$, $\Omega^\varphi$ and $K^\varphi$ are
defined with respect to $h$, and $\theta_0^\varphi$,
$\Omega_0^\varphi$ and $K_0^\varphi$ are defined with respect to
$h_0$. Moreover, $(\del_0, \dbar) \,= \,(\del^{h_0}, \dbar)$
denotes the extended Hermitian connection on $(E \, , h_0)$, and
$\varphi^\ast$ is the adjoint of $\varphi$ with respect to $h_0$.

According to \eqref{mean curvature}, we need to solve the equation
\[
 K_0^\varphi - \gamma \id_E + \tr_g \dbar (f^{-1} \del_0 f) +
\tr_g [\varphi \, , f^{-1} [\varphi^\ast \, , f]] \,=\, 0 \, ,
\]
where $\gamma$ is determined by \eqref{einstein-factor}.

As done in the usual case, we will solve this equation by the continuity
method. For $\eps \in [0 \, , 1]$, consider the equation
\begin{equation}\label{equation1}
  L_\eps(f) \, := \, K_0^\varphi - \gamma \id_E + \tr_g \dbar (f^{-1}
\del_0 f) + \tr_g [\varphi \, , f^{-1} [\varphi^\ast \, , f]] + \eps
\log f = 0\, ,
\end{equation}
and let
\[
  J \, := \, \big\{ \eps \in (0 \, , 1]\;\big|\; \text{there is a
smooth solution } f \text{ to } L_\eps(f) = 0 \big\} \, .
\]
We will use the continuity method to show that $J \,=\, (0 \, ,
1]$
for any simple flat Higgs bundle $(E \, , \varphi)$, and then show
that we may take $\eps\,\longrightarrow\, 0$ to get a limit
of solutions if $(E \, ,
\varphi)$ is stable.
Note that by Proposition \ref{stable-simple}, if $(E \, , \varphi)$ is stable, then it is automatically simple.

The first step in the continuity method is to show that $1 \in J$ and so
$J$ is non--empty. The following proposition also yields, apart from
the above mentioned inclusion, an appropriately
normalized background metric $h_0$ on $E$.

\begin{proposition} \label{background-metric}
There is a smooth Hermitian metric $h_0$ on $E$ such that the
equation $L_1(f)\,=\, 0$ has a smooth solution $f_1$. The metric
$h_0$ satisfies the normalization $\tr K_0^\varphi = \nolinebreak r \gamma$,
where $r$ is the rank of $E$, and $\gamma$ is given by
\eqref{einstein-factor}.
\end{proposition}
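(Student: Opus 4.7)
The plan has two parts: first, construct a background Hermitian metric $h_0$ satisfying the trace normalization $\tr K_0^\varphi = r\gamma$ via a conformal rescaling of an arbitrary starting metric, and second, solve the nonlinear elliptic equation $L_1(f)=0$ for this $h_0$ by a continuity method in an auxiliary parameter $s\in[0,1]$ connecting $f\equiv\id_E$ to the desired solution. Start with an arbitrary smooth Hermitian metric $h_0'$ on $E$ and write $K_0'^\varphi$ for its extended mean curvature. Combining \eqref{chern-higgs}, \eqref{degree}, and \eqref{einstein-factor} gives
\[
  \int_M \bigl(r\gamma - \tr K_0'^\varphi\bigr)\,\frac{\omega_g^n}{\nu} \,=\, n\deg_g E - r\gamma \int_M \frac{\omega_g^n}{\nu} \,=\, 0.
\]
Hence the affine Gauduchon Hodge theory of \cite{Lo09}, under which the scalar operator $\tr_g\del\dbar$ on smooth real functions is self-adjoint elliptic with kernel and cokernel both equal to the constants (the cokernel identified via $\rho\mapsto\int_M\rho\,\omega_g^n/\nu$), produces a smooth real $u$ solving $\tr_g\del\dbar u = r\gamma - \tr K_0'^\varphi$. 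Setting $h_0 := e^{-u/r}h_0'$, which corresponds to the endomorphism $f=e^{-u/r}\id_E$ with $\log\det f=-u$, formula \eqref{tr(mean curvature)} gives $\tr K_0^\varphi = \tr K_0'^\varphi + \tr_g\del\dbar u = r\gamma$, as required.

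For the second step, I would introduce the family
\[
  L_1^s(f) \,:=\, s\,(K_0^\varphi - \gamma\,\id_E) + \tr_g\dbar(f^{-1}\del_0 f) + \tr_g[\varphi \, , f^{-1}[\varphi^\ast \, , f]] + \log f, \qquad s\in[0,1],
\]
so that $L_1^0(\id_E)=0$ (every derivative and commutator term vanishes and $\log\id_E=0$) while $L_1^1 = L_1$. Let $J'\subset[0,1]$ denote the set of $s$ for which $L_1^s(f)=0$ has a smooth, positive definite, self-adjoint solution. Then $0\in J'$, and the target is $1\in J'$, obtained by proving $J'$ is both open and closed. Openness follows from the implicit function theorem on an appropriate Hölder space: linearizing $L_1^s$ at a solution $f$ yields a second-order elliptic operator whose zeroth-order part contains a strictly positive contribution coming from the derivative of $\log$ at $f$, hence is invertible.

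The main obstacle is closedness of $J'$, which requires uniform \emph{a priori} $C^{k,\alpha}$ estimates on $f$ along the family. Two observations will drive it. First, tracing $L_1^s(f)=0$, using $\tr K_0^\varphi=r\gamma$, $\tr\log f=\log\det f$, and \eqref{tr(mean curvature)}, reduces the scalar part to $\log\det f = \tr_g\del\dbar\log\det f$; the maximum principle forces $\det f\equiv 1$ for every $s\in J'$. Second, the term $\log f$ in $L_1^s$ carries coefficient $1$ independent of $s$ and acts as a confining potential: at a point where the largest eigenvalue of the self-adjoint endomorphism $\log f$ is attained, the elliptic derivative term $\tr_g\dbar(f^{-1}\del_0 f)$ carries the wrong sign to balance $\log f$, and analogously for the smallest eigenvalue, yielding a uniform $C^0$ bound on the eigenvalues of $\log f$. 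Schauder bootstrap then produces all higher-order estimates. The technical heart of the argument, and the principal difficulty, is implementing this eigenvalue maximum principle cleanly in the presence of the non-scalar Higgs commutator $\tr_g[\varphi\, ,f^{-1}[\varphi^\ast,f]]$, which does not by itself possess a sign at an extremum; handling it requires Simpson-type care to isolate the coercive contribution of $+\log f$ from the higher-order commutator terms.
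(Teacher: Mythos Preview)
Your two-step plan---normalize via conformal rescaling to achieve $\tr K_0^\varphi = r\gamma$, then run a continuity method in an auxiliary parameter $s$ with the $\log f$ term kept at full strength---is exactly the strategy the paper invokes by citing \cite[Proposition~7]{Lo09}; the paper's entire proof is the observation that $\tr K^\varphi = \tr K$, so that the normalization step is literally unchanged from the non-Higgs case and the remaining estimates from \cite{Lo09} go through with the Higgs terms present.

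Your flagged ``principal difficulty'', however, is overstated: the Higgs commutator does possess a useful sign. Pairing $L_1^s(f)=0$ with $\log f$ in the $h_0$ inner product, the eigenvalue computation carried out later in the paper (proof of Lemma~\ref{lemma2}, specifically \eqref{shiq}) gives
\[
  h_0\bigl(\tr_g[\varphi,\, f^{-1}[\varphi^\ast, f]],\, \log f\bigr)
  \,=\, \sum_{\alpha,\beta}\bigl(\exp(\lambda_\alpha-\lambda_\beta)-1\bigr)(\lambda_\alpha-\lambda_\beta)\,|\varphi^\alpha_\beta|^2 \,\geq\, 0,
\]
since $x(e^x-1)\geq 0$ for all real $x$. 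So the Higgs term \emph{helps} the a~priori estimate: combined with the standard inequality for $h_0(\tr_g\dbar(f^{-1}\del_0 f),\log f)$ from \cite{LT95} one obtains
\[
  -\tfrac{1}{2}\,\tr_g\del\dbar|\log f|^2 + |\log f|^2 \,\leq\, s\,|K_0^\varphi-\gamma\,\id_E|\cdot|\log f| \,\leq\, C\,|\log f|,
\]
and the maximum principle yields $\max_M|\log f|\leq C$ uniformly in $s$. This replaces your proposed eigenvalue maximum principle (which is delicate because the top eigenvalue is not smooth) and requires no Simpson-type isolation argument.
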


\begin{proof}
As we have $\tr K^\varphi\,=\,\tr K$ for the extended mean
curvature of any Hermitian metric on $E$, the proof of \cite[Proposition 7]{Lo09} also works for Higgs bundles.
\end{proof}

So we choose $h_0$ according to Proposition
\ref{background-metric} and obtain the following:

\begin{corollary} \label{initial-solution}
The inclusion $1 \,\in\, J$ holds.
\end{corollary}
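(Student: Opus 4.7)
The corollary is essentially an immediate restatement of Proposition \ref{background-metric}. The plan is simply to unravel the definition of $J$: by Proposition \ref{background-metric}, we have chosen the background Hermitian metric $h_0$ on $E$ in such a way that the equation $L_1(f) = 0$ admits a smooth solution $f_1$. Since the operator $L_\eps$ in \eqref{equation1} was defined in terms of this fixed $h_0$, and since $1 \in (0,1]$, the existence of such an $f_1$ means precisely that $1$ lies in the set
\[
J = \bigl\{ \eps \in (0,1] \;\big|\; \text{there is a smooth solution } f \text{ to } L_\eps(f) = 0 \bigr\} \, .
\]

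There is no real obstacle here; the work of establishing that $1 \in J$ has been packaged into Proposition \ref{background-metric}, whose proof in turn relies on the fact that $\tr K^\varphi = \tr K$ (so the Higgs field plays no role in the argument) and hence reduces to \cite[Proposition 7]{Lo09}. The normalization $\tr K_0^\varphi = r\gamma$ in Proposition \ref{background-metric} ensures that $L_1(f_1) = 0$ is a consistent equation, since taking the trace of \eqref{equation1} and integrating against $\omega_g^{n-1}/\nu$ over $M$ must yield zero by the Gauduchon condition together with \eqref{einstein-factor}.

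So the proof amounts to a single sentence appealing to Proposition \ref{background-metric}. The real content — existence of a suitable $h_0$ and $f_1$ — has been absorbed into the preceding proposition, and the corollary merely records that this furnishes the starting point of the continuity method.
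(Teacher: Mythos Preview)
Your proposal is correct and matches the paper's approach exactly: the paper presents this corollary without a separate proof, treating it as an immediate consequence of Proposition~\ref{background-metric} and the choice of $h_0$ made just before the corollary. Your unraveling of the definition of $J$ is precisely the intended (one-line) argument.
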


\subsection{Openness of $J$}

Let $\Herm(E \, , h_0)$ be the real vector bundle of endomorphisms of
$E$ which are self--adjoint with respect to $h_0$. For any
Hermitian metric $h$ on $E$, we know that
$[\varphi \, , \varphi^\ast]$ is anti--self--adjoint.
Therefore, as in \cite[Lemma 3.2.3]{LT95}, for any $f
\,\in\, \Herm(E \, , h_0)$, we have
\begin{equation}\label{Lef}
  \widehat L(\eps \, , f) \, := \, f L_\eps(f) = f K^\varphi - \gamma f + \eps f \log f \, \in \, \Herm(E \, , h_0) \, .
\end{equation}

Let $1 < p < \infty$, and let $k$ be a sufficiently large
integer.

Assume that $\eps \,\in\, J$, meaning there is a smooth solution
$f_\eps$ to $L_\eps(f) \,=\, 0$, or equivalently $\widehat L(\eps
\, , f) \,=\, 0$. We will use the implicit function theorem to
show that there is some $\delta\,> \,0$ such that for every
$\eps'\,\in\, (\eps - \delta \, , \eps + \delta)$, there is a
solution $f_{\eps'}$ to $\widehat L(\eps' \, , f) \,=\, 0$
lying in $L_k^p \Herm(E \, , h_0)$. By choosing $k$ large
enough, it then follows that each $f_{\eps'}$ is smooth. Thus
$(\eps - \delta \, , \eps + \delta) \cap (0 \, , 1] \,\subset\,
J$, implying that $J$ is open.

In order to be able to apply the implicit function theorem, we
have to show that
\begin{equation}\label{dXi}
  \Xi \, := \, \frac{\delta}{\delta f} \, \widehat L(\eps \, ,
f)\,:\, L_k^p \Herm(E \, , h_0)\,\longrightarrow\, L_{k-2}^p
\Herm(E \, , h_0)
\end{equation}
is an isomorphism of Banach spaces. For $\phi\,\in\, \Herm(E \, ,
h_0)$, the Higgs field $\varphi$ does not contribute any
derivatives of $\phi$ to $\Xi(\phi)$. So the following lemma from
\cite{Lo09} is still valid for Higgs bundles (see \cite[Lemma
9]{Lo09}):

\begin{lemma}
The linear operator $\Xi$ in \eqref{dXi} is elliptic Fredholm of index
$0$.
\end{lemma}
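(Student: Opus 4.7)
The plan is to reduce the claim to the non-Higgs statement \cite[Lemma 9]{Lo09} by showing that $\varphi$ only affects $\Xi$ through zeroth-order terms, so the symbol is unchanged.

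First, I would linearize $\widehat L(\eps,f) = f K^\varphi - \gamma f + \eps f \log f$ at a solution $f$ in the direction $\phi \in \Herm(E,h_0)$. Using \eqref{mean curvature}, $f K^\varphi$ expands as
\[
 f K_0^\varphi + f\,\tr_g \dbar(f^{-1} \del_0 f) + f\,\tr_g[\varphi, f^{-1}[\varphi^\ast, f]]\, .
\]
The first term, $-\gamma f$, and $\eps f \log f$ depend on $f$ only algebraically, so their linearizations are zeroth-order in $\phi$. The third term is also purely algebraic in $f$: its linearization produces brackets with $\varphi$ and $\varphi^\ast$ applied to $\phi$ and to $[\varphi^\ast,\phi]$, but no derivatives of $\phi$. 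Only the middle term can contribute derivatives: its linearization has principal part $f \,\tr_g \dbar \del_0(f^{-1}\phi)$, which after multiplication by the fixed endomorphism $f$ and pre-composition with $f^{-1}$ has the same principal symbol as the scalar Laplacian-type operator $\tr_g \dbar \del_0$ acting on sections of $\Herm(E,h_0)$, up to a positive definite endomorphism factor.

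Second, I would read off the principal symbol: it is $\sigma_\Xi(x,\xi) = c(x) |\xi|_g^2 \cdot \id_{\Herm(E,h_0)}$ for a positive function $c$, exactly as in the non-Higgs case treated in \cite[Lemma 9]{Lo09}. In particular $\Xi$ is elliptic. Because $M$ is compact, elliptic regularity on Sobolev spaces then yields that $\Xi\colon L^p_k\Herm(E,h_0) \to L^p_{k-2}\Herm(E,h_0)$ is Fredholm, and that solutions in $L^p_k$ for large $k$ are smooth, which is the regularity needed for the openness argument.

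Finally, the Fredholm index is zero. The cleanest route is a homotopy argument: the family of operators obtained by scaling $\varphi \mapsto s\varphi$ for $s \in [0,1]$ and simultaneously scaling $\eps$ and $f$ to reference values consists of elliptic operators (each with the same principal symbol $c(x)|\xi|_g^2$) depending continuously on $s$, so by homotopy invariance of the Fredholm index the index of $\Xi$ equals the index of the corresponding operator in the non-Higgs case, which is $0$ by \cite[Lemma 9]{Lo09}. Alternatively, the principal symbol is formally self-adjoint on the bundle $\Herm(E,h_0)$, so $\Xi$ differs from a formally self-adjoint elliptic operator by a zeroth-order term and hence has index $0$. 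The only thing one really has to check is that no hidden derivatives of $\phi$ sneak in through the $\varphi$-dependent terms, and this is immediate from the algebraic form of the commutators in \eqref{mean curvature}; this is precisely the remark made in the paper before the statement of the lemma, so no further obstacle arises.
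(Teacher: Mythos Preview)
Your proposal is correct and follows exactly the paper's approach: the paper simply observes (in the sentence preceding the lemma) that the Higgs field $\varphi$ contributes no derivatives of $\phi$ to $\Xi(\phi)$, so the principal symbol is unchanged and \cite[Lemma~9]{Lo09} applies verbatim. Your write-up supplies more detail---the explicit linearization, the symbol computation, and two arguments for index zero---but the key idea is identical, and the paper offers no further proof beyond that one-sentence reduction.
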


Consequently, in order to be able to apply the implicit function
theorem, it is enough to show that $\Xi$ is injective.

As in \cite[p.\ 116]{Lo09}, for an endomorphism $f$ of $E$ which is positive definite and self--adjoint with respect to $h_0$, define
\[
  \del_0^f \, := \, \Ad f^{-\frac{1}{2}} \circ \del_0 \circ \Ad f^{\frac{1}{2}} \quad \text{and} \quad
  \dbar^f  \, := \, \Ad f^{\frac{1}{2}}  \circ \dbar  \circ \Ad f^{-\frac{1}{2}}
\]
and also
\begin{equation}\label{vpf}
  \varphi^f \, := \, (\Ad f^{\frac{1}{2}})(\varphi) \, ,
\end{equation}
where
\[
  (\Ad s)(\psi) \, := \, s \circ \psi \circ s^{-1}
\]
for an automorphism $s$ and an endomorphism $\psi$ of $E$.

\begin{proposition} \label{estimate1}
Let $\alpha \in \RR$ and $\eps \in (0 \, , 1]$. Let $f$ be an
endomorphism of $E$ which is positive definite and self--adjoint
with respect to $h_0$, and let $\phi \in \Herm(E \, , h_0)$. Assume that
$\widehat L(\eps \, , f) \,=\, 0$ (see \eqref{Lef}) and
\begin{equation} \label{assumption}
  \frac{\delta}{\delta f} \, \widehat L(\eps \, , f)(\phi) + \alpha f
\log f \,=\, \Xi(\phi) + \alpha f \log f \,=\, 0 \, ,
\end{equation}
where $\Xi$ is defined in \eqref{dXi}. Then for $\eta \,:=\,
f^{-\frac{1}{2}} \circ \phi \circ f^{-\frac{1}{2}}$, we have
\[
  - \tr_g \del \dbar |\eta|^2 + 2 \eps |\eta|^2 + |\del_0^f
\eta|^2 + |\dbar^f \eta|^2 + 2 \big|[\varphi^f \, , \eta]\big|^2
\,\leq\, - 2 \alpha h_0(\log f \, , \eta) \, .
\]
\end{proposition}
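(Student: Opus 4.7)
The plan is to regard the hypothesis \eqref{assumption} as a linearized second-order elliptic equation in the endomorphism $\eta = f^{-1/2}\phi f^{-1/2}$, and then to combine it with a pointwise Weitzenb\"ock identity for $|\eta|^2$. Throughout, the argument is a Higgs-field extension of the non-Higgs Bochner calculation in \cite[p.~116]{Lo09}; the genuinely new input is the linearization of the commutator term $\tr_g[\varphi,f^{-1}[\varphi^\ast,f]]$ in \eqref{mean curvature}.

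First I would reduce the hypothesis to a linear equation. Since $f$ is invertible and $\widehat L(\eps,f) = f\,L_\eps(f)$, the assumption $\widehat L(\eps,f)=0$ forces $L_\eps(f) = 0$, and differentiation at $f$ in direction $\phi$ yields $\Xi(\phi) = f\cdot \delta L_\eps[\phi]$. Hence \eqref{assumption} is equivalent to $\delta L_\eps[\phi] + \alpha\log f = 0$, which by \eqref{mean curvature} reads
\[
\delta K^\varphi[\phi] \;+\; \eps\cdot \delta(\log f)[\phi] \;+\; \alpha\log f \;=\; 0,
\]
where $K^\varphi$ denotes the extended mean curvature of $h(\cdot,\cdot) := h_0(f\cdot,\cdot)$.

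Next I would conjugate by $f^{1/2}$, expressing everything in terms of $\eta$ and of the operators $\del_0^f, \dbar^f, \varphi^f$ from \eqref{vpf}. A direct computation, the affine Higgs analogue of the one in \cite[p.~116]{Lo09}, gives
\[
f^{1/2}\,\delta K^\varphi[\phi]\,f^{1/2} \;=\; \tr_g\dbar^f\del_0^f\eta \;+\; \tr_g\bigl[\varphi^f,[\varphi^{f\ast},\eta]\bigr],
\]
the first summand being identical to the one in \cite{Lo09} and the second coming from the linearization of $\tr_g[\varphi,f^{-1}[\varphi^\ast,f]]$. The key Higgs identity is
\[
h_0\!\bigl(\tr_g[\varphi^f,[\varphi^{f\ast},\eta]],\eta\bigr) \;=\; |[\varphi^f,\eta]|^2,
\]
which follows from cyclicity of the trace together with $[\varphi^f,\eta]^\ast = -[\varphi^{f\ast},\eta]$ (valid since $\eta$ is $h_0$-self-adjoint).

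Finally, since $\eta$ is self-adjoint we have $|\eta|^2 = \tr(\eta^2)$, and applying the Leibniz rule for the $h_0$-Chern connection twice, absorbing the $f$-deformations into $\del_0^f,\dbar^f$, gives the pointwise Weitzenb\"ock identity
\[
\tr_g\del\dbar|\eta|^2 \;=\; 2\,h_0\!\bigl(\tr_g\del_0^f\dbar^f\eta,\eta\bigr) + |\del_0^f\eta|^2 + |\dbar^f\eta|^2.
\]
Pairing the equation of Step 1 with $\eta$ in the conjugated frame, using the above Higgs identity, and bounding the $\eps\cdot\delta(\log f)[\phi]$ contribution from below by $2\eps|\eta|^2$ via the operator convexity of $x\log x$, then multiplying by $-2$, yields the claimed inequality. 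The algebraically delicate point is the second step: verifying that after substituting $\eta=f^{-1/2}\phi f^{-1/2}$ the second-order derivatives of $\phi$ combine with the first-order derivatives of $f$ to reproduce precisely $\del_0^f,\dbar^f$ and $\varphi^f$ acting on $\eta$. While \cite{Lo09} covers the Chern part, the Higgs commutator contribution is genuinely new, though structurally parallel to Simpson's calculation \cite{Si88}.
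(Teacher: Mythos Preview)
Your approach is essentially identical to the paper's: reduce the hypothesis to $\delta L_\eps[\phi]+\alpha\log f=0$, conjugate by $\Ad(f^{1/2})$ to obtain $P^f\eta+\tr_g[\varphi^f,[(\varphi^f)^\ast,\eta]]+\eps\Phi=-\alpha\log f$ with $\Phi=f^{1/2}\,\delta(\log f)[\phi]\,f^{-1/2}$, pair with $\eta$, and combine the key Higgs identity $h_0(\tr_g[\varphi^f,[(\varphi^f)^\ast,\eta]],\eta)+h_0(\eta,\ldots)=2|[\varphi^f,\eta]|^2$ with the estimates from \cite{LT95}. Two small slips to fix when you write it up: the conjugation should be $f^{1/2}(\cdot)f^{-1/2}$, not $f^{1/2}(\cdot)f^{1/2}$, and the Weitzenb\"ock relation is an \emph{inequality} (the cross-terms arising from the $f$-deformation of $\del_0,\dbar$ are absorbed in the \cite{LT95} estimates), not an exact identity.
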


\begin{proof}
By definition of $\widehat L$, we have
\[
  \Xi(\phi)\,=\, \phi \circ L_\eps(f) + f \circ
\frac{\delta}{\delta f} \, L_\eps(f)(\phi) \, .
\]
The first term vanishes because $\widehat L(\eps \, , f)\,=\, 0$.
{}From \eqref{assumption} it follows that
\[
\frac{\delta}{\delta f}\, L_\eps(f)(\phi)\,=\, -\alpha \log f\, .
\]
The left--hand side can be computed as in \cite[proof of Proposition 3.2.5]{LT95}. The additional contribution due to the Higgs field is as follows:
\begin{align*}
  \left. \frac{d}{dt} \, \tr_g [\varphi \, , (f + t \phi)^{-1} [\varphi^\ast \, , f + t \phi]] \right|_{t=0}
  &=\, \tr_g [\varphi \, , [f^{-1} \varphi^\ast f \, , f^{-1}
\phi]] \\
  &=\, f^{-\frac{1}{2}} \circ \tr_g [\varphi^f \, ,
[(\varphi^f)^\ast \, , \eta]] \circ f^{\frac{1}{2}} \, .
\end{align*}
Following \cite{LT95}, we write
\[
  P^f \, := \, \tr_g \dbar^f \del_0^f \quad \text{and} \quad
 \Phi \, := \, f^{\frac{1}{2}} \circ \frac{\delta}{\delta f} \,
(\log f)(\phi) \circ f^{-\frac{1}{2}}\, ,
\]
and obtain
\[
 P^f(\eta) + \tr_g [\varphi^f \, , [(\varphi^f)^\ast \, , \eta]]
+ \eps \Phi \,=\, - \alpha \log f \, .
\]
We compute
\[
  \tr_g \big(h_0\big([\varphi^f \, , [(\varphi^f)^\ast \, , \eta]] \, , \eta\big)
  + h_0\big(\eta \, , [\varphi^f \, , [(\varphi^f)^\ast \, , \eta]]^\ast\big)\big)
  \,=\, 2 \big|[\varphi^f\, , \eta]\big|^2\, ,
\]
and together with the estimates in \cite{LT95}, the proposition
follows.
\end{proof}

\begin{proposition} \label{openness}
The subset $J$ is open.
\end{proposition}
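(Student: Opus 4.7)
The plan is to apply the implicit function theorem to the nonlinear map $\widehat L(\eps', \cdot) \colon L_k^p \Herm(E, h_0) \to L_{k-2}^p \Herm(E, h_0)$ between the Banach spaces already introduced. Fix $\eps \in J$ together with a smooth solution $f_\eps$ of $\widehat L(\eps, \cdot) = 0$. Since the preceding lemma states that $\Xi$ is elliptic Fredholm of index $0$, showing that $\Xi$ is an isomorphism reduces to showing injectivity. Once injectivity is established, the implicit function theorem produces $L_k^p$ solutions $f_{\eps'}$ of $\widehat L(\eps', \cdot) = 0$ for every $\eps'$ in a neighborhood of $\eps$, and standard elliptic regularity applied to the quasilinear elliptic equation $L_{\eps'}(f_{\eps'}) = 0$ (for $k$ chosen large at the outset) upgrades each $f_{\eps'}$ to a smooth section; intersecting this neighborhood with $(0, 1]$ yields the desired open neighborhood of $\eps$ in $J$.

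For the injectivity step, suppose $\phi \in L_k^p \Herm(E, h_0)$ satisfies $\Xi(\phi) = 0$. I would invoke Proposition \ref{estimate1} with $\alpha = 0$ and $f = f_\eps$: writing $\eta := f_\eps^{-\frac{1}{2}} \circ \phi \circ f_\eps^{-\frac{1}{2}}$, the pointwise inequality
\[
 -\tr_g \del \dbar |\eta|^2 + 2 \eps |\eta|^2 + |\del_0^f \eta|^2 + |\dbar^f \eta|^2 + 2 \big|[\varphi^f, \eta]\big|^2 \leq 0
\]
holds. Integrating over $M$ against $\omega_g^n / \nu$, the first term drops out: on a compact affine Gauduchon manifold, $\del \dbar \omega_g^{n-1} = 0$ combined with integration by parts (as established in \cite{Lo09}) forces $\int_M \tr_g \del \dbar u \cdot \omega_g^n / \nu = 0$ for every smooth function $u$. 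What remains is a sum of pointwise non-negative terms, and since $\eps > 0$, the contribution $2 \eps \int_M |\eta|^2 \, \omega_g^n / \nu$ forces $\eta \equiv 0$, hence $\phi \equiv 0$.

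The main conceptual work is thus already packaged inside Proposition \ref{estimate1}; what remains is fairly routine. The step I would be most careful about is verifying that the pointwise identity of Proposition \ref{estimate1} is legitimate for $\phi$ only assumed to lie in $L_k^p$; this is arranged by taking $p$ and $k$ large, so that Sobolev embedding places $\phi$ in a sufficiently regular H\"older class, after which one may compute pointwise as if $\phi$ were smooth. The elliptic regularity bootstrap that promotes the $L_k^p$ solution $f_{\eps'}$ produced by the implicit function theorem to smoothness is equally standard for a quasilinear equation of this type, so no serious obstacle is anticipated beyond the injectivity estimate itself.
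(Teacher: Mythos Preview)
Your proposal is correct and follows essentially the same approach as the paper: reduce openness to injectivity of $\Xi$ via the index--zero Fredholm lemma, then apply Proposition~\ref{estimate1} with $\alpha = 0$ to conclude $\eta = 0$. The only difference is in the final step, where the paper applies the maximum principle directly to the pointwise inequality $-\tr_g \del\dbar|\eta|^2 + 2\eps|\eta|^2 \leq 0$ rather than integrating and invoking the Gauduchon condition; both arguments are valid in this setting.
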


\begin{proof}
We show that $\Xi$ is injective. Take any $\phi$ such
that $\Xi(\phi) \,=\, 0$. Setting $\alpha\, =\, 0$ in
Proposition \ref{estimate1} we see that
\[
  - \tr_g \del \dbar |\eta|^2 + 2 \eps |\eta|^2 \leq 0 \, .
\]
Therefore, the maximum principle gives that $|\eta|^2 \,=\, 0$.
So $\phi \,=\, 0$, proving that $\Xi$ is injective.
As explained before, this completes the proof of Proposition
\ref{openness}.
\end{proof}

\subsection{Closedness of $J$}

As in \cite[Lemma 12]{Lo09}, we have the following:
\begin{lemma} \label{det}
Let $f$ be an endomorphism of $E$ which is positive definite
and self--adjoint with respect to $h_0$. If $L_\eps(f) \,= \,0$
(defined in \eqref{equation1}) for some
$\eps \,>\, 0$, then $\det f \,=\, 1$.
\end{lemma}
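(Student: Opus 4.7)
The plan is to reduce this to the non-Higgs case (\cite[Lemma 12]{Lo09}) by taking the fibrewise trace of the equation $L_\eps(f) = 0$. The key observation is that the only genuinely new contribution coming from the Higgs field, namely $\tr_g[\varphi \, , f^{-1}[\varphi^\ast \, , f]]$, is pointwise trace-free, since the trace of a commutator vanishes. Thus $\varphi$ plays no role at the level of $\det f$.

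Concretely, I would apply $\tr$ to \eqref{equation1}. The constant term $\tr(K_0^\varphi - \gamma\id_E)$ vanishes by the normalization $\tr K_0^\varphi = r\gamma$ from Proposition \ref{background-metric}. The commutator term is annihilated as noted. The last term gives $\tr(\eps \log f) = \eps\log\det f$, since $f$ is positive definite and self-adjoint with respect to $h_0$. For the remaining second-order term I would use the standard identity
\[
  \tr(f^{-1}\del_0 f) \,=\, \del\log\det f \, ,
\]
which follows from $\tr(f^{-1}df) = d\log\det f$ together with the fact that the Chern-connection correction $\del_0 - \del$ acts on $\End E$ by a commutator $[\theta_0 \, , \cdot\,]$ of trace zero. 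Since $\tr$ commutes with $\dbar$ and $\tr_g$, setting $u := \log\det f$ the traced equation becomes
\[
  \tr_g \dbar \del u + \eps u \,=\, 0 \, .
\]

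Finally I would apply the maximum principle. Unwinding the definitions of $\del$, $\dbar$ and $\omega_g$ in affine coordinates shows that $\tr_g\dbar\del$ agrees, up to a negative multiplicative constant, with the $g$-Laplacian $\sum g^{ij}\partial_i\partial_j$ on functions; in particular it is elliptic and at an interior maximum $p$ of $u$ one has $\tr_g\dbar\del u(p) \geq 0$. Combined with $\eps > 0$, the displayed equation forces $u(p) \leq 0$ at any maximum, and symmetrically $u \geq 0$ at any minimum. Hence $u \equiv 0$, i.e., $\det f \equiv 1$.

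The only thing requiring a little care is the sign check establishing that $\tr_g\dbar\del$ is the correct sign of the Laplacian so the maximum principle applies; this is a routine local calculation. There is no substantive obstacle beyond this: the Higgs-theoretic terms drop out cleanly under the trace, and the argument is formally identical to \cite[Lemma 12]{Lo09}.
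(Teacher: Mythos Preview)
Your proposal is correct and matches the paper's approach: the paper simply cites \cite[Lemma~12]{Lo09} without further detail, and your argument is precisely a reconstruction of that lemma together with the observation that the Higgs contribution $\tr_g[\varphi,f^{-1}[\varphi^\ast,f]]$ is trace-free and hence drops out. The sign check you flag is indeed routine and goes exactly as you indicate.
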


Let
\begin{equation}\label{14}
f \,=\, f_\eps
\end{equation}
be the family of solutions constructed for $\eps \in (\eps_0 \, , 1]$
in Corollary \ref{initial-solution} and Proposition \ref{openness}. Define
\begin{equation}\label{15}
  m \, := \, m_\eps \, := \, \max_M |\log f_\eps| \, , \quad
  \phi \, := \, \phi_\eps \, := \, \frac{d f_\eps}{d \eps} \, , \quad
  \eta \, := \, \eta_\eps \, := \, f_\eps^{-\frac{1}{2}} \circ \phi_\eps \circ f_\eps^{-\frac{1}{2}} \, .
\end{equation}

As in \cite[Lemma 13]{Lo09}, Lemma \ref{det} implies the
following:

\begin{lemma} \label{trace}
For $\eta_\eps$ in \eqref{15},
$$\tr \eta_\eps \,=\, 0\, .$$
\end{lemma}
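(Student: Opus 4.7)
The plan is to exploit the determinant normalization supplied by Lemma \ref{det}. By that lemma, every solution $f_\eps$ of $L_\eps(f)=0$ with $\eps>0$ satisfies $\det f_\eps = 1$, so $\log\det f_\eps \equiv 0$ as a function on $M$ and as a function of $\eps$. The family $f_\eps$ is smooth in $\eps$ by construction (it comes from the implicit function theorem argument of Proposition \ref{openness}, which gives $f_{\eps'}$ as a smooth function of $\eps'$ near any $\eps\in J$), so differentiating this identity in $\eps$ is legitimate.

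Concretely, differentiating $\det f_\eps = 1$ in $\eps$ using Jacobi's formula gives
\[
0 \,=\, \frac{d}{d\eps}\det f_\eps \,=\, (\det f_\eps)\cdot\tr\!\left(f_\eps^{-1}\circ\frac{df_\eps}{d\eps}\right) \,=\, \tr(f_\eps^{-1}\circ\phi_\eps)\,,
\]
where $\phi_\eps = df_\eps/d\eps$ as in \eqref{15}.

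It remains to identify this with $\tr\eta_\eps$. Since $f_\eps$ is positive definite and self--adjoint with respect to $h_0$, the square root $f_\eps^{1/2}$ is well defined, commutes with $f_\eps^{-1/2}$, and the trace on endomorphisms of $E$ is cyclic. Therefore
\[
\tr\eta_\eps \,=\, \tr\!\left(f_\eps^{-\frac{1}{2}}\circ\phi_\eps\circ f_\eps^{-\frac{1}{2}}\right) \,=\, \tr(\phi_\eps\circ f_\eps^{-1}) \,=\, \tr(f_\eps^{-1}\circ\phi_\eps) \,=\, 0\,,
\]
which is the claim. There is no real obstacle here; the content of the lemma lies entirely in Lemma \ref{det}, and the present statement is just its infinitesimal form, transferred from $f_\eps^{-1}\phi_\eps$ to the symmetrized expression $\eta_\eps$ by the cyclic property of the trace.
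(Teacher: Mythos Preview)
Your proof is correct and follows exactly the approach indicated by the paper, which simply cites \cite[Lemma 13]{Lo09} and Lemma \ref{det}: differentiate the identity $\det f_\eps = 1$ in $\eps$ via Jacobi's formula to get $\tr(f_\eps^{-1}\phi_\eps)=0$, then use cyclicity of the trace to rewrite this as $\tr\eta_\eps = 0$.
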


On $M$, consider the $L^2$ inner products on $\A^{p,q}(\End E)$
given by $h_0$, $g$ and the volume form $\frac{\omega_g^n}{\nu}$.
Then we have the following proposition.

\begin{proposition} \label{estimate2}
Let $(E \, , \varphi)$ be a simple flat Higgs bundle over $M$.
Let $f$ be as in \eqref{14}.
Then there is a constant $C(m)$ depending only on $M$, $g$,
$\nu$, $\varphi$, $h_0$, and $m\,=\, m_\eps$ such that for $\eta
\,=\,\eta_\eps$, we have
\[
 ||\dbar^f \eta||_{L^2}^2 + \big\lVert [\varphi^f \, , \eta]
\big\rVert_{L^2}^2 \,\geq\, C(m) ||\eta||_{L^2}^2 \, ,
\]
where $\varphi^f$ is defined in \eqref{vpf}.
\end{proposition}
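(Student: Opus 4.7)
The plan is to argue by contradiction. If the proposition fails, there is a sequence $\eps_i \in (\eps_0, 1]$ with solutions $f_i := f_{\eps_i}$ satisfying $\max_M |\log f_i| \leq m$ and corresponding $\eta_i := \eta_{\eps_i}$ such that, after normalizing $||\eta_i||_{L^2} = 1$, we have
\[
  ||\dbar^{f_i} \eta_i||_{L^2}^2 + ||[\varphi^{f_i}, \eta_i]||_{L^2}^2 \;\longrightarrow\; 0 \quad \text{as } i \to \infty.
\]
Note that $\tr \eta_i = 0$ by Lemma \ref{trace}. The goal is to extract a smooth limit $\eta_\infty$ such that $\xi_\infty := \Ad f_\infty^{-1/2}(\eta_\infty)$ is a locally constant section of $\End E$ with $[\varphi, \xi_\infty] = 0$. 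Simplicity of $(E, \varphi)$ will then force $\xi_\infty = c \, \id_E$, and the trace condition forces $c = 0$, contradicting $||\eta_\infty||_{L^2} = 1$.

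First I would upgrade the $L^\infty$ bound on $f_i$ to uniform $C^\infty$ bounds. The equation $L_{\eps_i}(f_i) = 0$ is second order elliptic in $f$, with principal part $\tr_g \dbar(f^{-1} \del_0 f)$, and the Higgs--commutator term in $L_{\eps_i}$ is zeroth order in $f$; so a standard elliptic bootstrap (proceeding exactly as in \cite{Lo09}) yields uniform $C^k$ bounds on $f_i$ for every $k$, and a subsequence converges in $C^\infty$ to a positive definite $h_0$--self-adjoint $f_\infty$. Consequently $\varphi^{f_i} \to \varphi^{f_\infty}$ and $\dbar^{f_i} \to \dbar^{f_\infty}$ smoothly. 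Next, writing
\[
  \dbar^{f_i} \eta_i = \dbar \eta_i + [A_i, \eta_i], \qquad A_i := f_i^{1/2} \, \dbar f_i^{-1/2},
\]
with $A_i$ uniformly bounded in $C^\infty$, the hypothesis gives a uniform $L^2$--bound on $\dbar \eta_i$; since $\eta_i = f_i^{-1/2} \phi_i f_i^{-1/2}$ with $\phi_i \in \Herm(E, h_0)$ is itself $h_0$--self-adjoint, the $h_0$--adjoint relation between $\del$ and $\dbar$ on self-adjoint endomorphisms yields a matching bound on $\del \eta_i$. Hence $\eta_i$ is uniformly bounded in $L^2_1$, and Rellich compactness produces a subsequence converging strongly in $L^2$ (so $||\eta_\infty||_{L^2} = 1$) and weakly in $L^2_1$.

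Passing to the limit in the two hypotheses --- using smooth convergence of the operators and strong $L^2$--convergence of $\eta_i$ for the zeroth-order terms, and weak $L^2$--convergence of $\dbar \eta_i$ for the derivative term --- yields $\dbar^{f_\infty} \eta_\infty = 0$ and $[\varphi^{f_\infty}, \eta_\infty] = 0$ distributionally. Setting $\xi_\infty := \Ad f_\infty^{-1/2}(\eta_\infty)$ recasts these as $\dbar \xi_\infty = 0$ and $[\varphi, \xi_\infty] = 0$; elliptic regularity makes $\xi_\infty$ smooth, hence a locally constant section of $\End E$ via the dictionary between holomorphic fiber-constant sheaves on $M^\CC$ and locally constant sheaves on $M$. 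Simplicity together with $\tr \eta_\infty = 0$ then forces $\xi_\infty = 0$, the desired contradiction. The main obstacle I foresee lies in the uniform $C^\infty$ regularity step: one must verify that the Moser-type $C^\alpha$ estimate launching the bootstrap for $L_{\eps_i}(f_i) = 0$ has constants independent of $\eps_i \in (\eps_0, 1]$, so that the subsequent higher-regularity iterations, and in particular the $C^\infty$ control of the commutator coefficients $A_i$ and $\varphi^{f_i}$, proceed uniformly in $i$.
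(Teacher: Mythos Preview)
Your contradiction-and-compactness approach is genuinely different from the paper's direct argument, but it has a real gap. The step ``a standard elliptic bootstrap (proceeding exactly as in \cite{Lo09}) yields uniform $C^k$ bounds on $f_i$'' is circular: in \cite{Lo09}, and in this paper, the higher regularity of $f_\eps$ is obtained \emph{via} Proposition~\ref{estimate2}. Concretely, Propositions~\ref{estimate1} and~\ref{estimate2} together give Proposition~\ref{phi-bound} (the $C^0$ bound on $\phi_\eps$), which in turn feeds into the uniform $L^p_2$ bound on $f_\eps$ (Corollary~\ref{corollary2}). There is no independent Moser-type route in \cite{Lo09} from the $L^\infty$ bound $|\log f_i|\leq m$ to a $C^1$ bound for this matrix-valued quasilinear system. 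Without a $C^1$ bound on $f_i$ you cannot control $A_i = f_i^{1/2}\,\dbar f_i^{-1/2}$ in $L^\infty$, and your $L^2$ bound on $\dbar\eta_i$ --- hence the $L^2_1$ compactness for $\eta_i$ --- collapses. The ``main obstacle'' you flag is therefore not a technical verification but a genuine circularity.

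The paper's argument sidesteps this entirely, using no regularity of $f$ beyond the pointwise eigenvalue bound $|\log f|\leq m$. Setting $\psi := f^{-1/2}\eta f^{1/2}$, one has $\dbar^f\eta = (\Ad f^{1/2})(\dbar\psi)$ and $[\varphi^f,\eta] = (\Ad f^{1/2})([\varphi,\psi])$, so a purely algebraic pointwise estimate gives
\[
\|\dbar^f\eta\|_{L^2}^2 + \|[\varphi^f,\eta]\|_{L^2}^2 \,\geq\, C(m)\bigl(\|\dbar\psi\|_{L^2}^2 + \|[\varphi,\psi]\|_{L^2}^2\bigr)\,.
\]
The right-hand side equals $\langle L^*L\psi,\psi\rangle_{L^2}$ for the \emph{fixed} elliptic self-adjoint operator $L\chi = ([\varphi,\chi],\dbar\chi)$, whose kernel consists of constant multiples of $\id_E$ by simplicity. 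Since $\tr\psi = \tr\eta = 0$ (Lemma~\ref{trace}), $\psi$ is $L^2$-orthogonal to this kernel, and the spectral gap gives $\langle L^*L\psi,\psi\rangle_{L^2}\geq\lambda_1\|\psi\|_{L^2}^2\geq C(m)\|\eta\|_{L^2}^2$. If you try to repair your argument by passing from $\eta_i$ to $\psi_i$, you lose self-adjointness (so no automatic $\del$-bound), and the only available substitute for Rellich compactness is precisely this spectral gap for $L^*L$ --- at which point the argument has become the paper's.
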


\begin{remark}\label{rC}
Following \cite{Lo09}, henceforth $C(m)$ will always denote a
constant depending on $M$, $g$, $\nu$, $\varphi$, $h_0$ and $m$.
However, the particular constant may change with the context.
Similarly, $C$ will denote a constant depending only on the
initial data $M$, $g$, $\nu$, $\varphi$ and $h_0$, but not on
$\eps$ or $m$.
\end{remark}

\begin{proof}
Let $\psi\,:=\, f^{-\frac{1}{2}} \circ \eta \circ
f^{\frac{1}{2}}$.
Then pointwise, we have
\[
  |\dbar^f \eta|^2 + \big| [\varphi^f \, , \eta] \big|^2
  = |f^{\frac{1}{2}} \circ \dbar \psi \circ f^{-\frac{1}{2}}|^2 + \big|f^{\frac{1}{2}} \circ [\varphi \, , \psi] \circ f^{-\frac{1}{2}}\big|^2
\geq C(m) \big( |\dbar \psi|^2 + \big| [\varphi \, , \psi]
\big|^2 \big) \, .
\]
Integrating both sides of it over $M$ with respect to the volume form
$\frac{\omega_g^n}{\nu}$, we obtain
\begin{equation} \label{inequality}
  ||\dbar^f \eta||_{L^2}^2 + \big\lVert [\varphi^f \, , \eta]
\big\rVert_{L^2}^2\,\geq\, C(m) \big( ||\dbar \psi||_{L^2}^2 +
\big\lVert [\varphi \, , \psi] \big\rVert_{L^2}^2 \big) \, .
\end{equation}

The space $\A^{1,0}(\End E) \oplus \A^{0,1}(\End E)$ on $M$
corresponds to the space of $1$--forms on $M^\CC$ with values in
$\End E^\CC$. It has a natural $L^2$ inner product induced by the
$L^2$ inner products on $\A^{1,0}(\End E)$ and $\A^{0,1}(\End E)$. Consider the operator
\[
  L: \A^{0,0}(\End E) \longrightarrow \A^{1,0}(\End E) \oplus \A^{0,1}(\End E), \quad
  \chi \longmapsto \big([\varphi \, , \chi] \, , \dbar \chi\big) \, .
\]
Its adjoint with respect to the $L^2$ inner products is
\[
  L^\ast: \A^{1,0}(\End E) \oplus \A^{0,1}(\End E) \longrightarrow \A^{0,0}(\End E), \quad
  (u \, , v) \longmapsto \tr_g [\varphi^\ast \, , u] + \dbar^\ast v \, .
\]
Using this, the right--hand side in \eqref{inequality} can be written as
\[
  C(m) ||L \psi||_{L^2}^2 \,=\, C(m) \left<L^\ast L \psi \, ,
\psi\right>_{L^2} \, .
\]
The operator $L^\ast L$ is self--adjoint, and it is elliptic because
$L^\ast L \chi$ is equivalent to $\dbar^\ast \dbar \chi$ up to
zeroth--order derivatives of $\chi$. For any $\chi$ in the kernel
of $L^\ast L$, we have
\[
  0 \,=\, \left<L^\ast L \chi \, , \chi\right>_{L^2} \,=\, ||L
\chi||_{L^2}^2 \,=\, ||\dbar \chi||_{L^2}^2 + \big\lVert [\varphi \,
, \chi] \big\rVert_{L^2}^2 \, ,
\]
and so $\chi$ is a locally constant section of $\End E$
satisfying $[\varphi \, , \chi] \,=\, 0$. Since $(E \, ,
\varphi)$ is simple, it follows that the kernel of $L^\ast L$
consists only of the constant multiples of the identity
automorphism. As in
\cite[proof of Proposition 14]{Lo09}, Lemma \ref{trace} implies
that $\psi$ is $L^2$--orthogonal to the kernel of $L^\ast L$, and
hence there is a constant $\lambda_1 \,>\, 0$ (the smallest
positive eigenvalue of $L^\ast L$) such that
\[
  \left<L^\ast L \psi \, , \psi\right>_{L^2}\,\geq\, \lambda_1
||\psi||_{L^2}^2 \, .
\]
Combining this with the inequality in \eqref{inequality}, it now
follows that
\[
  ||\dbar^f \eta||_{L^2}^2 + \big\lVert [\varphi^f \, , \eta]
\big\rVert_{L^2}^2\,
\geq\, C(m) \left<L^\ast L \psi \, , \psi\right>_{L^2}
\,\geq\, C(m) ||\psi||_{L^2}^2 \,\geq\, C(m) ||\eta||_{L^2}^2 \,
. \qedhere
\]
\end{proof}

\begin{proposition} \label{phi-bound}
Let $(E \, , \varphi)$ be a simple flat Higgs bundle over $M$. Then
\[
  \max_M |\phi_\eps| \leq C(m) \, ,
\]
where $\phi_\eps$ is defined in \eqref{15}.
\end{proposition}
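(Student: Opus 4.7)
The plan is to differentiate the equation $\widehat L(\eps\, ,f_\eps)\,=\,0$ with respect to $\eps$, feed the result into Proposition \ref{estimate1} with $\alpha\,=\,1$, integrate the resulting pointwise inequality against $\omega_g^n/\nu$, and then combine Proposition \ref{estimate2} with a Moser iteration to upgrade the ensuing $L^2$ bound to a $C^0$ bound.

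First, from $\widehat L(\eps\, ,f)\,=\,fK^\varphi-\gamma f+\eps f\log f$, differentiation in $\eps$ along the family $\{f_\eps\}$ gives
\[
\Xi(\phi_\eps)+f_\eps\log f_\eps\,=\,0\, ,
\]
which is precisely the hypothesis of Proposition \ref{estimate1} with $\alpha\,=\,1$. That proposition then yields the pointwise differential inequality
\[
-\tr_g\del\dbar|\eta_\eps|^2+2\eps|\eta_\eps|^2+|\del_0^{f_\eps}\eta_\eps|^2+|\dbar^{f_\eps}\eta_\eps|^2+2\big|[\varphi^{f_\eps}\, ,\eta_\eps]\big|^2\,\leq\,-2h_0(\log f_\eps\, ,\eta_\eps)\,\leq\,2m|\eta_\eps|\, .
\]

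Next I would integrate this inequality over $M$ against the volume form $\omega_g^n/\nu$. The $\tr_g\del\dbar|\eta_\eps|^2$ contribution vanishes because
\[
\int_M\tr_g\del\dbar|\eta_\eps|^2\cdot\frac{\omega_g^n}{\nu}\,=\,n\int_M\frac{\del\dbar|\eta_\eps|^2\wedge\omega_g^{n-1}}{\nu}\,=\,0
\]
by integration by parts, using that $\nu$ is $D$-parallel and that $\del\dbar(\omega_g^{n-1})\,=\,0$ by the affine Gauduchon condition. Dropping the nonnegative terms $2\eps|\eta_\eps|^2$ and $|\del_0^{f_\eps}\eta_\eps|^2$ on the left and applying Cauchy--Schwarz on the right gives
\[
\|\dbar^{f_\eps}\eta_\eps\|_{L^2}^2+2\big\|[\varphi^{f_\eps}\, ,\eta_\eps]\big\|_{L^2}^2\,\leq\,2m\cdot\mathrm{vol}(M)^{1/2}\cdot\|\eta_\eps\|_{L^2}\, .
\]
Proposition \ref{estimate2} applies because the stable flat Higgs bundle $(E\, ,\varphi)$ is simple by Proposition \ref{stable-simple}, and it converts the left-hand side into $C(m)\|\eta_\eps\|_{L^2}^2$; absorbing one factor of $\|\eta_\eps\|_{L^2}$ on the right produces the uniform estimate $\|\eta_\eps\|_{L^2}\,\leq\,C(m)$.

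To upgrade this $L^2$ bound to the claimed $C^0$ bound, I would invoke the same Moser iteration scheme used in the non-Higgs case in \cite{Lo09}. The pointwise inequality $-\tr_g\del\dbar|\eta_\eps|^2\leq 2m|\eta_\eps|\leq m(1+|\eta_\eps|^2)$ presents $|\eta_\eps|^2$ as a weak subsolution of the elliptic operator $-\tr_g\del\dbar$, and the standard iteration—whose integration-by-parts step in the affine setting relies solely on $\del\dbar(\omega_g^{n-1})\,=\,0$ and the parallel volume form $\nu$—yields $\max_M|\eta_\eps|\,\leq\,C(m)$. Since $|\log f_\eps|\leq m$ bounds the spectrum of $f_\eps^{1/2}$ uniformly above and below on $M$, the identity $\phi_\eps\,=\,f_\eps^{1/2}\eta_\eps f_\eps^{1/2}$ converts this into the required bound $\max_M|\phi_\eps|\,\leq\,C(m)$.

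The main obstacle is the Moser iteration, as this requires checking that the classical $L^2$-to-$L^\infty$ mean value inequality for subsolutions of second-order elliptic operators carries over to the affine Gauduchon setting, where the only Stokes-type identity available is the one afforded by $\del\dbar(\omega_g^{n-1})\,=\,0$ and division by $\nu$. This was carried out for flat bundles in \cite{Lo09}, and the Higgs-field modifications enter only as additional nonnegative terms on the left of the differential inequality, so the iteration transfers to the present setting without substantive change.
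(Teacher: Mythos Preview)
Your proposal is correct and follows essentially the same route as the paper, which simply cites \cite[Proposition~16]{Lo09} together with Propositions~\ref{estimate1} and~\ref{estimate2}; you have accurately unpacked those details (differentiate in $\eps$ to obtain the $\alpha=1$ case of Proposition~\ref{estimate1}, integrate using the Gauduchon condition, apply Proposition~\ref{estimate2} for the $L^2$ bound, then Moser-iterate). One small remark: the hypothesis of the proposition is already that $(E,\varphi)$ is \emph{simple}, so your appeal to Proposition~\ref{stable-simple} is unnecessary---Proposition~\ref{estimate2} applies directly.
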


\begin{proof}
This follows as in \cite[Proposition 16]{Lo09} from
Proposition \ref{estimate1} and Proposition \ref{estimate2}.
\end{proof}

\begin{lemma} \label{lemma2}
Let $f$ be as in Proposition \ref{estimate2}. Then
\[
  - \frac{1}{2} \, \tr_g \del \dbar |\log f|^2 + \eps |\log f|^2
\,\leq\, |K_0^\varphi - \gamma \id_E| \cdot |\log f| \, ,
\]
where $K_0^\varphi$ is defined as in \eqref{Kvp} for $h_0$
(see \eqref{tr(mean curvature)}).
\end{lemma}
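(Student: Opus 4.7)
\, I intend to adapt the proof of the non-Higgs analog in \cite{Lo09} (which in turn follows the Kähler Hermitian--Einstein argument of \cite[Lemma 3.2.8]{LT95}), absorbing the new Higgs summand via a Simpson-type pointwise positivity. First, I rewrite $L_\eps(f) = 0$ as
\[
  -(K_0^\varphi - \gamma\id_E) - \eps\log f \,=\, \tr_g\dbar(f^{-1}\del_0 f) + \tr_g[\varphi, f^{-1}[\varphi^*, f]],
\]
and pair both sides with the $h_0$-self-adjoint endomorphism $\log f$ via the pointwise inner product $h_0(A,B) := \tr(AB^{*h_0})$ on $\End E$; this converts the $\eps \log f$ term into $\eps|\log f|^2$ and the left-hand side into $-h_0(K_0^\varphi - \gamma\id_E, \log f)$.

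The key technical step is the pointwise Kato-type inequality
\[
 -\tfrac{1}{2}\,\tr_g\del\dbar|\log f|^2 \,\leq\, h_0\bigl(\tr_g\dbar(f^{-1}\del_0 f),\,\log f\bigr) + h_0\bigl(\tr_g[\varphi, f^{-1}[\varphi^*, f]],\,\log f\bigr).
\]
The first summand is handled exactly as in the non-Higgs argument in \cite{Lo09}: one expands $\tr_g\del\dbar\tr((\log f)^2) = 2\tr_g\tr((\log f)\,\del_0\dbar\log f) + 2\tr_g\tr(\del_0\log f\wedge\dbar\log f)$, drops the nonnegative last term, and uses the integral formula $f^{-1}\del_0 f = \int_0^1 e^{-t\log f}(\del_0\log f)\,e^{t\log f}\,dt$ to compare $\dbar(f^{-1}\del_0 f)$ with $\del_0\dbar \log f$ modulo further nonnegative contributions. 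The second summand is nonnegative by itself.

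For the Higgs positivity, set $s := \log f$, write $\varphi = \sum_i \varphi_i\,dz^i$, and put $B_i := [s,\varphi_i]$. Using the cyclic trace identity $\tr(s[A,B]) = \tr([s,A]B)$, the representation
\[
 f^{-1}\varphi_j^* f - \varphi_j^* \,=\, -\!\int_0^1 e^{-ts}[s,\varphi_j^*]\,e^{ts}\,dt \,=\, \int_0^1 e^{-ts}B_j^*\,e^{ts}\,dt
\]
(where $[s,\varphi_j^*] = -B_j^*$ because $s$ is self-adjoint), and setting $C_i(t) := e^{ts/2}B_i\,e^{-ts/2}$, one computes
\[
  h_0\bigl(\tr_g[\varphi, f^{-1}[\varphi^*, f]], \log f\bigr) \,=\, \int_0^1 \sum_{i,j} g^{ij}\,\tr\bigl(C_i(t)\,C_j(t)^*\bigr)\,dt \,\geq\, 0,
\]
since $(g^{ij})$ is positive definite symmetric and the Gram matrix $\bigl(\tr(C_i C_j^*)\bigr)$ is Hermitian positive semi-definite, so their contraction is nonnegative.

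Combining these ingredients yields
\[
  -\tfrac{1}{2}\tr_g\del\dbar|\log f|^2 + \eps|\log f|^2 \,\leq\, -h_0(K_0^\varphi - \gamma\id_E, \log f) \,\leq\, |K_0^\varphi - \gamma\id_E|\cdot|\log f|
\]
by Cauchy--Schwarz on $\End E$. The main obstacle is the Kato step: because $f$ and its derivatives do not commute, $\tr_g\del\dbar|\log f|^2$ cannot be identified directly with any pairing involving $\dbar(f^{-1}\del_0 f)$, and one must invoke the integral representation above to obtain the inequality with the correct sign, as in the Donaldson--Uhlenbeck--Yau proofs of \cite{Si88}, \cite{LT95}, and the affine adaptation in \cite{Lo09}.
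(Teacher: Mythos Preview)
Your proof is correct and follows the same architecture as the paper: rewrite $L_\eps(f)=0$, pair with $\log f$, invoke the non-Higgs Kato inequality from \cite{LT95} (the paper cites Lemma~3.3.4(i) there rather than 3.2.8, but the content is the same), show the Higgs contribution is nonnegative, and finish by Cauchy--Schwarz.

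The one substantive difference is how you establish the Higgs positivity
\[
  h_0\big(\tr_g[\varphi,\,f^{-1}[\varphi^*,f]],\,\log f\big)\;\geq\;0.
\]
The paper diagonalizes $f=\sum_\alpha e^{\lambda_\alpha}\,e_\alpha\otimes e^\alpha$ pointwise in an $h_0$-unitary frame and reduces the claim to the elementary inequality $x(e^x-1)\geq 0$. Your route via the Duhamel formula $f^{-1}\varphi_j^* f-\varphi_j^*=\int_0^1 e^{-ts}B_j^*e^{ts}\,dt$ and the Gram-matrix positivity of $\sum_{i,j}g^{ij}\tr(C_iC_j^*)$ is equally valid and has the virtue of being frame-free. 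The paper's diagonalization method, on the other hand, is reused verbatim in the later $f^\sigma$ estimate (Proposition~\ref{estimate-destabilizing}), so it buys some economy across the argument. Either technique is standard in the Simpson--L\"ubke--Teleman literature; your proposal is a legitimate alternative to the paper's computation.
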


\begin{proof}
Since $L_\eps(f) \,=\, 0$ (see equation \eqref{equation1}),
we have
\begin{equation} \label{equation2}
  K_0^\varphi - \gamma \id_E \,=\, - \tr_g \dbar (f^{-1} \del_0
f)
-
\tr_g [\varphi \, , f^{-1} [\varphi^\ast \, , f]] - \eps \log f \, .
\end{equation}
This implies that
\begin{align*}
  |K_0^\varphi - \gamma \id_E| \cdot |\log f|
  &\geq \big|h_0(- (K_0^\varphi - \gamma \id_E) \, , \log f)\big| \\
  &\geq h_0(\tr_g \dbar (f^{-1} \del_0 f) + \eps \log f \, , \log f)
  + h_0(\tr_g [\varphi \, , f^{-1} [\varphi^\ast \, , f]] \, , \log f)
\end{align*}
if both summands on the right--hand side are real. From
\cite[proof of Lemma 3.3.4~(i)]{LT95}, we know that the first
summand is real and satisfies the condition
\[
  h_0(\tr_g \dbar (f^{-1} \del_0 f) + \eps \log f \, , \log f)
\,\geq\, - \frac{1}{2} \, \tr_g \del \dbar |\log f|^2 + \eps
|\log f|^2 \, .
\]

So to complete the proof of the proposition, it suffices to show
that
\begin{equation}\label{shiq}
h_0(\tr_g
[\varphi \, , f^{-1} [\varphi^\ast \, , f]] \, , \log f)\,\in\,
{\mathbb R}^{\geq 0}\, .
\end{equation}

The argument for it is similar to the one in \cite{LT95}. Over
each point of $M$, we can write
\[
f \,=\, \sum_{\alpha=1}^r \exp(\lambda_\alpha) \, e_\alpha
\tensor e^\alpha
\]
in a $h_0$--unitary frame $\{ e_\alpha \}$ of $E$, where $r$ is
the rank of $E$, and $\{ e^\alpha \}$ is the dual frame of
$E^\ast$; the eigenvalues $\lambda_\alpha$ are real. Then we have
\[
  \log f \,=\, \sum_{\alpha=1}^r \lambda_\alpha \, e_\alpha
\tensor e^\alpha \, ,
\]
and writing
\[
\varphi\,=\, \sum_{\alpha,\beta=1}^r \varphi^\alpha_\beta \,
e_\alpha \tensor e^\beta \, ,
\]
we compute
\begin{align*}
  h_0(\tr_g [\varphi \, , f^{-1} [\varphi^\ast \, , f]] \, , \log f)
  &= - \tr_g \tr (f^{-1} [\varphi^\ast \, , f] \wedge [\varphi^\ast \, , \log f]^\ast) \\
  &= - \tr_g \sum_{\alpha,\beta=1}^r (\exp(\lambda_\alpha - \lambda_\beta) - 1) \, \overline{\varphi^\alpha_\beta} \wedge (\lambda_\alpha - \lambda_\beta) \, \varphi^\alpha_\beta \\
  &= \sum_{\alpha,\beta=1}^r (\exp(\lambda_\alpha - \lambda_\beta) - 1)(\lambda_\alpha - \lambda_\beta) |\varphi^\alpha_\beta|^2 \, .
\end{align*}
Therefore, \eqref{shiq} holds because
$x(\exp(x) - 1) \,\in\, {\mathbb R}^{\geq 0}$ for all $x \in \RR$. We
already noted that
\eqref{shiq} completes the proof of the proposition.
\end{proof}

The following corollary can be derived from Lemma \ref{lemma2} as in \cite[Corollaries 18 and~19]{Lo09}.

\begin{corollary} \label{corollary1} \mbox{}
\begin{enumerate}
\item[(i)]  $m \,\leq\, \eps^{-1} C$, where $m$ is defined in
\eqref{15}, and $C$ is as in Remark \ref{rC}, and
\item[(ii)] $m \,\leq\, C \big(||\log f||_{L^2} + 1\big)^2$.
\end{enumerate}
\end{corollary}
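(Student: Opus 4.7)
The plan is to mirror the arguments in \cite[Corollaries~18 and~19]{Lo09}, taking Lemma \ref{lemma2} as the starting pointwise differential inequality.

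For part~(i) I will apply the maximum principle on the compact manifold $M$. At a point $x_0 \in M$ where $|\log f|^2$ attains its maximum, the second--order operator $\tr_g \del \dbar$ acting on functions (which is the affine analogue of the complex Laplacian and is elliptic with non--positive symbol at interior maxima) satisfies $\tr_g \del\dbar|\log f|^2(x_0) \leq 0$. Evaluating Lemma \ref{lemma2} at $x_0$ and dropping the resulting non--negative term on the left yields
\[
  \eps \, |\log f|^2(x_0) \,\leq\, \big|K_0^\varphi - \gamma\,\id_E\big|(x_0)\cdot |\log f|(x_0) \,\leq\, C\cdot m,
\]
where $C := \sup_M|K_0^\varphi - \gamma\,\id_E|$ depends only on the initial data in the sense of Remark \ref{rC}. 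Dividing by $|\log f|(x_0) = m$ (assuming $m>0$, the case $m=0$ being trivial) gives $\eps\, m \leq C$, which is~(i).

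For part~(ii) the plan is to treat $u:=|\log f|^2$ as a non--negative subsolution of a linear elliptic inequality and apply Moser iteration. Discarding the $\eps\,|\log f|^2$ term in Lemma \ref{lemma2} and using $2|\log f| \leq 1 + |\log f|^2$ gives the pointwise estimate
\[
  -\tr_g \del\dbar\,u \,\leq\, C\,(1+u) \qquad \text{on } M.
\]
Using the integration--by--parts formula associated with $\tr_g \del \dbar$ and the measure $\omega_g^n/\nu$ (available because of the Gauduchon condition $\del\dbar\,\omega_g^{n-1}=0$) together with the Sobolev inequality on the compact Riemannian manifold $(M,g)$, the standard Moser iteration then produces an $L^\infty$ bound of the form
\[
  \sup_M u \,\leq\, C'\big(1 + \|u\|_{L^1}\big) \,=\, C'\big(1 + \|\log f\|_{L^2}^2\big),
\]
which yields $m \leq C\big(\|\log f\|_{L^2}+1\big)$, and hence the (slightly weaker) bound $m \leq C\big(\|\log f\|_{L^2}+1\big)^2$ claimed in~(ii).

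I expect the main technical obstacle to lie in carrying out the Moser iteration with the non--standard second--order operator $\tr_g \del \dbar$ in place of the Riemannian Laplacian of $g$. The necessary ingredients are already in place in \cite{Lo09}: the Gauduchon condition supplies the correct integration--by--parts formula against the measure $\omega_g^n/\nu$, and the Sobolev embedding on $(M,g)$ supplies the iteration step. Once these analytic tools are invoked, the passage from the differential inequality in Lemma \ref{lemma2} to the two bounds is routine.
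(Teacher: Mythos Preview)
Your proposal is correct and matches the paper's approach exactly: the paper's entire proof is the sentence ``derived from Lemma~\ref{lemma2} as in \cite[Corollaries~18 and~19]{Lo09},'' and what you have written is precisely an outline of those two corollaries (maximum principle for~(i), Moser iteration with the Gauduchon integration--by--parts for~(ii)). Your observation that the iteration actually yields the linear bound $m\leq C(\|\log f\|_{L^2}+1)$, of which the stated quadratic bound is a consequence, is also correct.
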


\begin{proposition}
Let $(E \, , \varphi)$ be a simple flat Higgs bundle over $M$.
Suppose there is an $m \,\in\, \RR$ such that $m_\eps\,\leq\, m$
for all $\eps\,\in\,(\eps_0 \, , 1]$. Let $\phi_\eps$ and $f_\eps$
be as in \eqref{15}. Then for all $p > 1$ and
$\eps \in (\eps_0 \, , 1]$,
\[
||\phi_\eps||_{L_2^p}\,\leq\, C(m) (1 + ||f_\eps||_{L_2^p}) \, ,
\]
where $C(m)$ may depend on $p$ as well as $m$ along with the
initial data.
\end{proposition}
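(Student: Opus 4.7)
The plan is to derive a linear second-order elliptic PDE for $\phi_\eps$ by differentiating the equation $L_\eps(f_\eps) = 0$ in $\eps$, and then to apply standard $L^p$ elliptic regularity. Under the hypothesis $m_\eps \leq m$, one has pointwise control of $f_\eps^{\pm 1}$ and $\log f_\eps$ by $C(m)$, and by Proposition \ref{phi-bound} one also has $\max_M |\phi_\eps| \leq C(m)$, so only the terms involving derivatives of $f_\eps$ will propagate into the final estimate.

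First I would differentiate \eqref{equation1} in $\eps$, using $\phi_\eps = df_\eps/d\eps$. The resulting identity takes the schematic form
\[
\tr_g \dbar \del_0 \phi_\eps + R(f_\eps, \del_0 f_\eps, \varphi, \varphi^\ast)(\phi_\eps, \del \phi_\eps) = -\log f_\eps,
\]
where $R$ is a first-order linear differential operator in $\phi_\eps$ with coefficients that are polynomial in $f_\eps^{\pm 1}, \varphi, \varphi^\ast$ and at most linear in $\del_0 f_\eps$. Crucially, the Higgs contribution $\tr_g [\varphi, f^{-1}[\varphi^\ast, f]]$ contains no derivatives of $f$; its linearization is therefore a purely zeroth-order perturbation in $\phi_\eps$, so ellipticity is unaffected and the principal symbol is that of $\tr_g \dbar \del_0$.

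Next I would invoke the Calder\'on--Zygmund $L^p$ estimate for the elliptic operator $\tr_g \dbar \del_0$ acting on $\End E$,
\[
||\phi_\eps||_{L_2^p} \leq C\big( ||\tr_g \dbar \del_0 \phi_\eps||_{L^p} + ||\phi_\eps||_{L^p} \big),
\]
and bound the right-hand side by rearranging the PDE. The term $-\log f_\eps$ contributes $||\log f_\eps||_{L^p} \leq C(m)$; the zeroth-order-in-$\phi_\eps$ contributions from the Higgs bracket and from the linearization of $\eps \log f$ are bounded by $C(m) ||\phi_\eps||_{L^\infty} \leq C(m)$ via Proposition \ref{phi-bound}. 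The remaining contributions have the form $(\text{coefficient}) \cdot \del \phi_\eps$ or $(\text{coefficient}) \cdot \phi_\eps$, where the coefficient contains $\del_0 f_\eps$ linearly and is otherwise bounded in $L^\infty$ by $C(m)$. Using H\"older's inequality, $||\del_0 f_\eps||_{L^p} \leq ||f_\eps||_{L_2^p}$, and the Sobolev interpolation $||\del \phi_\eps||_{L^p} \leq \delta ||\phi_\eps||_{L_2^p} + C_\delta ||\phi_\eps||_{L^p}$, one absorbs a small multiple of $||\phi_\eps||_{L_2^p}$ into the left-hand side and obtains $||\phi_\eps||_{L_2^p} \leq C(m)(1 + ||f_\eps||_{L_2^p})$.

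The main obstacle is the careful bookkeeping in the linearization: one must verify that after differentiating $\tr_g \dbar (f^{-1} \del_0 f)$ and $\tr_g [\varphi, f^{-1} [\varphi^\ast, f]]$ in $f$, every occurrence of $\del_0 f_\eps$ in the coefficient of $\phi_\eps$ or $\del \phi_\eps$ is multiplied by a factor that is $L^\infty$-bounded by $C(m)$, so that the contribution to the $L^p$-norm is genuinely linear in $||f_\eps||_{L_2^p}$ rather than higher order. This parallels \cite[Proposition 20]{Lo09}; the only addition here is the Higgs bracket, which as noted contributes only zeroth-order terms in $\phi_\eps$ with $C(m)$-bounded coefficients, leaving the estimate of the same form as in the Higgs-free case.
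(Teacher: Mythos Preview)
Your approach is essentially identical to the paper's: differentiate the perturbed equation in $\eps$, isolate the second-order elliptic operator (the paper writes it as $\Lambda = n\,\del_0^\ast\del_0 + \id_E$ and records the resulting identity explicitly as \eqref{equation3}), observe that the Higgs bracket contributes only the zeroth-order terms $-\phi\,\tr_g[\varphi,f^{-1}[\varphi^\ast,f]]$ and $-f\,\tr_g[\varphi,[f^{-1}\varphi^\ast f,f^{-1}\phi]]$ bounded via Proposition~\ref{phi-bound}, and then invoke the $L^p$ argument of \cite[Proposition~21]{Lo09}. One small correction to your structural description: the linearization of $\tr_g\dbar(f^{-1}\del_0 f)$ produces the term $-\tr_g(\dbar f\wedge f^{-1}\phi f^{-1}\del_0 f)$, which is \emph{quadratic} rather than linear in $\del f$; its $L^p$ norm is nevertheless controlled by $C(m)\|f\|_{L_2^p}$ via $\|\del f\|_{L^{2p}}^2\leq C\|f\|_{L^\infty}\|f\|_{L_2^p}$, exactly as in the reference you cite.
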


\begin{proof}
We proceed as in the proof of \cite[Proposition 21]{Lo09}. Similar to \cite[equation (19)]{Lo09}, for the operator
\[
  \Lambda \, := \,  n \, \del_0^\ast \del_0 + \id_E \, ,
\]
we obtain
\begin{equation} \label{equation3} \begin{split}
\Lambda \phi \,=\, & - \phi \big(K_0^\varphi - (\gamma + 1) \id_E + \eps
\log f + \tr_g [\varphi \, , f^{-1} [\varphi^\ast \, , f]]\big) \\
               & - \tr_g(\dbar f \wedge f^{-1} \phi f^{-1} \del_0 f) + \tr_g(\dbar f \wedge f^{-1} \del_0 \phi)
                 + \tr_g(\dbar \phi \wedge f^{-1} \del_0 f) \\
               & - f \log f - \eps f \left(\frac{\delta}{\delta f} \, \log f\right)(\phi)
                 - n \, \frac{\del_0 \phi \wedge \dbar \omega_g^{n-1}}{\omega_g^n} \\
               & - f \tr_g [\varphi \, , [f^{-1} \varphi^\ast f \, , f^{-1} \phi]] \, .
\end{split} \end{equation}
Compared to \cite[equation (19)]{Lo09}, the right--hand side of equation \eqref{equation3} contains the two additional terms
\[
  - \phi \tr_g [\varphi \, , f^{-1} [\varphi^\ast \, , f]] \quad \text{and} \quad
  - f \tr_g [\varphi \, , [f^{-1} \varphi^\ast f \, , f^{-1} \phi]] \, .
\]
By Proposition \ref{phi-bound}, these are both bounded in $L^p$
norm by $C(m)$. Consequently, the proposition follows as in
\cite{Lo09}.
\end{proof}

As in \cite[Corollary 22]{Lo09}, we obtain the following corollary.
\begin{corollary} \label{corollary2}
Suppose there is an $m\,\in\, \RR$ such that $m_\eps\,\leq\,m$
for all $\eps \,\in\, (\eps_0 \, , 1]$. Then for all $\eps\,\in\,
(\eps_0 \, , 1]$, we have $||f_\eps||_{L_2^p} \,\leq\, C(m)$,
where $C(m)$ is independent of $\eps$.
\end{corollary}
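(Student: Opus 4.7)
The plan is to convert the pointwise-in-$\eps$ bound
\[
  \|\phi_\eps\|_{L_2^p} \,\leq\, C(m)\bigl(1 + \|f_\eps\|_{L_2^p}\bigr)
\]
supplied by the preceding proposition into a bound for $\|f_\eps\|_{L_2^p}$ itself by integrating in $\eps$ and applying Gr\"onwall's inequality. Since the smooth family $\{f_\eps\}_{\eps \in (\eps_0,1]}$ produced by openness depends differentiably on $\eps$ with derivative $\phi_\eps = \tfrac{df_\eps}{d\eps}$ (by the very definition \eqref{15}), the fundamental theorem of calculus applied in the Banach space $L_2^p \Herm(E,h_0)$ gives, for each $\eps\in(\eps_0,1]$,
\[
  f_\eps \,=\, f_1 \,-\, \int_\eps^1 \phi_\tau \, d\tau \, .
\]

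Taking the $L_2^p$ norm and invoking the preceding proposition yields
\[
  \|f_\eps\|_{L_2^p}
  \,\leq\, \|f_1\|_{L_2^p} + \int_\eps^1 \|\phi_\tau\|_{L_2^p}\, d\tau
  \,\leq\, \|f_1\|_{L_2^p} + C(m) \int_\eps^1 \bigl(1 + \|f_\tau\|_{L_2^p}\bigr) d\tau \, .
\]
Here the quantity $\|f_1\|_{L_2^p}$ is a fixed constant: $f_1$ is the smooth solution of $L_1(f)=0$ produced once and for all by Proposition \ref{background-metric}, and in particular does not depend on $\eps$. Consequently, setting $u(\eps) := \|f_\eps\|_{L_2^p}$, the inequality above takes the standard Gr\"onwall form
\[
  u(\eps) \,\leq\, A + C(m) \int_\eps^1 u(\tau)\, d\tau
\]
for some constant $A = A(m)$ (absorbing $\|f_1\|_{L_2^p}$ and $C(m)(1-\eps) \leq C(m)$).

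The final step is a routine application of Gr\"onwall's inequality in its backward-integral form: differentiating $w(\eps) := \int_\eps^1 u(\tau)\,d\tau$ gives $-w'(\eps) = u(\eps)$, so the estimate above becomes $w'(\eps) + C(m) w(\eps) \geq -A$; multiplying by $e^{C(m)\eps}$ and integrating from $\eps$ to $1$ (where $w(1)=0$) yields $w(\eps) \leq (A/C(m))(e^{C(m)} - 1)$, and therefore $u(\eps) \leq A + C(m)w(\eps) \leq C(m)$ uniformly in $\eps \in (\eps_0,1]$, as required.

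There is essentially no obstacle beyond this integration, since the Higgs contributions have already been absorbed into $C(m)$ in the preceding proposition (via Proposition \ref{phi-bound} and Proposition \ref{estimate1}); the only thing one must verify carefully is that $\eps \mapsto f_\eps$ is differentiable with derivative $\phi_\eps$ in a norm strong enough that $\|\phi_\eps\|_{L_2^p}$ controls $\|f_\eps - f_1\|_{L_2^p}$. This is immediate from the openness argument in Proposition \ref{openness}, which produces $f_\eps$ via the implicit function theorem in $L_k^p$ for $k$ large, hence $C^1$ in $\eps$ with values in $L_2^p$.
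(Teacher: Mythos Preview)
Your proposal is correct and follows essentially the same approach as the paper, which simply cites \cite[Corollary 22]{Lo09}; that reference (following \cite{LT95}) proves the bound by exactly the Gr\"onwall argument you wrote out, integrating $\phi_\eps = \tfrac{d}{d\eps} f_\eps$ from $\eps$ to $1$ and using the preceding proposition's estimate $\|\phi_\eps\|_{L_2^p}\leq C(m)(1+\|f_\eps\|_{L_2^p})$.
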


\begin{proposition}
Let $(E \, , \varphi)$ be a simple flat Higgs bundle over $M$. Then
\begin{enumerate}
\item[(i)]  $J = (0 \, , 1]$, and
\item[(ii)] if $||f_\eps||_{L^2}$ (see \eqref{14}) is bounded
independently of $\eps \in
(0 \, , 1]$, then there exists a smooth solution $f_0$ to the Yang--Mills--Higgs equation $L_0(f) = 0$.
\end{enumerate}
\end{proposition}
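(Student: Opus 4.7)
The plan is to run the standard continuity--method closure argument, invoking the a priori estimates established in Proposition \ref{phi-bound}, Corollary \ref{corollary1} and Corollary \ref{corollary2}. Openness of $J$ and $1 \in J$ have already been shown, so for part (i) it remains to prove that $J$ is closed in $(0 \, , 1]$, while part (ii) requires passing to the limit $\eps \to 0$ under the additional $L^2$ assumption on $f_\eps$. Throughout, we work with the Banach spaces $L_k^p \Herm(E \, , h_0)$ for $p$ large and $k$ large enough that $L_k^p \hookrightarrow C^{k-\lceil n/p\rceil}$ compactly.

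\textbf{Proof of (i).} Let $\eps_0 \, := \, \inf J$ and suppose for contradiction that $\eps_0 > 0$. Choose a sequence $\{\eps_j\} \subset J$ with $\eps_j \searrow \eps_0$. By Corollary \ref{corollary1}(i), $m_{\eps_j} \leq \eps_j^{-1} C \leq 2 \eps_0^{-1} C \,=:\, m$ for all sufficiently large $j$, hence the hypothesis of Corollary \ref{corollary2} is satisfied on the interval $(\eps_0/2 \, , 1]$ with this value of $m$. Therefore $\|f_{\eps_j}\|_{L_2^p} \leq C(m)$ uniformly. For $p$ large enough, the compact Sobolev embedding $L_2^p \hookrightarrow C^1$ produces a subsequence (still denoted $\{f_{\eps_j}\}$) converging in $C^1$ to some $f_{\eps_0} \in L_2^p \Herm(E \, , h_0)$. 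The uniform bound $|\log f_{\eps_j}| \leq m$ and Lemma \ref{det} ensure that the eigenvalues of $f_{\eps_j}$ stay in $[e^{-m} \, , e^m]$, so the limit $f_{\eps_0}$ is positive definite. Passing to the limit in $L_{\eps_j}(f_{\eps_j}) = 0$ yields $L_{\eps_0}(f_{\eps_0}) = 0$ in the weak sense; the standard elliptic bootstrap (the equation is quasilinear elliptic in $f$ and the dependence on $\varphi$ adds only lower--order terms) promotes $f_{\eps_0}$ to a smooth solution. Hence $\eps_0 \in J$, contradicting openness of $J$ at $\eps_0$. Therefore $\eps_0 = 0$ and $J = (0 \, , 1]$.

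\textbf{Proof of (ii).} Assume now that $\|f_\eps\|_{L^2}$ is bounded independently of $\eps \in (0 \, , 1]$. Corollary \ref{corollary1}(ii) then gives a uniform bound $m_\eps \leq m$ for all $\eps \in (0 \, , 1]$. Corollary \ref{corollary2} again produces a uniform bound $\|f_\eps\|_{L_2^p} \leq C(m)$. Extracting a subsequence $\eps_j \to 0$, the same compact embedding argument yields a positive definite limit $f_0 \in L_2^p \Herm(E \, , h_0)$ with $f_{\eps_j} \to f_0$ in $C^1$. In the equation
\[
 K_0^\varphi - \gamma \id_E + \tr_g \dbar(f_{\eps_j}^{-1} \del_0 f_{\eps_j}) + \tr_g [\varphi \, , f_{\eps_j}^{-1} [\varphi^\ast \, , f_{\eps_j}]] + \eps_j \log f_{\eps_j} \,=\, 0 \, ,
\]
the last term is $O(\eps_j m) \to 0$ pointwise, and every other term converges weakly (in $L^p$) to its counterpart for $f_0$. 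Therefore $L_0(f_0) = 0$ weakly, and elliptic regularity applied to this quasilinear equation gives that $f_0$ is smooth.

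\textbf{Main obstacle.} The delicate step in both parts is producing a uniform $L_2^p$ bound that survives the nonlinearity introduced by $[\varphi \, , f^{-1} [\varphi^\ast \, , f]]$, but this has already been absorbed into Proposition \ref{phi-bound} and Corollary \ref{corollary2} via Proposition \ref{estimate2}, whose proof uses simplicity of $(E \, , \varphi)$. Once these are in hand, what remains is a routine compactness and bootstrap argument. The genuinely hard part of Theorem \ref{main} is deferred to what comes next, namely showing that stability (not merely simplicity) forces $\|f_\eps\|_{L^2}$ to stay bounded as $\eps \to 0$; without that hypothesis, part (ii) is a conditional statement, and here we have only to verify that the hypothesis suffices.
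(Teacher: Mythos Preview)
Your argument is correct and follows essentially the same strategy as the paper: use Corollary \ref{corollary1} to bound $m_\eps$ (via part (i) when $\eps_0>0$, and via part (ii) together with the $L^2$ hypothesis when $\eps_0=0$), feed this into Corollary \ref{corollary2} to obtain a uniform $L_2^p$ bound, extract a convergent subsequence, pass to the limit in the equation, and bootstrap. The paper phrases the compactness step via weak $L_2^p$ and strong $L_1^p \hookrightarrow C^0$ convergence and checks the distributional limit of each term in $L_{\eps_0}(f_{\eps_0})$ separately, but this is only a cosmetic variation of your $L_2^p \hookrightarrow C^1$ argument.
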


\begin{proof}
For (i), it is enough to show that if $J \,=\, (\eps_0 \, , 1]$ for
$\eps_0 \,\in\, (0 \, , 1)$, then there is a smooth solution
$f_{\eps_0}$ to $L_{\eps_0}(f) \,=\, 0$. Indeed, this implies
that $J$ is closed and so (i) follows from Corollary
\ref{initial-solution} and Proposition \ref{openness}.

For (ii),
we need to show the same for $\eps_0 \,=\, 0$.
In both cases, we know that there is a constant $C \,>\, 0$ such
that $||f_\eps||_{L_2^p} \,\leq\, C$ for all $\eps \,\in\,
(\eps_0 \, , 1]$. Indeed, in case of (i), this follows from Corollary
\ref{corollary1} (i) and Corollary \ref{corollary2}, and in case
of (ii), it follows from Corollary \ref{corollary1} (ii),
Corollary \ref{corollary2} and the hypothesis of (ii).

So assume that $J \,=\, (\eps_0 \, , 1]$ for $\eps_0 \,\in\, [0
\, , 1)$
and that there is a constant $C \,>\, 0$ such that
$||f_\eps||_{L_2^p} \,\leq\, C$ for all $\eps\,\in\, (\eps_0 \, ,
1]$. We will find a sequence $\eps_i\,\longrightarrow\, \eps_0$
such that the limit $f_{\eps_0} \,=\, \lim_{i\to\infty} f_{\eps_i}$ is
the required solution.

Choose $p \,>\, n$. Then $L_1^p$ maps compactly into $C^0$. The
uniform $L_2^p$ norm bound implies that there is a sequence $\eps_i
\,\longrightarrow\,\eps_0$ such that
$f_{\eps_i}\,\longrightarrow\, f_{\eps_0}$
converges weakly in $L_2^p$ norm and strongly in $L_1^p$ norm as
well as in $C^0$ norm.

For a smooth section $\alpha$ of $\End E$, we compute in the sense of distributions:
\begin{align*}
  \left<L_{\eps_0}(f_{\eps_0}) \, , \alpha\right>_{L^2}
  &\,=\, \left<L_{\eps_0}(f_{\eps_0}) - L_{\eps_i}(f_{\eps_i}),
\alpha\right>_{L^2} \\
  &=\, \int_M h_0\big(\tr_g \dbar(f_{\eps_0}^{-1} \del_0 f_{\eps_0} -
f_{\eps_i}^{-1} \del_0 f_{\eps_i}) \, , \alpha\big) \, \frac{\omega_g^n}{\nu} \\
  &\phantom{=} + \int_M h_0(\eps_0 \log f_{\eps_0} - \eps_i \log f_{\eps_i} \, , \alpha) \, \frac{\omega_g^n}{\nu} \\
  &\phantom{=} + \int_M h_0\big(\tr_g [\varphi \, , f_{\eps_0}^{-1} [\varphi^\ast \, , f_{\eps_0}] - f_{\eps_i}^{-1} [\varphi^\ast \, , f_{\eps_i}]] \, , \alpha\big) \, \frac{\omega_g^n}{\nu} \, .
\end{align*}
The first two integrals go to zero as $i\,\longrightarrow\,
\infty$ by \cite[proof of Proposition 23]{Lo09}. For the third
integral, we can assume that $f_{\eps_i}^{-1}\,\longrightarrow\,
f_{\eps_0}^{-1}$
strongly in $L_1^p$ norm and thus in $C^0$ norm (after going to a
subsequence) since $f^{-1} \,= \, \exp(- \log f)$, and both
$\exp$ and $\log$ maps on functions are continuous in $L_1^p$
norm. As $f_{\eps_i}\, \longrightarrow\,
f_{\eps_0}$ in $C^0$ norm, the third integral also
goes to zero as $i\,\longrightarrow\, \infty$. Therefore,
$L_{\eps_0}(f_{\eps_0})\,=\, 0$ in the sense of distributions.

In the same way, it can be shown that for $f_{\eps_0} \,\in\, L_2^p$,
we have $\tr_g \dbar \del_0 f_{\eps_0} \,\in\, L_1^p$. As in
\cite{Lo09}, it then follows that $f_{\eps_0}$ is smooth
and satisfies the equation $L_{\eps_0}(f) \,=\, 0$.
\end{proof}

\subsection{Construction of a destabilizing subbundle}

We will construct a destabilizing flat subbundle of $(E \, ,
\varphi)$ if $\limsup_\eps ||f_\eps||_{L^2} \,=\, \infty$. For a
sequence $\eps_i\,\longrightarrow\, 0$, we will re--scale by the
reciprocal $\rho_i$
of the largest eigenvalue of $f_{\eps_i}$. Then we will show that
the limit
\[
  \lim_{\sigma \to 0} \, \lim_{i \to \infty} \, (\rho_i f_{\eps_i})^\sigma
\]
exists, and each of its eigenvalues is $0$ or $1$. A projection to
the destabilizing subbundle will be given by $\id_E$ minus this limit.

\begin{proposition} \label{estimate-destabilizing}
Let $\eps \,>\, 0$ and $0 \,<\, \sigma \,\leq\, 1$. If
$L_\eps(f)\,=\,0$, then
\[
  - \frac{1}{\sigma} \, \tr_g \del \dbar(\tr f^\sigma) + \eps h_0(\log f \, , f^\sigma) + \big|f^{-\frac{\sigma}{2}} \del_0(f^\sigma)\big|^2
 + \big|f^{-\frac{\sigma}{2}} [\varphi^\ast \, , f^\sigma]\big|^2 \leq -
h_0(K_0^\varphi - \gamma \id_E \, , f^\sigma) \, ,
\]
where $f$ is as in \eqref{14} and $K_0^\varphi$ is defined as in
\eqref{Kvp} for $h_0$.
\end{proposition}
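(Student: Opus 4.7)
The plan is to pair the equation $L_\eps(f) = 0$ with $f^\sigma$ in the pointwise $h_0$--Hermitian inner product and to bound each of the three contributions on the left individually. Rearranging \eqref{equation1}, the PDE reads
$$
-(K_0^\varphi - \gamma\id_E) \,=\, \tr_g \dbar(f^{-1}\del_0 f) + \tr_g[\varphi \, , f^{-1}[\varphi^\ast \, , f]] + \eps\log f\, ,
$$
and applying $h_0(\cdot\, , f^\sigma)$ to both sides produces an identity whose right--hand side is exactly $-h_0(K_0^\varphi - \gamma\id_E \, ,\, f^\sigma)$. So the task reduces to bounding each of the three summands on the left from below by the corresponding non--negative summand claimed in the proposition.

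For the two non--Higgs summands $\tr_g \dbar(f^{-1}\del_0 f)$ and $\eps\log f$, I would invoke the pointwise computation of L\"ubke--Teleman \cite{LT95} as already used in the proof of \cite[Proposition 17]{Lo09}: after pairing with $f^\sigma$, these two terms produce respectively $-\sigma^{-1} \tr_g \del\dbar \tr f^\sigma + |f^{-\sigma/2} \del_0(f^\sigma)|^2$ and $\eps\, h_0(\log f \, , f^\sigma)$. This step transfers verbatim from \cite{Lo09} because the Higgs field contributes no derivatives of $f$ to these two summands, so the operator identities used there are unchanged.

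The new work, and the principal obstacle, is to establish the pointwise estimate
$$
h_0\bigl(\tr_g[\varphi \, , f^{-1}[\varphi^\ast \, , f]] \, , f^\sigma\bigr) \,\geq\, \bigl|f^{-\sigma/2}[\varphi^\ast \, , f^\sigma]\bigr|^2\, .
$$
My approach is the pointwise diagonalisation already used in Lemma \ref{lemma2}. At a fixed point of $M$, choose an $h_0$--unitary frame $\{e_\alpha\}_{\alpha=1}^r$ in which $f = \sum_\alpha e^{\lambda_\alpha}\, e_\alpha \tensor e^\alpha$, and expand $\varphi = \sum_{\alpha,\beta}\varphi^\alpha_\beta\, e_\alpha \tensor e^\beta$. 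Repeating the calculation at the end of the proof of Lemma \ref{lemma2} with $\log f$ replaced by $f^\sigma$ yields
$$
h_0\bigl(\tr_g[\varphi \, , f^{-1}[\varphi^\ast \, , f]] \, , f^\sigma\bigr) \,=\, \sum_{\alpha,\beta}\bigl(e^{\lambda_\alpha-\lambda_\beta}-1\bigr)\bigl(e^{\sigma\lambda_\alpha}-e^{\sigma\lambda_\beta}\bigr)\,|\varphi^\alpha_\beta|^2\, ,
$$
whereas a direct computation in the same frame gives
$$
\bigl|f^{-\sigma/2}[\varphi^\ast \, , f^\sigma]\bigr|^2 \,=\, \sum_{\alpha,\beta} e^{-\sigma\lambda_\beta}\bigl(e^{\sigma\lambda_\alpha}-e^{\sigma\lambda_\beta}\bigr)^2\,|\varphi^\alpha_\beta|^2\, .
$$
With $u := \lambda_\alpha - \lambda_\beta$ the required termwise inequality reduces to $(e^u - 1)(e^{\sigma u} - 1) \geq (e^{\sigma u} - 1)^2$, equivalently $e^u \geq e^{\sigma u}$ when $u \geq 0$ and $e^u \leq e^{\sigma u}$ when $u \leq 0$; both hold because $0 < \sigma \leq 1$.

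Adding the two estimates and substituting into the paired equation yields the proposition. The only delicate point I anticipate is the careful sign bookkeeping in the mixed wedge $(f^{-1}[\varphi^\ast , f])\wedge[\varphi^\ast , f^\sigma]^\ast$ under the conventions of Section 2; once the two spectral identities above are verified, the scalar inequality that closes the proof is elementary.
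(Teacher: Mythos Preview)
Your proof is correct and follows essentially the same route as the paper: pair $L_\eps(f)=0$ with $f^\sigma$, invoke \cite[proof of Lemma~3.4.4~(ii)]{LT95} for the non--Higgs terms, and handle the Higgs term by the pointwise diagonalisation from Lemma~\ref{lemma2}, reducing to the scalar inequality $(e^u-1)(e^{\sigma u}-1)\geq (e^{\sigma u}-1)^2$. The spectral identities and the elementary inequality you wrote down match the paper's computation exactly.
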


\begin{proof}
Using \eqref{equation2}, we have
\[
  - h_0\big(K_0^\varphi - \gamma \id_E \, , f^\sigma\big)
\,=\, h_0\big(\tr_g \dbar (f^{-1} \del_0 f) + \eps \log f \, ,
f^\sigma\big)
    + h_0\big(\tr_g [\varphi \, , f^{-1} [\varphi^\ast \, , f]] \, , f^\sigma\big) \, .
\]
By \cite[proof of Lemma 3.4.4 (ii)]{LT95}, the first summand satisfies
\[
  h_0\big(\tr_g \dbar (f^{-1} \del_0 f) + \eps \log f \, , f^\sigma\big)
\,\geq\, - \frac{1}{\sigma} \, \tr_g \del \dbar(\tr f^\sigma) +
\eps
h_0(\log f \, , f^\sigma) + \big|f^{-\frac{\sigma}{2}} \del_0(f^\sigma)\big|^2 \, .
\]
It remains to show that
\begin{equation}\label{rs}
h_0\big(\tr_g [\varphi, f^{-1} [\varphi^\ast \, , f]] \, ,
f^\sigma\big)\,\geq\, \big|f^{-\frac{\sigma}{2}} [\varphi^\ast \,
, f^\sigma]\big|^2\, .
\end{equation}
In the notation of Lemma \ref{lemma2}, we have
\begin{align*}
  h_0\big(\tr_g [\varphi \, , f^{-1} [\varphi^\ast \, , f]] \, , f^\sigma\big)
  &= - \tr_g \tr (f^{-1} [\varphi^\ast\, , f] \wedge [\varphi^\ast \, , f^\sigma]^\ast) \\
  &= - \tr_g \sum_{\alpha,\beta=1}^r (\exp(\lambda_\alpha - \lambda_\beta) - 1) \overline{\varphi^\alpha_\beta} \wedge (\exp(\sigma \lambda_\alpha) - \exp(\sigma \lambda_\beta)) \varphi^\alpha_\beta \\
  &= \sum_{\alpha,\beta=1}^r (\exp(\lambda_\alpha - \lambda_\beta) - 1) (\exp(\sigma \lambda_\alpha) - \exp(\sigma \lambda_\beta)) |\varphi^\alpha_\beta|^2 \\
  &\geq \sum_{\alpha,\beta=1}^r \exp(- \sigma \lambda_\beta) (\exp(\sigma \lambda_\alpha) - \exp(\sigma \lambda_\beta))^2 |\varphi^\alpha_\beta|^2
\end{align*}
because $(\exp(x) - 1)(\exp(\sigma x) - 1) \geq (\exp(\sigma x) -
1)^2$
for all $x \in \RR$ and $0 \leq \sigma \leq 1$. The inequality
in \eqref{rs} now follows from
\begin{align*}
  \big|f^{-\frac{\sigma}{2}} [\varphi^\ast \, , f^\sigma]\big|^2
  &\,=\, - \tr_g \tr(f^{-\frac{\sigma}{2}} [\varphi^\ast \, , f^\sigma]
\wedge (f^{-\frac{\sigma}{2}} [\varphi^\ast \, , f^\sigma])^\ast) \\
  &=\, \sum_{\alpha,\beta=1}^r \exp(- \sigma \lambda_\beta) (\exp(\sigma
\lambda_\alpha) - \exp(\sigma \lambda_\beta))^2 |\varphi^\alpha_\beta|^2 \, .
  \qedhere
\end{align*}
\end{proof}

Now for $x \,\in\, M$, let $\lambda(\eps, x)$ be
the largest eigenvalue of $\log f_\eps(x)$. Define
\begin{equation}\label{17}
  M_\eps \, := \, \max_{x \in M} \lambda(\eps, x) \quad \text{and} \quad
  \rho_\eps \, := \, \exp(- M_\eps) \, .
\end{equation}
Since $\det f_\eps \,=\, 1$ by Lemma \ref{det}, it follows that
$\rho_\eps \,\leq\, 1$. As in \cite[Lemma 25]{Lo09}, we have the
following lemma.

\begin{lemma} \label{lemma3}
Assume that $\limsup_{\eps \to \infty} ||f_\eps||_{L^2}\,=\,
\infty$. Then
\begin{enumerate}
\item[(i)]   $\rho_\eps f_\eps \leq \id_E$, meaning that for
every $x \in M$, and every eigenvalue $\lambda$ of $\rho_\eps f_\eps(x)$, one has $\lambda \leq 1$,
\item[(ii)]  for every $x \in M$, there is an eigenvalue $c_x$ of
$\rho_\eps f_\eps(x)$ with $c_x\, \leq\, \rho_\eps$, where
$\rho_\eps$ and $f_\eps$ are defined in \eqref{17} and \eqref{14}
respectively,
\item[(iii)] $\max_M \rho_\eps |f_\eps| \leq 1$, and
\item[(iv)]  there is a sequence $\eps_i \,\longrightarrow\, 0$
such that $\rho_{\eps_i}\,\longrightarrow\, 0$.
\end{enumerate}
\end{lemma}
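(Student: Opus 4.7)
The plan is to prove the four assertions by working pointwise with the spectrum of the positive self--adjoint endomorphism $f_\eps$; the Higgs field $\varphi$ never enters directly, so this is essentially the direct analogue of \cite[Lemma 25]{Lo09}. At a point $x\,\in\, M$ I diagonalize $f_\eps(x)$ in an $h_0$--unitary frame and let $\mu_1(\eps\, ,x)\,\leq\, \cdots\, \leq\, \mu_r(\eps\, ,x)$ denote the eigenvalues of $\log f_\eps(x)$, so that the eigenvalues of $f_\eps(x)$ are $\exp(\mu_\alpha(\eps\, ,x))$ and $M_\eps\,=\, \sup_{x\in M}\mu_r(\eps\, ,x)$. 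The key bookkeeping is Lemma \ref{det}: $\det f_\eps\,=\, 1$ amounts to $\sum_\alpha \mu_\alpha(\eps\, ,x)\,=\, 0$ at every $x$.

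With this setup, (i), (ii), and (iii) are immediate. For (i), the eigenvalues of $\rho_\eps f_\eps(x)$ are $\exp(\mu_\alpha(\eps\, ,x) - M_\eps)$, each $\leq 1$ by definition of $M_\eps$. For (ii), the trace relation $\sum_\alpha \mu_\alpha(\eps\, ,x)\,=\, 0$ forces $\mu_1(\eps\, ,x)\,\leq\, 0$, so the eigenvalue $c_x\,:=\, \rho_\eps \exp(\mu_1(\eps\, ,x))$ of $\rho_\eps f_\eps(x)$ satisfies $c_x\,\leq\, \rho_\eps$. Part (iii) follows from (i) because $\rho_\eps f_\eps$ is positive self--adjoint and its pointwise operator norm is its largest eigenvalue.

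Part (iv) is where the hypothesis $\limsup_{\eps\to 0}||f_\eps||_{L^2}\,=\, \infty$ enters. I would select a sequence $\eps_i\,\longrightarrow\, 0$ along which $||f_{\eps_i}||_{L^2}\,\longrightarrow\, \infty$ and use the trivial compactness estimate
\[
||f_{\eps_i}||_{L^2}\,\leq\, C\, \max_M |f_{\eps_i}|\, ,
\]
where $L^2$ is computed against $\frac{\omega_g^n}{\nu}$ and $C$ depends only on $M$, $g$, $\nu$, to conclude $\max_M|f_{\eps_i}|\,\longrightarrow\, \infty$. Combining with (iii), which gives $\rho_{\eps_i}\max_M|f_{\eps_i}|\,\leq\, 1$, yields $\rho_{\eps_i}\,\leq\, (\max_M|f_{\eps_i}|)^{-1}\,\longrightarrow\, 0$.

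I do not anticipate any serious obstacle. Each of (i)--(iv) is a direct consequence of the pointwise spectral decomposition of $f_\eps$, the normalization $\det f_\eps\,=\, 1$, and elementary compactness of $M$. The only minor point to settle is the convention used for the pointwise norm $|\cdot|$ on $\End E$; for a positive self--adjoint endomorphism the operator norm and the Hilbert--Schmidt norm differ only by a factor of at most $\sqrt{r}$, which is harmless for the final conclusion.
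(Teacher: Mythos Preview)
Your argument is correct and is precisely the elementary spectral computation that the paper invokes by citing \cite[Lemma 25]{Lo09}; the paper gives no independent proof here, so there is nothing further to compare. Your remark about the norm convention is apt but, as you say, harmless for the conclusions.
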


\begin{proposition} \label{weak-convergence}
There is a subsequence $\eps_i\,\longrightarrow\, 0$ such that
$\rho_{\eps_i}\,\longrightarrow\, 0$, and for $f_i :=\nolinebreak
\rho_{\eps_i} f_{\eps_i}$ (see Lemma \ref{lemma3} for notation),
\begin{enumerate}
\item[(i)]  $f_i$ converges weakly in $L_1^2$ norm to an $f_\infty \neq
0$, and
\item[(ii)] as $\sigma\,\longrightarrow\, 0$, $f_\infty^\sigma$
converges weakly in $L_1^2$ norm to an $f_\infty^0$.
\end{enumerate}
\end{proposition}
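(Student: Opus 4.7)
The plan is to apply the a priori estimate of Proposition \ref{estimate-destabilizing} to $f = f_\eps$ and then rescale by $\rho_\eps^\sigma$ to obtain a uniform $L_1^2$ bound for $g_\eps^\sigma := (\rho_\eps f_\eps)^\sigma$ that is simultaneously uniform in $\eps \in (\eps_0, 1]$ and in $\sigma \in (0, 1]$. After multiplying the pointwise inequality by $\rho_\eps^\sigma$ and using $\rho_\eps^\sigma f_\eps^\sigma = g_\eps^\sigma$ together with the identity $\rho_\eps^\sigma |f_\eps^{-\sigma/2}\del_0(f_\eps^\sigma)|^2 = |(g_\eps^\sigma)^{-1/2}\del_0(g_\eps^\sigma)|^2$, I integrate against $\omega_g^n / \nu$. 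The term $-\frac{1}{\sigma}\tr_g \del\dbar(\tr g_\eps^\sigma)$ integrates to zero by the affine Gauduchon identity $\del\dbar(\omega_g^{n-1}) = 0$ (together with the usual integration by parts on special affine manifolds), so the $1/\sigma$ factor becomes harmless. The right--hand side is bounded because $|g_\eps^\sigma| \leq \sqrt{r}$ (by Lemma \ref{lemma3}(i) the eigenvalues of $g_\eps$ lie in $(0, 1]$), the $\eps$--term is bounded via $\eps\, m_\eps \leq C$ from Corollary \ref{corollary1}(i), and since $(g_\eps^\sigma)^{-1} \geq \id_E$, pointwise one has $|(g_\eps^\sigma)^{-1/2}\del_0(g_\eps^\sigma)|^2 \geq |\del_0(g_\eps^\sigma)|^2$. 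All this yields the desired uniform bound $||g_\eps^\sigma||_{L_1^2} \leq C$.

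For (i), I take $\sigma = 1$. Using Lemma \ref{lemma3}(iv) I choose $\eps_i \longrightarrow 0$ with $\rho_{\eps_i} \longrightarrow 0$. The uniform $L_1^2$ bound on $f_i := \rho_{\eps_i} f_{\eps_i}$ together with the Rellich--Kondrachov theorem gives a subsequence along which $f_i$ converges to some $f_\infty$ weakly in $L_1^2$, strongly in $L^2$, and pointwise almost everywhere. The nonvanishing $f_\infty \neq 0$ comes from the normalization: at any point $x_i \in M$ where $M_{\eps_i}$ is achieved, $f_i$ has an eigenvalue equal to $1$, so $\max_M |f_i| \geq 1$. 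I upgrade this pointwise lower bound to an integral one by a Moser iteration applied to an elliptic inequality of the form $-\tr_g \del\dbar(\tr f_i) \leq C\, \tr f_i$ satisfied by $\tr f_i$; this is derived from $L_{\eps_i}(f_{\eps_i}) = 0$ after discarding the non--negative Higgs contribution by the same mechanism used in the proof of Lemma \ref{lemma2}. Moser iteration then yields $\max_M \tr f_i \leq C\, ||\tr f_i||_{L^2}$, hence $||f_i||_{L^2} \geq 1/C$, a bound that persists in the strong $L^2$ limit and shows $f_\infty \neq 0$.

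For (ii), fix any $\sigma \in (0, 1]$. From the almost everywhere convergence $f_i \longrightarrow f_\infty$, the continuity of the functional calculus $A \longmapsto A^\sigma$ on non--negative self--adjoint matrices, and the dominated convergence theorem (with dominating function $\sqrt{r}$), one has $f_i^\sigma \longrightarrow f_\infty^\sigma$ strongly in $L^2$. The uniform $L_1^2$ bound established in the first paragraph upgrades this to weak convergence in $L_1^2$, and in particular gives $||f_\infty^\sigma||_{L_1^2} \leq C$ with $C$ \emph{independent of $\sigma$}. A further application of weak compactness in $L_1^2$ then produces a sequence $\sigma_j \longrightarrow 0$ along which $f_\infty^{\sigma_j}$ converges weakly in $L_1^2$ to the desired limit $f_\infty^0$.

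The main obstacle is establishing $f_\infty \neq 0$ in (i), since neither weak $L_1^2$ nor strong $L^2$ convergence preserves pointwise suprema. Its resolution requires converting the bound $\max_M |f_i| \geq 1$ into an $L^2$ lower bound via an elliptic regularity (Moser iteration) argument, adapting the corresponding step in \cite[proof of Proposition 26]{Lo09}. The new feature in the Higgs setting is the extra Higgs--theoretic term $|(g_\eps^\sigma)^{-1/2}[\varphi^\ast, g_\eps^\sigma]|^2$ on the left of the a priori inequality of Proposition \ref{estimate-destabilizing}; since it is non--negative it only strengthens the estimates used above and in particular does not interfere with the Moser iteration.
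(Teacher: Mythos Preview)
Your proposal is correct and follows essentially the same approach as the paper, which simply directs the reader to \cite[Proposition 26]{Lo09} together with Corollary \ref{corollary1}(i), Proposition \ref{estimate-destabilizing}, and Lemma \ref{lemma3}. You have accurately unpacked those ingredients: the rescaled integral estimate from Proposition \ref{estimate-destabilizing}, the Gauduchon integration by parts killing the $1/\sigma$ term, the control of the $\eps$--term via $\eps\,m_\eps \leq C$, the Moser iteration for $f_\infty \neq 0$, and the uniform-in-$\sigma$ bound needed for (ii); the observation that the extra non--negative Higgs term only helps is exactly the point of the adaptation.
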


\begin{proof}
Apply the proof of \cite[Proposition 26]{Lo09} and use Corollary \ref{corollary1} (i), Proposition \ref{estimate-destabilizing} and Lemma \ref{lemma3}.
\end{proof}

Now let
\begin{equation}\label{pi}
\varpi := \id_E - f_\infty^0\, .
\end{equation}

\begin{proposition}
The endomorphism $\varpi$ in \eqref{pi} is an $h_0$--orthogonal
projection onto a
flat subbundle $F \,:=\, \varpi(E)$ of $E$ which is preserved by
the Higgs field $\varphi$, meaning it satisfies the identities
\[
  \varpi^2 \,=\, \varpi \, , \quad \varpi^\ast \,=\, \varpi \, ,
\quad (\id_E - \varpi) \circ \dbar \varpi \,=\, 0 \quad
\text{and} \quad
  (\id_E - \varpi) \circ \varphi \circ \varpi = 0 \quad \text{in
}
L^1 \, .
\]
Moreover, $\varpi$ is a smooth endomorphism of $E$. So the flat
subbundle $F$ is smooth.
\end{proposition}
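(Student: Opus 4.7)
The plan is to follow the Uhlenbeck--Yau--Loftin construction of a destabilizing weak subbundle as in \cite[Proposition 27]{Lo09}, extracting the additional $\varphi$--invariance condition from the extra term in Proposition \ref{estimate-destabilizing}.

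Step 1 is the pointwise algebraic analysis. Each $f_{\eps_i}$ is positive definite and self--adjoint with respect to $h_0$, and these properties are inherited by the rescalings $f_i = \rho_{\eps_i} f_{\eps_i}$ and preserved under weak $L_1^2$ limits, so $f_\infty$ is positive semidefinite and $h_0$--self--adjoint. By Lemma \ref{lemma3}(i) the eigenvalues of each $f_i$ lie in $[0, 1]$, hence so do those of $f_\infty$ and of $f_\infty^\sigma$ for $\sigma \in (0, 1]$. Since $\lim_{\sigma \to 0^+} \lambda^\sigma$ equals $0$ for $\lambda = 0$ and $1$ for $\lambda \in (0, 1]$, the eigenvalues of $f_\infty^0$ lie in $\{0, 1\}$. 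Therefore $f_\infty^0$, and hence $\varpi = \id_E - f_\infty^0$, is an $h_0$--orthogonal projection.

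Step 2 is the derivation of the weak identities $(\id_E - \varpi) \circ \dbar \varpi = 0$ and $(\id_E - \varpi) \circ \varphi \circ \varpi = 0$ in $L^1$. Multiplying the inequality of Proposition \ref{estimate-destabilizing} through by $\rho_{\eps_i}^\sigma \leq 1$, rewriting the left--hand side in terms of $f_i = \rho_{\eps_i} f_{\eps_i}$, and integrating against $\omega_g^n / \nu$, the $\tr_g \del \dbar(\tr f_i^\sigma)$ term integrates to zero by the affine Gauduchon condition (cf.\ \cite[Lemma 3]{Lo09}), the $\eps$--term is controlled using $\eps m_\eps \leq C$ from Corollary \ref{corollary1}(i) together with $|f_i| \leq 1$ from Lemma \ref{lemma3}(iii), and the right--hand side is uniformly bounded. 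This produces the uniform estimates
\begin{equation*}
  \int_M \big|f_i^{-\sigma/2} \del_0(f_i^\sigma)\big|^2 \, \frac{\omega_g^n}{\nu}
  + \int_M \big|f_i^{-\sigma/2} [\varphi^\ast, f_i^\sigma]\big|^2 \, \frac{\omega_g^n}{\nu}
  \leq C
\end{equation*}
with $C$ independent of $i$ and $\sigma$. Passing to the weak $L_1^2$ limit first in $i$ (Proposition \ref{weak-convergence}(i)) and then in $\sigma$ (Proposition \ref{weak-convergence}(ii)), one uses that for eigenvalues $\lambda,\mu \in [0,1]$ the quantity $\lambda^{-\sigma/2}(\lambda^\sigma - \mu^\sigma)$ blows up across the jump from the $0$-- to the $1$--eigenspace of $f_\infty^0$; lower semicontinuity of $L^2$--norms then forces the off--diagonal blocks of $\dbar f_\infty^0$ and of $[\varphi^\ast, f_\infty^0]$ across this splitting to vanish. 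This is precisely $(\id_E - f_\infty^0) \dbar f_\infty^0 = 0$ and $(\id_E - f_\infty^0) [\varphi^\ast, f_\infty^0] = 0$ in $L^1$. Substituting $\varpi = \id_E - f_\infty^0$ and taking $h_0$--adjoints, using $\varpi^\ast = \varpi$, yields the two claimed identities.

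The main obstacle is the final smoothness assertion, since a priori $\varpi$ is only an $L_1^2$ weak projection. Here I would invoke the Uhlenbeck--Yau regularity theorem for weak subbundles \cite{UY86}, applied on the complex manifold $M^\CC = TM$ to the fiberwise--trivial extension $\varpi^\CC$ of $\varpi$, exactly as in \cite[end of Section 4]{Lo09}: the hypotheses $(\varpi^\CC)^2 = \varpi^\CC$, $(\varpi^\CC)^\ast = \varpi^\CC$ and $(\id - \varpi^\CC)\,\dbar \varpi^\CC = 0$ force $\varpi^\CC$ to represent a genuine holomorphic subbundle of $E^\CC$, automatically smooth. Since $\varpi^\CC$ is invariant in the fiber directions of $TM \to M$, it descends to a smooth flat subbundle $F \subset E$, and the now--pointwise identity $(\id_E - \varpi) \circ \varphi \circ \varpi = 0$ asserts exactly that $\varphi(F) \subset T^\ast M \tensor F$.
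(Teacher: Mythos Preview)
Your proposal follows the same route as the paper: the identities $\varpi^2=\varpi$, $\varpi^\ast=\varpi$, $(\id_E-\varpi)\dbar\varpi=0$ and the smoothness are handled exactly as in \cite[Proposition~27]{Lo09}, and the new $\varphi$--invariance is extracted from the extra Higgs term in Proposition~\ref{estimate-destabilizing}, with the same final algebraic reduction from $\varpi\circ[\varphi^\ast,\id_E-\varpi]=0$ to $(\id_E-\varpi)\circ\varphi\circ\varpi=0$. Two small points of precision. First, the estimate in Proposition~\ref{estimate-destabilizing} involves $\del_0(f_i^\sigma)$, not $\dbar$; your intermediate identity should read $(\id_E-f_\infty^0)\,\del_0 f_\infty^0=0$, i.e.\ $\varpi\,\del_0\varpi=0$, and only after taking the $h_0$--adjoint and using the Leibniz rule on $\varpi^2=\varpi$ does one obtain $(\id_E-\varpi)\dbar\varpi=0$ --- as written, $(\id_E-f_\infty^0)\dbar f_\infty^0=0$ is $\varpi\,\dbar\varpi=0$, which singles out the opposite block. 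Second, the paper makes your ``blow--up'' heuristic rigorous by inserting an auxiliary factor $(\id_E-f_i^s)$, $0<s\leq\sigma/2$, on the left, giving the explicit bound $\big\lVert(\id_E-f_i^s)[\varphi^\ast,f_i^\sigma]\big\rVert_{L^2}^2\leq\big(\tfrac{s}{s+\sigma/2}\big)^2 C$, which passes cleanly to the limit.
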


\begin{proof}
Following the proof of \cite[Proposition 27]{Lo09}, using
Proposition
\ref{estimate-destabilizing}, Lemma \ref{lemma3} and
Proposition \ref{weak-convergence} we conclude that
\[
  \varpi^2 \,=\, \varpi \, , \quad \varpi^\ast \,=\, \varpi \quad
\text{and} \quad
(\id_E - \varpi) \circ \dbar \varpi \,=\, 0 \quad \text{in } L^1
\]
and that these imply that $\varpi$ is a smooth endomorphism of
$E$.

It remains to show that $(\id_E - \varpi) \circ \varphi \circ
\varpi \,=\,
0$, so that the smooth flat subbundle $F \,=\, \varpi(E)$ is
preserved by the Higgs field $\varphi$.

Applying the same argument as in \cite{Lo09} and using Proposition \ref{estimate-destabilizing}, we compute for $0 < \sigma \leq 1$ and $0 < s \leq \frac{\sigma}{2}$:
\begin{align*}
  \int_M \big|(\id_E - f_i^s) [\varphi^\ast \, , f_i^\sigma]\big|^2 \, \frac{\omega_g^n}{\nu}
  &\leq \left(\frac{s}{s + \frac{\sigma}{2}}\right)^2 \int_M \big|f_i^{-\frac{\sigma}{2}} [\varphi^\ast \, , f_i^\sigma]\big|^2 \, \frac{\omega_g^n}{\nu} \\
  &\leq \left(\frac{s}{s + \frac{\sigma}{2}}\right)^2 \int_M |\eps_i \log f_i + K_0^\varphi - \gamma \id_E| \cdot |f_i^\sigma| \, \frac{\omega_g^n}{\nu} \\
  &\leq \left(\frac{s}{s + \frac{\sigma}{2}}\right)^2 C \, .
\end{align*}
A similar argument to the one in \cite{Lo09} then gives that
\[
  \varpi \circ [\varphi^\ast \, , \id_E -\varpi] \,=\, 0 \, .
\]
Together with $\varpi^2 \,=\, \varpi$, this implies that
\[
  0 \,=\, - \varpi \circ [\varphi^\ast \, , \varpi]\,=\, \varpi \circ
\varphi^\ast \circ (\id_E - \varpi) \, ,
\]
and with $\varpi^\ast = \varpi$, it follows that
\[
  0 \,=\, \big(\varpi \circ \varphi^\ast \circ (\id_E -
\varpi)\big)^\ast \,=\, (\id_E - \varpi) \circ \varphi \circ \varpi
\, ,
\]
completing the proof of the proposition.
\end{proof}

\begin{proposition}
The flat subbundle $F\,=\, \varpi(E)\,\subset\, E$ is a proper
subbundle, meaning
\[
  0 \,<\, \rank F \,<\, \rank E \, .
\]
\end{proposition}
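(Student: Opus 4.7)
The plan is to establish the two strict inequalities $\rank F > 0$ and $\rank F < \rank E$ by analyzing the pointwise spectrum of $f_\infty^0 = \id_E - \varpi$. Since the previous proposition showed that $\varpi$ is a smooth $h_0$--orthogonal projection on the (connected) manifold $M$, its fiberwise rank is locally constant, hence constant, and equal to $\tr \varpi$ pointwise. Thus the claim reduces to showing both $f_\infty^0 \neq 0$ and $f_\infty^0 \neq \id_E$.

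For the inequality $\rank F < \rank E$, I would combine Proposition \ref{weak-convergence}(i), which guarantees $f_\infty \neq 0$, with the pointwise bound $\rho_\eps f_\eps \leq \id_E$ from Lemma \ref{lemma3}(i). The latter passes to the weak $L_1^2$ limit to give $0 \leq f_\infty \leq \id_E$ as self--adjoint endomorphisms, so the eigenvalues of $f_\infty(x)$ lie in $[0 \, , 1]$. On the positive measure subset where $f_\infty \neq 0$, at least one eigenvalue is strictly positive, and raising to the power $\sigma$ and letting $\sigma \to 0$ sends that eigenvalue of $f_\infty^\sigma(x)$ to $1$. Hence $\tr f_\infty^0 > 0$ on a set of positive measure, and since $\tr f_\infty^0 = \rank(f_\infty^0)$ is a constant nonnegative integer on $M$, this forces $\tr f_\infty^0 \geq 1$, i.e.\ $\rank F \leq \rank E - 1$.

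For the inequality $\rank F > 0$, I would invoke Lemma \ref{lemma3}(ii), which provides at every $x \in M$ an eigenvalue of $\rho_\eps f_\eps(x)$ bounded by $\rho_\eps$. Choose a subsequence $\eps_i \to 0$ along which $\rho_{\eps_i} f_{\eps_i} \to f_\infty$ almost everywhere (available because Rellich compactness upgrades the weak $L_1^2$ convergence to strong $L^2$ convergence, hence a.e.\ convergence of a further subsequence). Then for a.e.\ $x$, the smallest eigenvalue of $f_\infty(x)$ equals $0$, and this zero eigenvalue persists in $f_\infty^0(x)$. Therefore $\tr f_\infty^0(x) \leq \rank E - 1$ almost everywhere, and by constancy of the rank this gives $\rank F \geq 1$.

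The main obstacle is that the spectral functional calculus $A \mapsto A^\sigma$ does not commute with weak limits in general. This is handled by exploiting the monotonicity of $A \mapsto A^\sigma$ on positive self--adjoint contractions, together with the a.e.\ convergence supplied by Sobolev compactness --- precisely the scheme of \cite[Proposition 28]{Lo09} in the non--Higgs case. The Higgs field $\varphi$ does not enter the construction of $\varpi$ at all (only its preservation of $F$, which has already been established), so the argument of \cite{Lo09} transfers to the present setting without essential modification.
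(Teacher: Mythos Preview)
Your proposal is correct and follows essentially the same approach as the paper, which simply invokes \cite[Proposition~28]{Lo09} together with Lemma~\ref{lemma3} and Proposition~\ref{weak-convergence}. You have spelled out the spectral argument that underlies that reference---using $f_\infty \neq 0$ to rule out $\varpi = \id_E$ and the small eigenvalue from Lemma~\ref{lemma3}(ii),(iv) to rule out $\varpi = 0$---and correctly observed that the Higgs field plays no role in this step, so the non--Higgs argument carries over verbatim.
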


\begin{proof}
Apply the proof of \cite[Proposition 28]{Lo09}, and use Lemma
\ref{lemma3} and Proposition \ref{weak-convergence}.
\end{proof}

\begin{proposition}\label{32}
The flat subbundle $F \,=\, \varpi(E)$ is a destabilizing
subbundle,
meaning
\[
  \mu_g (F) \,\geq\, \mu_g (E) \, .
\]
\end{proposition}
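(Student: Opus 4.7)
The plan is to follow the strategy of the non-Higgs analogue \cite[Proposition 29]{Lo09}, integrating the pointwise inequality of Proposition \ref{estimate-destabilizing} over $M$ and passing to the limits $i \to \infty$ and then $\sigma \to 0$. The novelty compared to the flat-bundle case of \cite{Lo09} is that the extra term $|f^{-\sigma/2}[\varphi^\ast, f^\sigma]|^2$ appearing in Proposition \ref{estimate-destabilizing} will, in the limit, produce a $|(\id_E - \varpi) \circ \varphi \circ \varpi|^2$ contribution, matching exactly the Higgs correction on the degree side of the Chern--Weil formula for $\deg_g F$.

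First, I would apply Proposition \ref{estimate-destabilizing} to $f = f_{\eps_i}$ for the sequence $\eps_i \to 0$ from Proposition \ref{weak-convergence}, and integrate against $\omega_g^n/\nu$ over $M$. The affine Gauduchon condition $\del \dbar (\omega_g^{n-1}) = 0$ lets integration by parts eliminate the $\int_M \tr_g \del \dbar(\tr f^\sigma) \frac{\omega_g^n}{\nu}$ term. After rewriting the inequality in terms of the renormalized $f_i = \rho_{\eps_i} f_{\eps_i}$, the $\eps_i h_0(\log f, f^\sigma)$ piece is controlled by $\eps_i m_{\eps_i} \leq C$ from Corollary \ref{corollary1}(i) together with the uniform bound $f_i \leq \id_E$ from Lemma \ref{lemma3}(i), and becomes negligible in the relevant limits.

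Next, I would let $i \to \infty$ using the weak $L_1^2$ convergence $f_i \to f_\infty$ of Proposition \ref{weak-convergence}(i), so that lower semicontinuity carries the $L^2$ squares to the limit, and then let $\sigma \to 0$ using Proposition \ref{weak-convergence}(ii). The pointwise eigenvalue calculation already used at the end of the proof of Proposition \ref{estimate-destabilizing}, transported to the level of $f_\infty$, identifies $\liminf_{\sigma \to 0} |f_\infty^{-\sigma/2} \del_0(f_\infty^\sigma)|^2$ with $|\dbar \varpi|^2$ and $\liminf_{\sigma \to 0} |f_\infty^{-\sigma/2}[\varphi^\ast, f_\infty^\sigma]|^2$ with $|(\id_E - \varpi) \circ \varphi \circ \varpi|^2$, while the right-hand side converges to $-\int_M h_0(K_0^\varphi - \gamma \id_E, \id_E - \varpi) \frac{\omega_g^n}{\nu}$.

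Finally, I would combine the resulting inequality with the Chern--Weil identity for the smooth flat Higgs subbundle $F = \varpi(E)$,
\[
 n \deg_g F \, = \, \int_M \tr(\varpi K_0^\varphi) \, \frac{\omega_g^n}{\nu} - \int_M \big( |\dbar \varpi|^2 + |(\id_E - \varpi) \circ \varphi \circ \varpi|^2 \big) \, \frac{\omega_g^n}{\nu} \, ,
\]
derived in the standard way from the second-fundamental-form decomposition of $K_0^\varphi$. Substituting into the limit inequality, using $\tr \varpi = \rank F$ pointwise and $\gamma \int_M \omega_g^n/\nu = n \mu_g(E)$ from \eqref{einstein-factor}, elementary algebra collapses the two $|\dbar \varpi|^2 + |(\id_E - \varpi)\circ \varphi \circ \varpi|^2$ terms and yields $n \rank F \cdot (\mu_g(F) - \mu_g(E)) \geq 0$, i.e.\ $\mu_g(F) \geq \mu_g(E)$. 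The main technical obstacle is the $\sigma \to 0$ step: since $f_\infty^0 = \id_E - \varpi$ vanishes on $F$, the operator $f_\infty^{-\sigma/2}$ degenerates on that subbundle, so identifying the limiting expressions with the squared norms of $\dbar \varpi$ and $(\id_E - \varpi) \circ \varphi \circ \varpi$ must be done via a pointwise diagonalization of $f_\infty$ and the $L_1^2$ bounds of Proposition \ref{weak-convergence}, in direct parallel to the algebraic identity used at the end of the proof of Proposition \ref{estimate-destabilizing}.
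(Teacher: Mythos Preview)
Your overall strategy is correct and essentially matches the paper's: integrate the inequality of Proposition \ref{estimate-destabilizing}, take the double limit, and compare with a Chern--Weil identity for $F$. The paper packages things slightly differently --- it uses the \emph{non-Higgs} Chern--Weil formula with $K_0$ and $|\del_0\varpi|^2$, then converts $K_0$ to $K_0^\varphi$ and shows that the two Higgs corrections cancel via the identity $\|[\varphi^\ast,\varpi]\|_{L^2}^2 = \int_M \tr(\tr_g[\varphi,\varphi^\ast]\,\varpi)\,\frac{\omega_g^n}{\nu}$ --- but this is equivalent to your ``Higgs Chern--Weil'' packaging once one tracks the terms.

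However, there is a concrete error in your Higgs second-fundamental-form term. You write $(\id_E-\varpi)\circ\varphi\circ\varpi$ in both the limit identification and the Chern--Weil formula, but this quantity is \emph{identically zero}: it is precisely the $\varphi$-invariance condition for $F$ established in the preceding proposition. The correct term is $(\id_E-\varpi)\circ\varphi^\ast\circ\varpi$ (equivalently, its adjoint $\varpi\circ\varphi\circ(\id_E-\varpi)$, or $[\varphi^\ast,\varpi]$ after using $\varpi^2=\varpi$ and $\varpi\varphi^\ast(\id_E-\varpi)=0$). With your term set to zero, your Chern--Weil identity reads $n\deg_g F = \int_M\tr(\varpi K_0^\varphi)\,\frac{\omega_g^n}{\nu} - \|\dbar\varpi\|_{L^2}^2$, which is false in general: it overstates $\deg_g F$ by $\|[\varphi^\ast,\varpi]\|_{L^2}^2$, and the argument no longer closes. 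If you carry out the pointwise diagonalization you allude to, you will see that the $\sigma\to 0$ limit of $|f_\infty^{-\sigma/2}[\varphi^\ast,f_\infty^\sigma]|^2$ picks out exactly the matrix entries of $\varphi$ mapping $F^\perp$ to $F$, i.e.\ $|\varpi\varphi(\id_E-\varpi)|^2$, not the (vanishing) entries from $F$ to $F^\perp$. Once you swap in the correct term on both sides, the cancellation you describe does occur and the proof goes through; the paper achieves the same cancellation by first weakening $|f_i^{-\sigma/2}[\varphi^\ast,f_i^\sigma]|^2 \geq |[\varphi^\ast,\id_E-f_i^\sigma]|^2$ (using $f_i\leq\id_E$) before passing to the limit, which avoids having to analyse the degenerating factor $f_\infty^{-\sigma/2}$ directly.
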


\begin{proof}
As in \cite[proof of Proposition 29]{Lo09}, by the Chern--Weil formula we have
\[
  \mu_g (F) \,=\, \mu_g (E) + \frac{1}{sn} \int_M \big(\tr((K_0 -
\gamma
\id_E) \varpi) - |\del_0 \varpi|^2\big) \, \frac{\omega_g^n}{\nu}
\, ,
\]
where $s$ is the rank of $F$. Therefore, to complete the proof
it suffices to show that
\begin{equation}\label{as}
  \int_M \tr((K_0 - \gamma \id_E) \varpi) \,
\frac{\omega_g^n}{\nu}
\,\geq\, \int_M |\del_0 \varpi|^2 \, \frac{\omega_g^n}{\nu} \, .
\end{equation}

Using the identity $K_0^\varphi \,=\, K_0 + \tr_g [\varphi \, ,
\varphi^\ast]$ we obtain that
\begin{equation} \label{equation4}
  \int_M \tr((K_0 - \gamma \id_E) \varpi) \,
\frac{\omega_g^n}{\nu}
  = \int_M \tr((K_0^\varphi - \gamma \id_E) \varpi) \,
\frac{\omega_g^n}{\nu}
  - \int_M \tr(\tr_g [\varphi \, , \varphi^\ast] \varpi) \,
\frac{\omega_g^n}{\nu} \, .
\end{equation}
The first term in the right--hand side can be estimated as follows.
Since
\[
  \varpi \,=\, \lim_{\sigma \to 0}\limits \, \lim_{i \to \infty}\limits \, (\id_E - f_i^\sigma)
\]
strongly in $L^2$ norm, and $\tr(K_0^\varphi - \gamma \id_E) \,=\, 0$, we have
\[
  \int_M \tr((K_0^\varphi - \gamma \id_E) \varpi) \,
\frac{\omega_g^n}{\nu}
\,= \, - \lim_{\sigma \to 0} \, \lim_{i \to \infty} \int_M
\tr((K_0^\varphi - \gamma \id_E) f_i^\sigma) \, \frac{\omega_g^n}{\nu}\, ,
\]
and using equation \eqref{equation1}, we see that
\[ \begin{split}
  - \int_M \tr((K_0^\varphi - \gamma \id_E) f_i^\sigma) \, \frac{\omega_g^n}{\nu}
  = &\int_M \eps_i \tr(\log(f_{\eps_i}) f_i^\sigma) \, \frac{\omega_g^n}{\nu} \\
    &+ \int_M \tr(\tr_g \dbar(f_i^{-1} \del_0 f_i) f_i^\sigma) \, \frac{\omega_g^n}{\nu} \\
    &+ \int_M \tr(\tr_g [\varphi \, , f_i^{-1} [\varphi^\ast \, , f_i]] f_i^\sigma) \, \frac{\omega_g^n}{\nu} \, .
\end{split} \]
We estimate the first two integrals as in \cite{Lo09} and the
third integral as in the proof of Proposition
\ref{estimate-destabilizing}. Together with $f_i \leq \id_E$, it
then follows that
$$
  - \int_M \tr((K_0^\varphi - \gamma \id_E) f_i^\sigma) \, \frac{\omega_g^n}{\nu}
\,\geq\, ||\del_0 (\id_E - f_i^\sigma)||_{L^2}^2 +
\big\lVert[\varphi^\ast \, , \id_E - f_i^\sigma]\big\rVert_{L^2}^2 \, .
$$
Passing to the limit $i\longrightarrow\infty$ as
in \cite{Lo09}, we obtain the following estimate of \eqref{equation4}:
\begin{equation}\label{as2}
  \int_M \tr((K_0 - \gamma \id_E) \varpi) \,
\frac{\omega_g^n}{\nu}
  \,\geq\, ||\del_0 \varpi||_{L^2}^2 + \big\lVert [\varphi^\ast
\, ,
\varpi] \big\rVert_{L^2}^2
  - \int_M \tr(\tr_g [\varphi \, , \varphi^\ast] \varpi) \,
\frac{\omega_g^n}{\nu} \, .
\end{equation}

Now, using $\varpi^2 \,=\, \varpi$, $\varpi^\ast \,=\,
\varpi$ and $(\id_E -
\varpi) \circ \varphi \circ \varpi \,=\, 0$, one shows that
\[
  \big\lVert [\varphi^\ast \, , \varpi] \big\rVert_{L^2}^2
 \, =\,\int_M \tr(\tr_g [\varphi \, , \varphi^\ast] \varpi) \,
\frac{\omega_g^n}{\nu} \, .
\]
Therefore, the inequality in \eqref{as} follows from
the one in \eqref{as2}. This completes the proof of the
proposition.
\end{proof}

Proposition \ref{32} completes the proof of Theorem \ref{main}.

\section{Some consequences}

Theorem \ref{main} has the following corollary:

\begin{corollary} \label{polystable}
Let $M$ be a compact special affine manifold equipped with an
affine Gauduchon metric $g$, and let $(E \, , \varphi)$ be a
flat Higgs vector bundle over $M$. Then $E$ admits an affine
Yang--Mills--Higgs metric if and only if it is polystable.
Moreover, a polystable flat Higgs vector bundle over $M$ admits
a unique Yang--Mills--Higgs connection.
\end{corollary}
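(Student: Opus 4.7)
The corollary comprises three assertions: polystability implies existence of an affine Yang--Mills--Higgs metric, the converse, and uniqueness of the associated Yang--Mills--Higgs connection in the polystable case. I would treat them in turn.

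For polystability implies existence, the plan is direct: decompose $(E\, ,\varphi)\,=\,\bigoplus_{i=1}^N(E_i\, ,\varphi_i)$ into stable factors of slope $\mu_g(E)$; by Theorem \ref{main} each $(E_i\, ,\varphi_i)$ carries an affine Yang--Mills--Higgs metric $h_i$, and by equation \eqref{einstein-factor} the Einstein factors all coincide with the common value $\gamma$ determined by $\mu_g(E)$. The orthogonal direct sum $h\,=\,h_1\oplus\cdots\oplus h_N$ then satisfies $K^\varphi \,=\, \gamma\cdot\id_E$.

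For the converse, suppose $h$ is an affine Yang--Mills--Higgs metric with Einstein factor $\gamma$. Given a proper flat $\varphi$--invariant subbundle $F\,\subset\, E$ of rank $s$, the $h$--orthogonal projection $\pi \,\in\, \A^{0,0}(\End E)$ onto $F$ satisfies $\pi^2\,=\,\pi$, $\pi^\ast\,=\,\pi$, $(\id_E-\pi)\circ\dbar\pi \,=\, 0$ and $(\id_E-\pi)\circ\varphi\circ\pi \,=\, 0$ from flatness and $\varphi$--invariance of $F$. I would re--run the Chern--Weil computation at the end of the proof of Proposition \ref{32}, with the exact Yang--Mills--Higgs equation $K^\varphi \,=\, \gamma\id_E$ (equivalently $K\,=\,\gamma\id_E - \tr_g[\varphi\, ,\varphi^\ast]$) in place of the approximate equation $L_\eps(f)\,=\,0$, to obtain
\[
 \mu_g(F)-\mu_g(E) \,=\, -\frac{1}{sn}\left(\|\del^h\pi\|_{L^2}^2 + \big\|[\varphi^\ast\, ,\pi]\big\|_{L^2}^2\right)\,\leq\, 0 \, ,
\]
which establishes semistability. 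Equality forces $\del^h\pi\,=\,0$ and $[\varphi^\ast\, ,\pi]\,=\,0$; self--adjointness of $\pi$ together with metric compatibility of the Chern connection then yields $\dbar\pi\,=\,0$, and the extended $\dbar$ equals $\nabla$ in this sense, so $\pi$ is covariant constant and commutes with $\varphi$. Hence $(E\, ,\varphi)\,=\,(F\, ,\varphi|_F)\oplus(F^\perp\, ,\varphi|_{F^\perp})$ as flat Higgs bundles with $h$ splitting orthogonally, and an induction on $\rank E$ decomposes $(E\, ,\varphi)$ into stable summands of common slope. The main technical point here is to verify the cross--term identity $\int_M\tr\big(\tr_g[\varphi\, ,\varphi^\ast]\pi\big)\tfrac{\omega_g^n}{\nu}\,=\,\|[\varphi^\ast\, ,\pi]\|_{L^2}^2$ for a general $\pi$, using only the algebraic identities above, exactly as in the final computation of the proof of Proposition \ref{32}.

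For uniqueness of the Yang--Mills--Higgs connection, let $h_1$ and $h_2$ be two affine Yang--Mills--Higgs metrics, and write $h_2(s\, ,t)\,=\,h_1(f(s)\, ,t)$ for $f$ positive definite and $h_1$--self--adjoint. The proof of Proposition \ref{uniqueness}, apart from its final invocation of simplicity, yields $\del^{h_1}f\,=\,0$, $\dbar f\,=\,0$ and $[\varphi^\ast\, ,f]\,=\,0$. The change--of--connection formula \eqref{connection form} then gives $\del^{h_2}\,=\,\del^{h_1}+f^{-1}\del^{h_1}f\,=\,\del^{h_1}$, so $d^{h_2}\,=\,d^{h_1}$; similarly $\varphi^{\ast h_2}\,=\,f^{-1}\varphi^{\ast h_1}f\,=\,\varphi^{\ast h_1}$ thanks to $[\varphi^\ast\, ,f]\,=\,0$. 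Consequently the full Yang--Mills--Higgs connection $d^h+\varphi+\varphi^\ast$ on $E$ coincides for the two metrics, proving uniqueness.
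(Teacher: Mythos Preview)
Your argument is correct. The ``if'' direction and the semistability/splitting computation in the ``only if'' direction match the paper's proof, just packaged via the projection $\pi$ and the Chern--Weil identity from Proposition~\ref{32} rather than via the second fundamental form $A\in\A^{1,0}(\Hom(F,F^\perp))$ and the off--diagonal piece $\widetilde\varphi$ of $\varphi^\ast$; these are the same quantities, since $|\del^h\pi|^2=|A|^2$ and $|[\varphi^\ast,\pi]|^2=|\widetilde\varphi|^2$ pointwise.

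The uniqueness argument, however, is genuinely different from the paper's. The paper proves uniqueness by first reducing to the stable summands: it uses the equality case of the ``only if'' computation to show that any Yang--Mills--Higgs metric respects a given polystable decomposition, and then invokes Proposition~\ref{uniqueness} (together with Proposition~\ref{stable-simple}) on each stable factor. Your route is more direct: you observe that the proof of Proposition~\ref{uniqueness} yields $\del^{h_1}f=0$, $\dbar f=0$ and $[\varphi^{\ast h_1},f]=0$ \emph{before} simplicity is ever used, and then read off from \eqref{connection form} that $d^{h_2}=d^{h_1}$ and $\varphi^{\ast h_2}=f^{-1}\varphi^{\ast h_1}f=\varphi^{\ast h_1}$. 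This avoids any appeal to the polystable decomposition or to simplicity, and gives uniqueness of the connection in one stroke; the paper's approach, by contrast, also yields the finer statement that the Yang--Mills--Higgs metric itself is block--diagonal with respect to any stable decomposition.
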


\begin{proof}
The ``if'' part follows immediately from Theorem \ref{main}.

For the ``only if'' part, assume that $(E \, , \varphi)$ admits
an affine Yang--Mills--Higgs metric $h$. Let $F$ be a flat
subbundle of $E$ which is preserved by the Higgs field
$\varphi$.

The flat connection on $E$ will be denoted by $\nabla$. Let
$\nabla_F$ be the flat connection on $F$ induced by
$\nabla$, and let $h_F$ be the Hermitian metric on $F$ induced by
$h$. Then for any section $s$ of $F$, we have
\[
  \del^{\nabla, h} s \,=\, \del^{\nabla_F, h_F} s + A(s) \, ,
\]
where $A \,\in\, \A^{1,0}(\Hom(F \, , F^\perp))$ is the second
fundamental form, and $\del^{\nabla, h}$ (respectively,
$\del^{\nabla_F, h_F}$) is the component of type $(1\, ,0)$ of the
extended Hermitian connection on $E$ (respectively, $F$) with respect to
$h$ (respectively, $h_F$). Analogously, if
$\varphi_F$ is the flat Higgs
field on $F$ induced by $\varphi$, we write
\[
  \varphi^\ast(s) = \varphi_F^\ast(s) + \widetilde \varphi(s) \, ,
\]
where $\varphi^\ast$ and $\varphi_F^\ast$ are the adjoints with
respect to $h$ and $h_F$, respectively, and $\widetilde \varphi$
is a $(0 \, , 1)$ form with values in $\Hom(F \, , F^\perp)$.

To complete the proof of the ``only if'' part, it suffices to show
that
$\mu_g (F) \,\leq\, \mu_g (E)$ with the equality holding if
and only if $A$ and $\widetilde \varphi$ vanish identically.

Denoting by $s$ the rank of $F$, we compute
\[
\mu_g (F) \,=\, \mu_g (E) - \frac{1}{sn} \int_M |A|^2 \,
\frac{\omega_g^n}{\nu} - \frac{1}{sn} \int_M |\widetilde \varphi|^2 \, \frac{\omega_g^n}{\nu} \, ,
\]
which implies that $\mu_g (F) \,\leq\, \mu_g (E)$ with the equality
holding if
and only if $A$ and $\widetilde \varphi$ vanish identically.

To prove the uniqueness of the Yang--Mills--Higgs connection,
first note that a stable flat Higgs bundle on $M$ admits a
unique Yang--Mills--Higgs connection, because any
two Yang--Mills--Higgs metrics on it differ by a constant
scalar (see Proposition \ref{uniqueness} and Proposition
\ref{stable-simple}). Write a polystable flat Higgs bundle
$(E \, , \varphi)$ as a direct sum of stable flat Higgs bundles.
It was shown above that a
Yang--Mills--Higgs connection on $(E \, , \varphi)$
is the direct sum of Yang--Mills--Higgs connections on the
stable direct summands. Therefore, $(E \, , \varphi)$ admits
a unique Yang--Mills--Higgs connection.
\end{proof}

Let us observe that the above results also hold for flat
real Higgs bundles.

\begin{definition} \label{real1}
Let $(E \, , \varphi)$ be a flat real Higgs bundle on a compact special affine manifold $M$ equipped with an affine Gauduchon metric $g$.
\begin{enumerate}
\item[(i)] $(E \, , \varphi)$ is called {\em $\RR$--stable\/}
(respectively, {\em $\RR$--semistable}) if for every flat real subbundle
$F$ of $E$, with $0 < \rank F < \rank E$, which is preserved by
$\varphi$, we have
\[
  \mu_g (F) \,<\, \mu_g (E) \quad
  \text{(respectively, }
  \mu_g (F)\, \leq\, \mu_g (E)
  \text{)} \, .
\]
\item[(ii)] $(E \, , \varphi)$ is called {\em $\RR$--polystable\/} if
\[
(E \, , \varphi)\, =\, \bigoplus_{i=1}^N \, (E_i \, , \varphi_i)\, ,
\]
where each $(E_i \, , \varphi_i)$ is an $\RR$--stable flat real Higgs
bundle with $\mu_g (E_i)\, = \,\mu_g (E)$.
\end{enumerate}
\end{definition}

\begin{corollary} \label{real2}
Let $M$ be a compact special affine manifold
equipped with an affine Gauduchon metric, and let $(E \, ,
\varphi)$ be a flat real Higgs vector bundle over $M$. Then $(E
\, , \varphi)$ admits an affine Yang--Mills--Higgs metric if and
only if it is $\RR$--polystable. Moreover, a polystable flat
real Higgs vector bundle over $M$ admits
a unique Yang--Mills--Higgs connection.
\end{corollary}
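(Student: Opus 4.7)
The plan is to deduce the real version of the main theorem from its complex counterpart (Corollary \ref{polystable}) via complexification. Starting from a flat real Higgs bundle $(E\, ,\varphi)$ over $M$, form the complexification $E_\CC \, :=\, E\otimes_\RR \CC$ with the induced flat connection and the $\CC$--linear extension $\varphi_\CC$ of $\varphi$. Complex conjugation $\sigma\colon E_\CC\to E_\CC$ is a flat antilinear involution commuting with $\varphi_\CC$, and Euclidean metrics on $E$ correspond bijectively with $\sigma$--invariant Hermitian metrics on $E_\CC$; under this bijection, the Yang--Mills--Higgs equation \eqref{ymh} for $(E\, ,\varphi)$ is the restriction of the equation for $(E_\CC\, ,\varphi_\CC)$ to $\sigma$--invariant metrics.

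The key step is to prove that $(E\, ,\varphi)$ is $\RR$--polystable in the sense of Definition \ref{real1} if and only if $(E_\CC\, ,\varphi_\CC)$ is polystable in the sense of Definition \ref{def stable}. In one direction, any destabilizing complex flat $\varphi_\CC$--invariant subbundle $F\subset E_\CC$ can be replaced by one of the $\sigma$--invariant subbundles $F+\sigma(F)$ or $F\cap\sigma(F)$; these are flat, $\varphi_\CC$--invariant, and correspond to real flat $\varphi$--invariant subbundles of $E$. Additivity of degree in the exact sequence $0\to F\cap\sigma(F)\to F\oplus\sigma(F)\to F+\sigma(F)\to 0$, together with the relation $\deg_g(\sigma(F))\,=\, -\deg_g(F)$ coming from the antilinearity of $\sigma$ (which identifies $\sigma(F)$ with the conjugate bundle $\overline{F}$), forces at least one of $F+\sigma(F)$ and $F\cap\sigma(F)$ to have slope at least $\mu_g(E_\CC)$, thus transporting the destabilizing phenomenon into the real setting. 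In the other direction, any destabilizing real flat $\varphi$--invariant subbundle of $E$ complexifies to a destabilizing $\sigma$--invariant complex subbundle of $E_\CC$. Iterating these arguments produces polystable decompositions in one setting from those in the other.

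Once this equivalence is in place, both implications of Corollary \ref{real2} follow from Corollary \ref{polystable}. If $(E\, ,\varphi)$ is $\RR$--polystable, then $(E_\CC\, ,\varphi_\CC)$ is polystable and admits an affine Yang--Mills--Higgs metric $h_\CC$ by Corollary \ref{polystable}. Since $\sigma$ is flat and commutes with $\varphi_\CC$, the conjugate metric $h_\CC^\sigma(u\, ,v)\,:=\, \overline{h_\CC(\sigma u\, ,\sigma v)}$ is again an affine Yang--Mills--Higgs metric, so by averaging $h_\CC$ with $h_\CC^\sigma$ along the polystable decomposition of $E_\CC$---using Proposition \ref{uniqueness} to rescale on stable summands fixed by $\sigma$ and pairing up stable summands interchanged by $\sigma$---one obtains a $\sigma$--invariant affine Yang--Mills--Higgs metric whose restriction to $E$ is the desired metric. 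Conversely, any affine Yang--Mills--Higgs metric on $(E\, ,\varphi)$ extends $\CC$--linearly to a $\sigma$--invariant one on $(E_\CC\, ,\varphi_\CC)$, so Corollary \ref{polystable} yields polystability of $(E_\CC\, ,\varphi_\CC)$, and the stability equivalence gives $\RR$--polystability of $(E\, ,\varphi)$. Uniqueness of the Yang--Mills--Higgs connection on an $\RR$--polystable bundle is obtained by complexifying any two Yang--Mills--Higgs metrics on $E$ and invoking the uniqueness part of Corollary \ref{polystable}.

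The principal obstacle lies in the stability equivalence, particularly in tracking how $\sigma$ permutes the stable complex summands of $E_\CC$: some summands are individually $\sigma$--invariant (and descend to $\RR$--stable real summands), while others occur in conjugate pairs interchanged by $\sigma$ (whose sum descends to an $\RR$--stable real summand). Orchestrating the $\sigma$--invariant Yang--Mills--Higgs metric correctly along this orbit structure is the only genuinely subtle point; the remainder of the argument is formal $\sigma$--equivariance of the complex theory.
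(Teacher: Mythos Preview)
Your overall strategy---complexify, transfer polystability across the real/complex dictionary, apply Corollary \ref{polystable}, and then use $\sigma$--equivariance to descend the metric---is exactly what the paper intends (it simply cites \cite[Section 11]{Lo09}), and most of the steps you outline are sound.

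There is, however, a genuine error in the stability transfer. You claim $\deg_g(\sigma(F)) = -\deg_g(F)$, invoking the identification of $\sigma(F)$ with the conjugate bundle $\overline{F}$. That sign is correct for topological Chern classes on a complex manifold, but it is wrong in the present affine setting. Here $M$ is a real manifold and the operators $\del,\dbar$ on $\A^{p,q}$ are real; in a local flat frame one has
\[
  c_1(F,h_F) \,=\, \tr\Omega_{h_F} \,=\, \dbar\del\log\det(h_F)_{\alpha\bar\beta}\, ,
\]
and $\det(h_F)_{\alpha\bar\beta}$ is a positive real function, so $c_1(F,h_F)$ is a \emph{real} element of $\A^{1,1}$. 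In the frame $\{\sigma e_\alpha\}$ for $\sigma(F)$ the metric matrix is the complex conjugate $\overline{(h_F)_{\alpha\bar\beta}}$, whose determinant equals $\det(h_F)_{\alpha\bar\beta}$; hence $c_1(\sigma(F),h) = c_1(F,h)$ and
\[
  \deg_g(\sigma(F)) \,=\, \deg_g(F)\, .
\]
Your stated relation would force every $\sigma$--invariant flat subbundle---in particular every complexification of a real flat subbundle of $E$---to have degree zero, which is false in general in the Gauduchon setting. With the corrected equality $\mu_g(\sigma(F))=\mu_g(F)$, the exact--sequence argument you wrote proceeds verbatim: from
\[
\deg_g(F\cap\sigma(F))+\deg_g(F+\sigma(F)) = 2\deg_g(F),\qquad
\rank(F\cap\sigma(F))+\rank(F+\sigma(F)) = 2\rank(F),
\]
one of the two $\sigma$--invariant subbundles has slope at least $\mu_g(F)\geq\mu_g(E_\CC)$, yielding the desired real destabilizer. (Note also that $F\cap\sigma(F)$ and $F+\sigma(F)$ are genuine flat subbundles, since flat subbundles correspond to sub--local--systems, which are closed under intersection and sum.)

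The remainder of your outline is fine. For producing a $\sigma$--invariant Yang--Mills--Higgs metric you can streamline the averaging step: the Yang--Mills--Higgs \emph{connection} on $(E_\CC,\varphi_\CC)$ is unique by Corollary \ref{polystable}, so it is automatically $\sigma$--invariant; a compatible $\sigma$--invariant metric is then obtained by rescaling on the stable summands exactly as you describe.
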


\begin{proof}
This follows from Corollary \ref{polystable} as
in \cite[Section 11]{Lo09}.
\end{proof}

\subsection{Flat Higgs $G$--bundles}\label{se-G}

Any flat (real or complex) vector bundle over a compact affine
manifold equipped with an affine Gauduchon metric has a
unique Harder--Narasimhan filtration \cite{BL}. Using it and
the above mentioned correspondence in \cite{Lo09},
the following can be proved:

\begin{theorem}[\cite{BL}]\label{tBL}
Let $G$ be a reductive complex affine algebraic group.
Let $M$ be a compact special affine manifold equipped with an
affine Gauduchon metric, and let $E_G$ be a
flat principal $G$--bundle over $M$. Then $E$ admits an affine
Yang--Mills connection if and only if $E_G$ is polystable.
Further, the Yang--Mills connection on a polystable flat
bundle is unique.
\end{theorem}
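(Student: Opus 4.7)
The plan is to reduce the principal bundle statement to the vector bundle correspondence already established in \cite{Lo09}, using the adjoint representation together with Ramanathan's characterization of (poly)stability of principal $G$-bundles in terms of reductions to parabolic subgroups and dominant characters. Since $G$ is reductive, a flat principal $G$-bundle $E_G$ is polystable precisely when it admits a flat reduction $E_L$ to a Levi subgroup $L$ of some parabolic $P \subset G$ such that $E_L$ is stable and every associated line bundle $E_P \times_P \CC_\chi$ for a dominant character $\chi$ of $P$ trivial on $L$ has degree zero. This definition, together with the slope theory of flat vector bundles on affine Gauduchon manifolds developed in \cite{Lo09}, makes the analogue of Definition~\ref{def stable} available in the principal case.

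For the direction polystable $\Longrightarrow$ Yang--Mills, I would first show that polystability of $E_G$ forces polystability of the flat adjoint vector bundle $\Ad(E_G) := E_G \times_G \mathfrak{g}$: a flat subbundle destabilizing $\Ad(E_G)$ would, by the standard Ramanathan argument, yield a destabilizing reduction of $E_G$ to a parabolic. Invoking \cite[Theorem 1]{Lo09} then produces an affine Yang--Mills metric on $\Ad(E_G)$. The key step is to upgrade this metric to a reduction of $E_G$ to a maximal compact subgroup $K \subset G$: because the Hermitian--Einstein connection on $\Ad(E_G)$ preserves the fiberwise Lie bracket (the bracket being covariant constant in the flat structure), one recovers a $K$-structure on $E_G$ whose induced Chern connection satisfies the affine Yang--Mills equation.

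For the converse, Yang--Mills $\Longrightarrow$ polystable, I would use the Harder--Narasimhan filtration for flat vector bundles over compact special affine Gauduchon manifolds established in \cite{BL}, applied to associated vector bundles of $E_G$. If $E_G$ is not polystable, then for some representation $\rho \colon G \to \mathrm{GL}(V)$ the associated flat vector bundle $E_G \times_G V$ admits a destabilizing flat subbundle. A Chern--Weil and second-fundamental-form computation of the type carried out in the proof of Corollary \ref{polystable}, applied to the Hermitian--Einstein metric on $E_G \times_G V$ induced by the Yang--Mills reduction of $E_G$, then yields the contradiction $\mu_g(F) \leq \mu_g(E_G \times_G V)$ strictly. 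Uniqueness is treated exactly as in the final paragraph of Corollary \ref{polystable}, reducing to the stable case via the polystable decomposition and appealing to the principal-bundle analogue of Proposition~\ref{uniqueness}.

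The main obstacle I anticipate is establishing the precise dictionary between reductions of $E_G$ to parabolic and Levi subgroups and sub-objects of associated flat vector bundles in the affine Gauduchon setting, since one cannot invoke Kähler identities and must instead work throughout with the restriction formalism from $M^\CC$ to $M$ and division by the covariant constant volume form $\nu$ developed in Section~2. A secondary technical point, shouldered by \cite{BL}, is verifying that the Harder--Narasimhan reduction is genuinely flat (not merely smooth), so that a non-polystable $E_G$ produces a flat destabilizing subbundle of an associated representation; this is precisely the ingredient the above plan imports as a black box.
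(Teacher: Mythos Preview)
The paper does not actually prove Theorem~\ref{tBL}; it is quoted from \cite{BL}, with only the one-sentence indication that the proof combines the Harder--Narasimhan filtration for flat vector bundles (also established in \cite{BL}) with the vector-bundle correspondence of \cite{Lo09}. Your proposal uses precisely these two ingredients, so at the level of overall strategy you are aligned with what the paper points to.

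That said, your forward direction (polystable $\Longrightarrow$ Yang--Mills) has a genuine gap. You obtain an affine Hermitian--Einstein metric on $\Ad(E_G)$ and then assert that ``because the Hermitian--Einstein connection on $\Ad(E_G)$ preserves the fiberwise Lie bracket \ldots\ one recovers a $K$-structure on $E_G$.'' This does not work as stated. First, for reductive $G$ the adjoint representation has kernel the center, so $\Ad(E_G)$ does not determine $E_G$ and a metric on $\Ad(E_G)$ cannot by itself produce a reduction of $E_G$ to $K$. Second, even for adjoint $G$, the Lie bracket on $\Ad(E_G)$ is covariant constant for the \emph{flat} connection, hence is preserved by \emph{any} Chern connection coming from \emph{any} Hermitian metric compatible with the holomorphic (i.e., flat) structure; preservation of the bracket does not single out those metrics arising from a $K$-reduction. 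The standard route, which is what \cite{BL} carries out, is to take a faithful representation $G\hookrightarrow \mathrm{GL}(V)$, show that the associated flat bundle $E_V$ is polystable, obtain its Hermitian--Einstein metric from \cite{Lo09}, and then use uniqueness of that metric together with the $G$-module structure on $V$ (equivalently, the algebra of $G$-equivariant endomorphisms of $E_V$) to conclude that the metric is induced by a reduction of $E_G$ to $K$. Your sketch should replace the $\Ad$-bundle step by this faithful-representation argument; the obstacle you flag about the parabolic/Levi dictionary is real, but it is secondary to the missing step just described.
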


The above result remains valid if $G$ is a reductive
affine algebraic group over $\mathbb R$ of split type \cite{BL}.

The proof of the existence and uniqueness of the
Harder--Narasimhan filtration of a flat vector bundle goes
through for a flat (real or complex) Higgs vector bundle.
So a (real or complex) flat Higgs vector bundle over a
compact affine manifold equipped with an affine Gauduchon metric
has a unique Harder--Narasimhan filtration.

Let $G$ be a reductive algebraic group. Let $M$ be
a compact affine manifold equipped with an
affine Gauduchon metric $g$.
Let $E_G$ be a principal $G$--bundle over $M$
equipped with a flat connection $\nabla^G$. Let
$$
\text{ad}(E_G)\, :=\, E_G\times^G \text{Lie}(G)
$$
be the adjoint vector bundle over $M$ associated to $E_G$.
Since the adjoint action of $G$ on $\text{Lie}(G)$ preserves
the Lie algebra structure, each fiber of $\text{ad}(E_G)$
is a Lie algebra isomorphic to $\text{Lie}(G)$.
If $\varphi$ is a smooth section of $T^\ast M \tensor
\text{ad}(E_G)$, then using the Lie algebra structure of the
fibers of $\text{ad}(E_G)$, and the obvious projection
$T^\ast M\otimes  T^\ast M\, \longrightarrow\, \bigwedge^2
T^\ast M$, we get a smooth section of
$(\bigwedge^2 T^\ast M) \tensor \text{ad}(E_G)$, which we will
denote by $[\varphi \, , \varphi]$.

The flat connection $\nabla^G$ on $E_G$ induces a flat connection
on
$\text{ad}(E_G)$; this flat connection on $\text{ad}(E_G)$
will be denoted by $\nabla^{\rm ad}$. Let
$$
\widetilde{\nabla}^{\rm ad}\, :\,  T^\ast M \tensor
\text{ad}(E_G)\,
\longrightarrow\, T^\ast M \tensor T^\ast M \tensor\text{ad}(E_G)
$$
be the flat connection on $T^\ast M \tensor \text{ad}(E_G)$
defined by $\nabla^{\rm ad}$ and the connection $D^*$ on
$T^\ast M$.

A \textit{Higgs field\/} on the flat principal $G$--bundle $(E_G\,
,\nabla^G)$ is a smooth section $\varphi$ of $T^\ast M \tensor
\text{ad}(E_G)$ such that
\begin{enumerate}
\item the section $\varphi$ is flat with respect to the
connection $\widetilde{\nabla}^{\rm ad}$ on
$T^\ast M \tensor \text{ad}(E_G)$, and

\item $[\varphi \, , \varphi] \,=\, 0$.
\end{enumerate}
A \textit{Higgs $G$--bundle\/} is a flat principal $G$--bundle
together with a Higgs field on it. (See \cite{Si2} for Higgs
$G$--bundles on complex manifolds.)

Let $(E_G\, ,\nabla^G\, ,\varphi)$ be a Higgs $G$--bundle on $M$.
Fix a maximal compact subgroup $K\, \subset\, G$. Given a
$C^\infty$ reduction of structure group $E_K\, \subset\, E_G$,
we have a natural connection $\nabla^{E_K}$ on the principal
$K$--bundle $E_K$ constructed using $\nabla^G$; the
connection on $E_G$ induced by $\nabla^{E_K}$ will also be
denoted by $\nabla^{E_K}$. Given a
$C^\infty$ reduction of structure group $E_K\, \subset\, E_G$
to $K$, we may define as before the $(1\, ,1)$--part of the
extended curvature
$$
\overline{\partial} \theta + [\varphi \, , \varphi^\ast]\, ,
$$
which is a $(1\, ,1)$--form with values in $\text{ad}(E_G)$;
as before, $\theta$ is a $(1\, ,0)$--form with values in
$\text{ad}(E_G)$.

The reduction $E_K$ is called a \textit{Yang--Mills--Higgs\/}
reduction of $(E_G\, ,\nabla^G\, ,\varphi)$ if
there is an element $\gamma$ of the center of
$\text{Lie}(G)$ such that the section
$$
\tr_g(\overline{\partial} \theta + [\varphi \, , \varphi^\ast])
$$
of $\text{ad}(E_G)$ coincides with the
one given by $\gamma$. If $E_K$ is a Yang--Mills--Higgs
reduction, then the connection $\nabla^{E_K}$ on $E_G$ is
called a \textit{Yang--Mills--Higgs connection}.

The proof of Theorem \ref{tBL} (see \cite{BL}) gives the following:

\begin{corollary}\label{cor-G}
Let $M$ be a compact special affine manifold
equipped with an affine Gauduchon metric.
Let $G$ be either a reductive affine algebraic group over
$\mathbb C$ or a reductive
affine algebraic group over $\mathbb R$ of split type.
Then a flat Higgs $G$--bundle $(E_G \, , \varphi)$ over $M$
admits a Yang--Mills--Higgs connection if and only
if $(E_G \, , \varphi)$ is polystable.
Further, the Yang--Mills--Higgs connection on a polystable flat
Higgs $G$--bundle is unique.
\end{corollary}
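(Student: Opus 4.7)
The plan is to imitate the proof of Theorem \ref{tBL} given in \cite{BL}, replacing its input from \cite{Lo09} by the Higgs analogues established in Corollary \ref{polystable} and Corollary \ref{real2}. First I would make precise the notion of polystability for the flat Higgs $G$--bundle via reductions of structure group: $(E_G \, , \varphi)$ is semistable if for every parabolic subgroup $P \,\subsetneq\, G$ and every reduction $E_P \,\subset\, E_G$ with $\varphi$ a section of $T^\ast M \tensor \text{ad}(E_P)$, and for every antidominant character $\chi$ of $P$, the associated flat line bundle $E_P(\chi)$ has non--negative degree; polystability is the analogous condition requiring a reduction to a Levi factor whose induced pieces are stable flat Higgs bundles of the same slope.

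For the ``only if'' direction, given a Yang--Mills--Higgs reduction $E_K \,\subset\, E_G$, I would run the Chern--Weil calculation from the proof of Corollary \ref{polystable} at the $G$--bundle level: any $\varphi$--preserving parabolic reduction $E_P$ together with the induced reduction to $K \cap P$ produces a second fundamental form $A$ and an off--diagonal Higgs contribution $\widetilde\varphi$, and $\deg_g E_P(\chi)$ is expressed as a non--positive linear combination of $-\int_M |A|^2$ and $-\int_M |\widetilde\varphi|^2$. This forces the slope inequality, and in the case of equality it gives $A \,=\, 0 \,=\, \widetilde\varphi$, yielding the $\varphi$--invariant splitting required for polystability. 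For the ``if'' direction, the strategy of \cite{BL} proceeds via a faithful representation $G \,\hookrightarrow\, \text{GL}(V)$ and the unique Higgs Harder--Narasimhan filtration, whose existence for flat Higgs vector bundles is noted in the paper; polystability of $(E_G \, , \varphi)$ implies polystability of the adjoint Higgs vector bundle $(\text{ad}(E_G) \, , \text{ad}(\varphi))$ with $\text{ad}(\varphi)(\cdot) \,=\, [\varphi \, , \cdot]$. Applying Corollary \ref{polystable} in the complex case (or Corollary \ref{real2} in the split real case) yields a Yang--Mills--Higgs metric on $\text{ad}(E_G)$; a descent argument as in \cite{BL} shows this metric is induced by a reduction $E_K \,\subset\, E_G$ to $K$, and the Einstein condition translates into the Yang--Mills--Higgs equation for $(E_G \, , \varphi)$ because the Einstein factor must lie in the centre of $\text{Lie}(G)$.

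The main obstacle is the descent step just described: one must verify both that polystability of $(E_G \, , \varphi)$ is equivalent to polystability of $(\text{ad}(E_G) \, , \text{ad}(\varphi))$, and that a Yang--Mills--Higgs metric on the latter actually comes from a reduction to $K$. This is precisely where the hypothesis that $G$ is reductive complex or reductive real of split type enters, since otherwise the centre of $G$ together with the Cartan involution can obstruct the descent. Uniqueness of the Yang--Mills--Higgs connection then follows from Proposition \ref{uniqueness} applied summand by summand to the polystable decomposition: two Yang--Mills--Higgs reductions would induce two Yang--Mills--Higgs metrics on $\text{ad}(E_G)$ which, on each stable Higgs summand, are proportional by a positive scalar, and this rigidity forces the two $K$--reductions to agree.
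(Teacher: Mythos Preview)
Your proposal is correct and follows essentially the same approach as the paper: the paper's proof is simply the one--line remark that ``the proof of Theorem \ref{tBL} (see \cite{BL}) gives the following,'' i.e.\ rerun the argument of \cite{BL} with the vector--bundle input from \cite{Lo09} replaced by Corollary \ref{polystable} and Corollary \ref{real2}, which is exactly what you outline in greater detail.
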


\subsection{A Bogomolov inequality}

As before, $M$ is a compact special affine manifold
of dimension $n$ equipped with a
Gauduchon metric $g$. We assume that $g$ is {\em astheno--K\"ahler}, meaning
\begin{equation}\label{aK}
\partial\overline{\partial}(\omega^{n-2}_g)\, =\,0\, ,
\end{equation}
where $\omega_g$ is defined in \eqref{deomg}
(see \cite[p.\ 246]{JY}).

\begin{proposition}\label{prop-Bo}
Let $(E\, ,\varphi)$ be a semistable flat Higgs vector bundle
of rank $r$ over $M$. Then
$$
\int_M \frac{c_2(\End(E)) \wedge \omega^{n-2}_g}{\nu}\, =\,
\int_M \frac{(2r\cdot c_2(E) -
(r-1)c_1(E)^2) \wedge \omega^{n-2}_g}{\nu}\, \geq\, 0\, .
$$
\end{proposition}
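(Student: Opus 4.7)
The plan is to combine the existence theorem for affine Yang--Mills--Higgs metrics (Corollary \ref{polystable}) with a pointwise L\"ubke--Simpson inequality, using the astheno-K\"ahler condition \eqref{aK} both to make the left-hand side metric-independent and to legitimize the required integration by parts. The equality in the proposition is the universal identity $c_2(\End E) \,=\, 2r\, c_2(E) - (r-1)\, c_1(E)^2$ at the level of Chern--Weil forms and needs no further argument.

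First I would reduce to the polystable case. A semistable flat Higgs bundle $(E\, ,\varphi)$ admits a Jordan--H\"older filtration by flat $\varphi$--invariant subbundles with stable quotients $(Q_i\, ,\varphi_i)$ of the same slope, by the argument cited from \cite{BL} just before Theorem \ref{tBL}. A standard Chern--Weil calculation for extensions then shows that $\int_M c_2(\End E)\wedge\omega_g^{n-2}/\nu$ equals the same integral for the associated graded $\bigoplus_i (Q_i\, ,\varphi_i)$ plus correction terms of the form $\int_M (c_1(Q_i)/r_i - c_1(Q_j)/r_j)^2 \wedge \omega_g^{n-2}/\nu$, which are non-negative by Hodge--Riemann positivity on primitive $(1\, ,1)$--forms under the astheno-K\"ahler hypothesis, since the equal-slope condition makes each $c_1(Q_i)/r_i - c_1(Q_j)/r_j$ primitive relative to $\omega_g^{n-1}$. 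Hence it suffices to prove the inequality for polystable bundles, and then, by additivity of the integrand under direct sums, for each stable summand.

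For a stable flat Higgs bundle, take an affine Yang--Mills--Higgs metric $h$ from Corollary \ref{polystable}. The extended curvature $\Omega^\varphi$ of $(E\, ,\varphi\, ,h)$ satisfies the Einstein condition $\tr_g(\dbar\theta + [\varphi\, ,\varphi^\ast]) \,=\, \gamma\cdot\id_E$. Decomposing $\Omega^\varphi \,=\, \Omega^\varphi_0 + \frac{1}{r}(\tr\Omega^\varphi)\cdot\id_E$ into trace-free and trace parts and expanding the Chern--Weil polynomials, one obtains the form-level identity
\[
 2r\, c_2(E\, ,h) \,-\, (r-1)\, c_1(E\, ,h)^2 \,=\, -\, r\, \tr(\Omega^\varphi_0 \wedge \Omega^\varphi_0) \,+\, (\text{$\del\dbar$--exact})\, ,
\]
where the exact correction measures the difference between using the Chern curvature $\Omega$ and the extended pseudo-curvature $\Omega^\varphi$ as Chern--Weil representatives; this correction integrates to zero against $\omega_g^{n-2}/\nu$ by \eqref{aK}.

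It therefore suffices to verify the pointwise inequality $-\tr(\Omega^\varphi_0 \wedge \Omega^\varphi_0) \wedge \omega_g^{n-2} \,\geq\, 0$ on $M$. Decomposing the $(1\, ,1)$--component of $\Omega^\varphi_0$ relative to $\omega_g$ into its $\omega_g$--trace and primitive parts, the Einstein equation kills the trace part; combined with the $(2\, ,0)$ and $(0\, ,2)$ components $\del^h \varphi$ and $\dbar(\varphi^\ast)$, the L\"ubke--Simpson algebraic calculation (see \cite[Ch.~IV]{LT95} for the non--Higgs case and Simpson's Higgs adaptation) rewrites $-\tr(\Omega^\varphi_0 \wedge \Omega^\varphi_0) \wedge \omega_g^{n-2}$ as a non-negative multiple of $\omega_g^n$ built from the pointwise norms $|\del^h\varphi|^2$, $|\dbar(\varphi^\ast)|^2$ and the primitive trace-free part of $\dbar\theta + [\varphi\, ,\varphi^\ast]$. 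The main obstacle I foresee is the careful bookkeeping of the $\del\dbar$--exact corrections that arise when passing from the Chern curvature to the extended pseudo-curvature, and verifying that each such correction is annihilated by integration against $\omega_g^{n-2}/\nu$ via the astheno-K\"ahler hypothesis; the underlying algebraic Hodge--Riemann positivity transfers without change from $M^\CC$ to $M$ via the restriction map \eqref{restriction}, so this integration-by-parts step is really the only genuinely new ingredient beyond the classical argument.
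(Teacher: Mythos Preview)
Your overall strategy matches the paper's: reduce to the polystable case via a Jordan--H\"older filtration, then for each stable piece apply the affine Yang--Mills--Higgs metric from Corollary~\ref{polystable} together with a pointwise L\"ubke--Simpson inequality. Your treatment of the stable case is in fact more explicit than the paper's, which simply cites \cite[Proposition~3.4]{Si88} and \cite{LYZ} for the statement that, once a Yang--Mills--Higgs metric is in place, the integrand coincides with a pointwise nonnegative $n$--form.

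There is, however, a genuine gap in your reduction argument. The assertion that ``the equal-slope condition makes each $c_1(Q_i)/r_i - c_1(Q_j)/r_j$ primitive relative to $\omega_g^{n-1}$'' is not correct as stated: equal slope is the \emph{integral} condition $\int_M \alpha \wedge \omega_g^{n-1}/\nu = 0$, whereas primitivity is the \emph{pointwise} condition $\alpha \wedge \omega_g^{n-1} = 0$. Without a $\del\dbar$--lemma you cannot pass from one to the other, so Hodge--Riemann does not apply directly. (One could try to adjust the Hermitian metrics on the $Q_i$ so that $\alpha$ becomes pointwise primitive---this amounts to a Poisson equation whose solvability is precisely the equal-slope condition---but you must then also check that $\int_M \alpha^2 \wedge \omega_g^{n-2}/\nu$ is unchanged by such adjustments, which is a separate astheno--K\"ahler integration-by-parts argument you have not supplied.) In addition, the claim that the discrepancy between the discriminant integral for $E$ and for its associated graded consists exactly of such $\alpha^2$--terms is not obvious: for a genuine extension the Chern--Weil forms of $E$ pick up second-fundamental-form contributions absent from the direct sum, and one must verify that these are killed by integration against $\omega_g^{n-2}/\nu$. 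The paper's own proof is extremely terse at this step---it simply asserts that the inequality for each $E_i/E_{i-1}$ implies it for $E$---so completing this reduction carefully is real work; your sketch does not yet do it.
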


\begin{proof}
First assume that $(E\, ,\varphi)$ is a polystable
flat Higgs vector bundle. Consider an affine Yang--Mills--Higgs
metric $h$ on $E$ given by Theorem \ref{main}. Then the integral of the $n$--form
$$
\frac{(2r\cdot c_2(E \,, h) -
(r-1)c_1(E \,, h)^2) \wedge \omega^{n-2}_g}{\nu}
$$
on $M$ coincides with the integral of a pointwise nonnegative $n$--form
(see \cite[p.\ 878--879, Proposition 3.4]{Si88}
and also \cite[p.\ 107]{LYZ} for the computation); here $\nu$
is the covariant constant volume form. Therefore,
$$
\int_M \frac{(2r\cdot c_2(E \,, h) -
(r-1)c_1(E \,, h)^2) \wedge \omega^{n-2}_g}{\nu}\, \geq\, 0\, .
$$
Hence the inequality in the proposition is proved for polystable
Higgs vector bundles.

If the flat Higgs bundle
$(E\, ,\varphi)$ is semistable, then there is a filtration of flat subbundles
$$
0\,=\, E_0\, \subset\, E_1\, \subset\, \cdots
\, \subset\, E_{\ell-1} \, \subset\, E_\ell\, =\, E
$$
such that
\begin{itemize}
\item $\varphi(E_i)\, \subset\, T^*M\otimes E_i\, \subset\, T^*M\otimes E$
for all $i\, \in\, [0 \,, \ell]$,

\item the quotient $E_i/E_{i-1}$ equipped with the Higgs field induced
by $\varphi$ is polystable for each $i\, \in\, [1 \,, \ell]$, and

\item $\mu_g(E_i/E_{i-1})\,=\, \mu_g(E)$ for each $i\, \in\, [1 \,, \ell]$.
\end{itemize}

We have shown that the inequality in the proposition holds for each
$E_i/E_{i-1}$, $i\, \in\, [1 \,, \ell]$. Therefore, the inequality
holds for $E$.
\end{proof}

For a semistable flat Higgs $G$--bundle $(E_G \, , \varphi)$ over $M$,
the adjoint vector bundle $\text{ad}(E_G)$ equipped with the Higgs
field induced by $\varphi$ is also semistable. Therefore, Proposition
\ref{prop-Bo} gives a similar inequality for semistable flat Higgs
$G$--bundles.


\begin{thebibliography}{Do87b}

\bibitem[BL11]{BL} {\scshape I.~Biswas and J.~Loftin}:
Hermitian--Einstein connections on principal bundles over flat
affine manifolds, \textit{Int.\ Jour.\ Math.\/}\ \textbf{23},
no.\ 4 (2012), {\ttfamily doi:10.1142/S0129167X12500395}.

\bibitem[Do85]{Do85} {\scshape S.~K.~Donaldson}:
Anti self--dual Yang--Mills connections over
complex algebraic surfaces and stable vector bundles,
\textit{Proc.\ London Math.\ Soc.\/}\ \textbf{50} (1985), 1--26.

\bibitem[Do87a]{Do87} {\scshape S.~K.~Donaldson}: Infinite
determinants, stable bundles and curvature, \textit{Duke Math.\
Jour.\/}\ \textbf{54} (1987), 231--247.

\bibitem[Do87b]{Do} {\scshape S.~K.~Donaldson}: Twisted harmonic
maps and self--duality equations, \textit{Proc.\ London Math.\ Soc.\/}\ \textbf{55} (1987), 127--131.

\bibitem[Hi87]{Hi} {\scshape N.~J.~Hitchin}: The
self--duality equations on a Riemann surface,
\textit{Proc.\ London Math.\ Soc.\/}\ \textbf{55} (1987), 59--126.

\bibitem[Ja11]{Ja11} {\scshape A.~Jacob}: Stable Higgs
bundles and Hermitian--Einstein metrics on non--K\"ahler
manifolds, {\ttfamily arXiv:1110.3768v1} [math.DG] (2011).

\bibitem[JY93]{JY} {\scshape J.~Jost and S.--T.~Yau}: A nonlinear elliptic
system for maps from Hermitian to Riemannian manifolds and
rigidity theorems in Hermitian geometry, \textit{Acta Math.\/}\
\textbf{170} (1993), 221--254.

\bibitem[LY87]{LY87} {\scshape J.~Li and S.--T.~Yau}:
Hermitian--Yang--Mills connection on non--K\"ahler manifolds,
\textit{Mathematical aspects of string theory}, Proc.\ Conf., San
Diego/Calif.\ 1986, 560--573, Adv.\ Ser.\ Math.\ Phys.\
{\bfseries 1} (1987).

\bibitem[LYZ]{LYZ} {\scshape J.~Li, S.--T.~Yau and F.~Zheng}: On
projectively flat Hermitian manifolds, \textit{Comm.\ Anal.\ Geom.\/}
\ \textbf{2} (1994), 103--109.

\bibitem[Lo09]{Lo09} {\scshape J.~Loftin}: Affine
Hermitian--Einstein metrics, \textit{Asian J.\ Math.\/}\ {\bfseries
13} (2009), 101--130.

\bibitem[LT95]{LT95} {\scshape M.~L\"ubke and A.~Teleman}: {\em
The Kobayashi--Hitchin correspondence}, Singapore: World
Scientific (1995).

\bibitem[Sc05]{Sc1} {\scshape L.~Sch\"afer}: $tt^\ast$--geometry and pluriharmonic
maps, \textit{Ann.\ Glob.\ Anal.\ Geom.\/}\ \textbf{28} (2005), 285--300.

\bibitem[Sc07]{Sc2} {\scshape L.~Sch\"afer}: A note on $tt^\ast$--bundles over
compact nearly K\"ahler manifolds, \textit{Geom.\ Dedicata\/} \textbf{128}
(2007), 107--112.

\bibitem[Si88]{Si88} {\scshape C.~T.~Simpson}: Constructing
variations of Hodge structure using Yang--Mills theory and
applications to uniformization, \textit{Jour.\ Amer.\ Math.\
Soc.\/}\ {\bfseries 1} (1988), 867--918.

\bibitem[Si92]{Si2} {\scshape C.~T.~Simpson}: Higgs bundles and
local systems, \textit{Inst.\ Hautes \'Etudes Sci.\ Publ.\ Math.\/}\ \textbf{75}
(1992), 5--95.

\bibitem[UY86]{UY86} {\scshape K.~Uhlenbeck and S.--T.~Yau}: On
the existence of Hermitian--Yang--Mills connections in stable
vector bundles, \textit{Commun.\ Pure Appl.\ Math.\/}\ \textbf{39}
(1986), 257--293.

\bibitem[UY89]{UY89} {\scshape K.~Uhlenbeck and S.--T.~Yau}: A
note on our previous paper: On the existence of Hermitian
Yang--Mills connections in stable vector bundles, \textit{Commun.\
Pure Appl.\ Math.\/}\ \textbf{42} (1989), 703--707.

\end{thebibliography}
\end{document}